\theoremstyle{definition} \newtheorem{remark}{Remark}[section]
\theoremstyle{plain}  \newtheorem{theorem}{Theorem}[section]
\theoremstyle{plain}  \newtheorem{lemma}{Lemma}[section]
\theoremstyle{plain}  \newtheorem{proposition}{Proposition}[section]
\theoremstyle{definition} \newtheorem{definition}{Definition}[section]
\theoremstyle{remark} \newtheorem{example}{Example}[section]
\numberwithin{equation}{section}
\def \MRA{multiresolution analysis}
\title{\textbf{Vanishing Moments of Wavelets on $ p $-adic Fields}}
\author{\textbf{Athira N$ ^{\ast} $ and Lineesh M C$^{\dagger} $}}
\date{}
\begin{document}

\maketitle
\begin{center}
\small{Department of Mathematics \\
National Institute of Technology Calicut \\
NIT Campus P O - 673 601, India \\
$ ^{\ast} $nathira.1996@gmail.com \\
$ ^{\dagger} $lineesh@nitc.ac.in}
\end{center}

\begin{abstract} A study on the vanishing moments of wavelets on $ p $-adic fields is carried out in this paper. The $ p $-vanishing moments and discrete $ p $-vanishing moments are defined on a $ p $-adic field and the relation between them is investigated. The $ p $-vanishing moments of Haar-type and non-Haar type wavelet functions are computed. Also, the connection between $ p $-vanishing moment of non-Haar type wavelet functions and the approximation order of the indicator function of the compact open subgroup $ B_{0}(0) $\, of $ \mathbb{Q}_{p} $\, are established. A characterization of the $ p $-vanishing moments of nonorthogonal wavelets is done.
\end{abstract}

\textbf{keywords:} $ p $-adic field; pseudo-differential operator; refinable function; p-adic wavelets; approximation order; vanishing moment; compact support. \\
\textbf{AMS Subject Classification 2020:} 11E95, 11F85, 41A10, 42C40, 43A70.

\section{Introduction}
Over the past few years, wavelets have gained immense popularity in almost all fields of science and technology due to their unique time-frequency localization feature. One of the most significant facts is that, in addition to the canonical tool of representing a function by its Fourier series, there is a different representation using wavelets which is more suitable to certain problems in data compression and signal processing. The first wavelet construction is due to Alfr\'{e}d Haar in 1910. Then many mathematicians including Jean Morlet, Alex Grossmann, Yves Meyer, Stephane Mallat and Ingrid Daubechies have contributed various kinds of wavelets to theoretical and applied science. The concept of wavelet is then extended to the Euclidean space as well as many other topological spaces such as $ p $-adic fields. \par 
Wavelets can be generated from scaling functions as well as wavelet sets. In their paper, Albeverio \textit{et al.}\cite{aes1,aes2} proposed a complete characterization of scaling functions and explained what types of scaling functions form $ p $-adic multiresolution analysis. Khrennikov \textit{et al.} \cite{kss2} described the procedure to construct $ p $-adic wavelets from $ p $-adic scaling functions associated with an expansive automorphism. The authors \cite{kss1} discussed about all compactly supported orthogonal wavelet bases for $ L_{2}(\mathbb{Q}_{p}) $\, generated by the unique p-adic multiresolution analysis, i.e.,the Haar bases of $ L_{2}(\mathbb{Q}_{p}) $. \par 
In 2008, Shelkovich and Skopina \cite{ss} constructed infinitely many different multidimensional $ p $-adic Haar orthonormal wavelet bases for $ L_{2}(\mathbb{Q}_{p}^{n}) $\, and in 2009, Khrennikov and Shelkovich \cite{ks} developed infinite family of compactly supported non-Haar type $ p $-adic wavelet bases for $ L_{2}(\mathbb{Q}_{p}^{n}),\, n\geq 1 $. Kozyrev \textit{et al.} \cite{kks} also discussed about the one-dimensional and multi-dimensional wavelet bases and their relation to the spectral theory of pseudo-differential operators. \par
Reﬁnable functions play an important role in the construction and properties of wavelets. Basically, most of the wavelets are generated from reﬁnable functions. In their paper Athira and Lineesh \cite{al} discussed about the approximation order of shift-invariant space of a reﬁnable function on p-adic ﬁeld. The relation between the approximation order, accuracy of refinable function and order of the Strang–Fix condition are also established. \par  
In order to fulfill the requirements, some properties are relevant for the wavelet bases. One of the important property among them is the vanishing moment. Vanishing moments are essential in the context of compression of a signal. A vanishing moment limits the wavelet's ability to represent polynomial behaviour or information in a signal. Higher vanishing moments of wavelets are required for signal compression and denoising. In 1999, S. Mallat \cite{ms} established that the number of vanishing moments of a wavelet and the approximation order of the corresponding scaling function are equivalent in $ L_{2}(\mathbb{R}) $. Then Di-Rong Chen \textit{et al.} \cite{chr} obtained the the relation between the order of sum rules and the number of vanishing moments of wavelets on $ L_{2}(\mathbb{R}^{n}) $. An explicit formula for refinement masks providing vanishing moments is explored by Skopina\cite{sm}. In 2010, Yu Liu and L. Peng \cite{lp} proved the connection between discrete vanishing moments and sum rules on the Heisenberg group. \par 

Our goal is to extend the concept of vanishing moments to the $ p $-adic field $ \mathbb{Q}_{p} $. Section 2 contains preliminary informations about scaling functions, wavelet functions and accuracy of scaling functions on $ \mathbb{Q}_{p} $. In section 3, published results about the relationship between accuracy and vanishing moments on Euclidean spaces are discussed. The definitions of $ p $-vanishing moments of compactly supported functions on $ \mathbb{Q}_{p} $\, and discrete $ p $-vanishing moments of finitely supported sequences on $ I_{p} $\, are given in section 4. The relationship between the $ p $-vanishing moments and discrete $ p $-vanishing moments are also established in this section. In section 5, the $ p $-vanishing moments of Haar-type wavelet functions are calculated. The $ p $-vanishing moments of non-Haar type wavelet functions are computed in section 6. In this section, we proved the connection between $ p $-vanishing moment of non-Haar type wavelet functions and the approximation order of the indicator function of the compact open subgroup $ B_{0}(0) $\, of $ \mathbb{Q}_{p} $. Finally in section 7, we characterized the $ p $-vanishing moments of nonorthogonal wavelets.

\section{Preliminary}

Let $ p $\, be a prime number. Consider the completion field of $ \mathbb{Q} $\, with respect to the norm  $ \lvert \cdot  \rvert_{p} $\, defined by,
$$ \lvert x \rvert_{p} = \begin{cases}
                          0 & ;\, x = 0 \\
                          p^{-\gamma} & ;\,  x \neq 0, x= (p^{\gamma})\frac{m}{n},
                         \end{cases}
 $$
 where $ \gamma = \gamma(x) \in \mathbb{Z}, \, m,n \in \mathbb{Z} $\, not divisible by $ p $. Denote the above field as 
 $ G = \mathbb{Q}_{p} $. The canonical form of $ x \in \mathbb{Q}_{p}$, $ x \neq 0 $\, is, 
 \begin{equation}\label{eq1.1}
  x=p^{\gamma}(x_{0}+x_{1}p+x_{2}p^{2}+\cdots ),
 \end{equation}
where $ \gamma \in \mathbb{Z}, \, x_{j} \in \{ 0,1,\ldots, p-1\}, \, x_{0} \neq 0 $. Then for this $ x \in \mathbb{Q}_{p} $, 
the fractional part of $ x $\, is, 
$$ \{ x \}_{p} =\begin{cases}
		  0 & ;\, \gamma(x) \geq 0 \text { or } x = 0 \\
		  p^{\gamma}(x_{0}+x_{1}p+\cdots + x_{-\gamma-1}p^{-\gamma-1}) & ;\, \gamma(x) <0.
		\end{cases}
 $$ 
 \par 
 The dual group of $ \mathbb{Q}_{p} $\, is $ \mathbb{Q}_{p} $\, itself and the character on $ \mathbb{Q}_{p} $\, is defined as,
 \begin{equation}\label{eq1.5}
  \chi(x, \xi) = e^{2\pi i \{x\xi\}_{p}},
 \end{equation}
  where $ \{\cdot \}_{p} $\, is the fractional part of a number. Denote 
 $$ B_{\gamma}(a) = \{ x \in \mathbb{Q}_{p} : \lvert x-a \rvert_{p} \leq p^{\gamma} \}. $$ Then $ B_{0}(0) $\, is a compact 
 open subgroup of $ \mathbb{Q}_{p} $. Let $ \mu $\, be the Haar measure on $ \mathbb{Q}_{p} $\, with $ \mu(B_{0}(0)) = 1 $. 
 Denote $ d\mu(x) $\, by $ dx $. Let $ L_{q}(\mathbb{Q}_{p}) $\, be the collection of all integrable functions 
 $ f: \mathbb{Q}_{p} \rightarrow \mathbb{C} $\, such that $ \int_{\mathbb{Q}_{p}} \lvert f(x)\rvert^{q} dx < \infty $. 
 The Fourier transform of a complex-valued function $ f $\, defined on $ \mathbb{Q}_{p} $\, is defined as 
 $$ \widehat{f}(\xi) := \int_{\mathbb{Q}_{p}} f(x) \chi(x, \xi) dx. $$ \par 
 If $ E $\, is a measurable subset of $ \mathbb{Q}_{p} $\, and $ 1 \leq q< \infty $, then $ \lVert f \rVert_{q}(E) $\, denotes 
 the quantity $ (\int_{E} \lvert f(x) \rvert^{q} dx )^{1/q} $. If $ q = \infty $, then $ \lVert f \rVert_{\infty}(E) $\, 
 denotes the essential supremum of $ f $\, over $ E $. \par 
 For the $ p $-adic analysis related to the mapping $ \mathbb{Q}_{p} \rightarrow \mathbb{C} $, the operation of
differentiation is not defined. An analogy of the differentiation operator is a
pseudo-differential operator. The pseudo-differential operator $ D^{\alpha} : \phi \rightarrow D^{\alpha}\phi $\, is defined 
\cite{vvz} by
\begin{equation}\label{eq1.2}
 D^{\alpha}\phi = f_{-\alpha} * \phi 
\end{equation}
where $ f_{\alpha}(x) = \frac{\lvert x \rvert_{p}^{\alpha -1}}{\Gamma_{p}(\alpha)} $\, with $ \Gamma_{p}(\alpha) = 
\frac{1-p^{\alpha-1}}{1-p^{-\alpha}} $.
\begin{remark}\cite{vvz}
 The derivative $ D^{\alpha}\phi $, $\alpha >0 $\, is given by the expression 
 \begin{equation}\label{eq1.15}
  (D^{\alpha}\phi)(x) = \int_{\mathbb{Q}_{p}} \lvert \xi \rvert_{p} \widehat{\phi}(\xi) \chi(x,-\xi) d\xi.
 \end{equation}
\end{remark}
\begin{remark}\cite{vvz}
  For $ \alpha \in \mathbb{R} $\, and $ a \in \mathbb{Q}_{p} \setminus \{0\} $, $ D^{\alpha}\chi(a,x) = \lvert a 
  \rvert_{p}^{\alpha} \chi(a,x) $.
 \end{remark}
 \begin{remark}\cite{vvz}
 For $ \alpha \in \mathbb{R}, \, \gamma \in \mathbb{Z} $, let $ \Phi(x) = \delta(\lvert \xi \rvert_{p} - 
  p^{\gamma})f(\xi) $. Then $$ D^{\alpha}(\widehat{\Phi(x)}) = p^{\gamma \alpha} \widehat{\Phi(x)}. $$
\end{remark}
A polynomial $ r(x) $\, on $ \mathbb{Q}_{p} $\, is given by
$$ r(x) = c_{0} + c_{1} x+ \cdots + c_{n}x^{n}, \quad x \in \mathbb{Q}_{p}, $$
where each $ c_{j} \in \mathbb{Q}_{p} $. If $ c_{n} \neq 0 $, then the degree of $ r(x) $\, is $ n $. For a nonnegative integer $ k $, we denote by 
$ \mathscr{P}_{k} $\, the linear span of $ \{r(x) : \text{degree of } r(x) \leq k \} $. Then $ \mathscr{P} = \bigcup_{k=0}^{\infty} 
\mathscr{P}_{k} $\, is the linear space of all polynomials. \par 
Let us consider the set 
$$ I_{p} = \{ a = p^{-\gamma}(a_{0}+a_{1}p+ \cdots + a_{\gamma-1}p^{\gamma-1}) : \gamma \in \mathbb{N},\, a_{j} \in 
\{ 0,1,\ldots, p-1 \} \}. $$
Then there is a ``natural'' decomposition of $ \mathbb{Q}_{p} $\, into a union of mutually disjoint discs:
$ \mathbb{Q}_{p} = \bigcup_{a \in I_{p}} B_{0}(a) $. So, $ I_{p} $\, is a ``natural set'' of shifts for $ \mathbb{Q}_{p} $. \par 
We denote by $ l(I_{p}) $\, the linear space of all sequences on $ I_{p} $, and by $ l_{0}(I_{p}) $\, the linear space of all 
finitely supported sequences on $ I_{p} $. \par 
For a compactly supported function $ \phi $\, on $ \mathbb{Q}_{p} $\, and a sequence $ b \in l(I_{p}) $, the semi-convolution 
of $ \phi $\, with $ b $\, is defined by
\begin{equation}\label{eq1.3}
 \phi *' b(x) := \sum_{\alpha \in I_{p}} \phi(x - \alpha) b(\alpha).
\end{equation}
Let $ S(\phi) $\, denote the linear space $ \{\phi *'b : b \in l(I_{p}) \} $. We call $ S(\phi) $\, the shift-invariant space 
generated by $ b $. \par 
Define $ A:\mathbb{Q}_{p} \rightarrow \mathbb{Q}_{p} $\, by $ A(x) = \frac{1}{p}x $. Then, $ A $\, is an expansive 
automorphism with modulus, $ \lvert A \rvert = \mu(A(B_{0}(0))) = \mu(B_{1}(0)) = p $\, and $ A^{*} = A $.
\begin{definition}\cite{aes2} 
   A collection of closed spaces $ V_{j} \subset L_{2}(\mathbb{Q}_{p}), \, j \in \mathbb{Z} $, is called a \MRA (MRA) in 
   $ L_{2}(\mathbb{Q}_{p}) $\, if the following axioms hold: 
   \begin{enumerate}
    \item $ V_{j} \subset V_{j+1} $\, for all $ j \in \mathbb{Z} $;
    \item $ \bigcup_{j=-\infty}^{\infty} V_{j} $\, is dense in $ L_{2}(\mathbb{Q}_{p}) $;
    \item $ \bigcap_{j=-\infty}^{\infty} V_{j} = \{ 0 \} $;
    \item $ f(\cdot) \in V_{j} \Leftrightarrow f(A\cdot) \in V_{j+1} $\, for all $ j \in \mathbb{Z} $;
    \item there exists a function $ \phi \in V_{0} $\, such that $ \{\phi(\cdot -a),\, a \in I_{p}\} $\, is an orthonormal 
    basis for $ V_{0} $.
   \end{enumerate} 
  \end{definition}
 The function $ \phi $\, from axiom $ (5) $\, is called scaling function. Then we says that a MRA is generated by its scaling function $ \phi $\, (or $ \phi $\, generates the MRA). It follows immediately from axioms $ (4) $\, and $ (5) $\, that the functions $ p^{j/2}\phi(p^{-j}\cdot -a),\, a \in I_{p} $, form an orthonormal basis for $ V_{j}, \, j \in \mathbb{Z} $. Let $ \phi $\, be an orthogonal scaling function for a MRA $ \{ V_{j}\}_{j \in \mathbb{Z}} $, then 
\begin{equation}\label{eq1.4}
 \phi(x) = \sum_{a \in I_{p}} \alpha(a) \phi(p^{-1}x -a), \quad \alpha(a) \in \mathbb{C}.
\end{equation}
Such equations are called refinement equations, and their solutions are called refinable functions. \par 
Generally, a refinement equation \eqref{eq1.4} does not imply the inclusion property $ V_{0} \subset V_{1} $\, because the set 
of shifts $ I_{p} $\, does not form a group. Indeed, we need all the functions $ \phi(\cdot - b), b \in I_{p} $, to belong to 
the space $ V_{1} $, i.e., the identities $ \phi(x - b) = \sum_{a\in I_{p}} \alpha(a,b) \phi(p^{-1}x - a) $\, should be 
fulfilled for all $ b \in I_{p} $. Since $ p^{-1} b + a $\, is not in $ I_{p} $\, in general, we cannot argue that 
$ \phi(x - b) $\, belongs to $ V_{1} $\, for all $ b \in I_{p} $. Thus the refinement equation must be redefined as in the following definition.
\begin{definition}\cite{aes1,aes2}
 If $ \phi \in L_{2}(\mathbb{Q}_{p}) $\, is a refinable function and $ \text{supp}(\phi) \subset B_{N}(0) $, $ N \geq 0 $, then its 
 refinement equation is
 \begin{equation}\label{eq1.6}
  \phi(x) = \sum_{k=0}^{p^{N+1}-1} h\left(\frac{k}{p^{N+1}}\right) \phi \left( \frac{x}{p}-\frac{k}{p^{N+1}}\right), \quad \forall x \in \mathbb{Q}_{p},
 \end{equation}
with $ h \in l_{0}(I_{p}) $\, such that $ h(a) = 0 $\, for all $ a \in I_{p} \setminus \{k/p^{N+1}: k=0,1,\ldots, p^{N+1}-1 \} $ and 
\begin{equation}\label{eq1.7}
 \sum_{k=0}^{p^{N+1}-1} h\left(\frac{k}{p^{N+1}}\right) = p.
\end{equation}
\end{definition}
Taking Fourier transform on both sides of \eqref{eq1.6}, we obtain
\begin{equation}\label{eq1.8}
 \widehat{\phi}(\xi) = H(A^{-1}\xi)\widehat{\phi}(A^{-1}\xi), \quad \xi \in \mathbb{Q}_{p},
\end{equation}
where
\begin{equation}\label{eq1.9}
 H(\xi) = \frac{1}{p}\sum_{k=0}^{p^{N+1}-1} h\left(\frac{k}{p^{N+1}}\right)\chi\left(\frac{k}{p^{N+1}}, \xi\right), \quad \xi \in \mathbb{Q}_{p}.
\end{equation}

Denote by $ \mathscr{D}_{N}^{M} $\, the set of all $ p^{M} $-periodic functions supported on $ B_{N}(0) $. Then we have the following results about refinable functions \cite{aes2}.
\begin{theorem}\label{th1.1}
 A function $ \phi \in \mathscr{D}_{N}^{M}, \, M,N \geq 0 $, with $ \widehat{\phi}(0) \neq 0 $\, generates a MRA if and only if
 \begin{enumerate}
  \item $ \phi $\, is refinable;
  \item there exists at most $ p^{N} $\, integers $ l $\, such that $ 0 \leq l < p^{M+N} $\, and 
        $ \widehat{\phi}(\frac{l}{p^{M}}) \neq 0 $.
 \end{enumerate}
\end{theorem}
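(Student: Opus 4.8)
The plan is to transport every \MRA{} axiom to the Fourier side, where the hypothesis $\phi\in\mathscr{D}_N^M$ makes all relevant data finite. The support and periodicity assumptions on $\phi$ force $\widehat{\phi}$ to be supported on $B_M(0)$ and constant on the cosets of $B_{-N}(0)$, so that $\widehat{\phi}$ is completely determined by the $p^{M+N}$ numbers $\widehat{\phi}(l/p^M)$, $0\le l<p^{M+N}$. Writing $\chi(k/p^N,l/p^M)=e^{2\pi i kl/p^{M+N}}$ turns inner products of translates into finite Fourier sums of the nonnegative numbers $w_l:=|\widehat{\phi}(l/p^M)|^2$, and I would prove the two implications by reading each axiom through this finite dictionary.

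For necessity, suppose $\phi$ generates a \MRA{}. Axioms (4) and (5) place $\phi\in V_0\subset V_1$; expanding $\phi$ in the orthonormal basis $\{p^{1/2}\phi(p^{-1}\cdot-a)\}_{a\in I_p}$ of $V_1$ and truncating to a finite sum by compact support gives, after normalising via \eqref{eq1.7}, the refinement equation \eqref{eq1.6}, which is (1). For (2), observe that among the translates $\phi(\cdot-a)$, $a\in I_p$, exactly the $p^N$ with $a\in I_p\cap B_N(0)$ are supported in $B_N(0)$, while the rest have disjoint support; hence these $p^N$ orthonormal functions form a basis of $L_{2}(B_N(0))\cap V_0$, which is therefore $p^N$-dimensional. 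On the Fourier side a function of $V_0$ supported in $B_N(0)$ is $\widehat{\phi}$ times a periodic factor, restricted to the cosets where $\widehat{\phi}\neq0$, so the dimension of $L_{2}(B_N(0))\cap V_0$ equals the number of nonzero values $\widehat{\phi}(l/p^M)$. Equating the two counts yields exactly $p^N$ nonzero values, giving (2).

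For sufficiency I would set $V_j:=\overline{\operatorname{span}}\{p^{j/2}\phi(p^{-j}\cdot-a):a\in I_p\}$ and verify axioms (1)--(5). The nesting is the first delicate point: \eqref{eq1.6} only places $\phi$ itself in $V_1$, and because $I_p$ is not a group one must check separately that every shift $\phi(\cdot-b)$, $b\in I_p$, lies in $V_1$, which I would extract from the special arrangement of the shifts $k/p^{N+1}$ in the mask of \eqref{eq1.6}. Orthonormality (axiom (5)) is obtained by running the Gram-matrix computation of the previous step in reverse: refinability, expressed through the infinite product $\widehat{\phi}(\xi)=\prod_{j\ge1}H(A^{-j}\xi)$ coming from iterating \eqref{eq1.8}, together with condition (2) and $\widehat{\phi}(0)\neq0$, should pin down both the moduli of the surviving values $\widehat{\phi}(l/p^M)$ and the positions of the nonzero cosets, so that the finite Fourier sum collapses to the identity Gram matrix. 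Density $\overline{\bigcup_j V_j}=L_{2}(\mathbb{Q}_p)$ and triviality $\bigcap_j V_j=\{0\}$ then follow from $\widehat{\phi}(0)\neq0$ by the usual dominated-convergence and limiting arguments applied to that product.

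The main obstacle I anticipate lies in the sufficiency half, precisely in upgrading the cardinality bound (2) to genuine orthonormality. The diagonal Gram entries only force at most $p^N$ nonzero values to be exactly $p^N$, but the off-diagonal entries vanish only if the nonzero cosets occupy a sufficiently structured set, so the argument must show that refinability constrains not merely how many of the $\widehat{\phi}(l/p^M)$ survive but also which ones, making $\widehat{\phi}$ essentially a unimodular multiple of an indicator adapted to the $A$-action. The secondary difficulty is the nesting check, where the failure of $I_p$ to be a group means $V_0\subset V_1$ does not follow formally from \eqref{eq1.6} and must be read off from the explicit mask structure; both points are where the $p$-adic geometry departs from the classical real-variable theory.
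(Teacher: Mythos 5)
The first thing to note is that the paper itself contains no proof of Theorem \ref{th1.1}: it is quoted from \cite{aes2}, so your attempt can only be judged against the Albeverio--Evdokimov--Skopina argument and on its own merits. Judged that way, it has a fatal gap in the necessity direction, rooted in how you read ``generates a MRA''. You assume throughout that the shifts $\{\phi(\cdot-a)\}_{a\in I_p}$ of $\phi$ \emph{itself} are orthonormal: you use this to expand $\phi$ in the ``orthonormal basis'' $\{p^{1/2}\phi(p^{-1}\cdot-a)\}_{a\in I_p}$ of $V_1$, and again to conclude $\dim\bigl(V_0\cap L_2(B_N(0))\bigr)=p^N$. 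But axiom (5) only requires that \emph{some} function in $V_0$ have orthonormal shifts, and the theorem is only true under that weaker reading: Section 7 of this very paper (Theorem \ref{th4.6.1} with Lemma \ref{lm4.6.1}) exhibits refinable, \emph{nonorthogonal} $\phi_{M,N}\in\mathscr{D}_N^M$ which generate MRAs and for which the number of nonzero values $\widehat{\phi}_{M,N}(l/p^{M})$, $0\le l<p^{M+N}$, equals $1+(M+N)(p-1)$; this is strictly smaller than $p^{N}$ whenever $M<\frac{p^{N}-1}{p-1}-N$ (e.g.\ $(M,N)=(0,2)$). So your conclusion ``exactly $p^{N}$ nonzero values'' is false, not merely unproved, and these same examples show that your sufficiency plan---forcing the Gram matrix of the shifts of $\phi$ to ``collapse to the identity''---is unattainable, since conditions (1)--(2) hold for these $\phi$ yet their shifts are not orthonormal; what must be built is a \emph{different} orthogonal generator of $V_0$. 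Moreover, even if one grants orthonormal shifts of $\phi$, your key dimension count runs in the wrong direction: writing $S=\{l:\widehat{\phi}(l/p^{M})\neq0\}$, the Fourier-side description only gives that the transform of every $f\in V_0\cap L_2(B_N(0))$ lies in the $\#S$-dimensional space of functions supported on the cosets $l/p^{M}+B_{-N}(0)$, $l\in S$, hence $p^{N}\le\#S$---a \emph{lower} bound. The equality $\dim=\#S$ you assert is exactly the surjectivity of the coefficient-to-values map, which can hold only if $\#S\le p^{N}$, i.e.\ it presupposes the statement to be proved.

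The sufficiency half is, by your own account, a list of obstacles rather than a proof. The two points you flag---that $V_0\subset V_1$ does not follow formally from \eqref{eq1.6} because $I_p$ is not a group (the shifted refinement equation produces terms $\phi(x/p-c-n)$ with $c\in I_p$ and $n$ a positive integer, which are not visibly in $V_1$), and that axiom (5) must somehow be extracted from the cardinality bound (2)---are precisely where the substance of the proof in \cite{aes2} lies, and phrases like ``which I would extract from the special arrangement of the shifts'' and ``should pin down both the moduli \dots and the positions'' leave them unresolved. Density and triviality of the intersection are indeed the routine parts. As it stands, neither implication of the theorem is established by your outline, and the necessity argument would need to be rebuilt from scratch so that it uses only the existence of an orthonormal system somewhere in $V_0$, not orthonormality of $\phi$'s own shifts.
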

\begin{theorem}\label{th1.2}
 Let $ \widehat{\phi} $\, be defined by \eqref{eq1.8}, where $ H $\, is the trigonometric polynomial \eqref{eq1.9} with 
 $ H(0) = 1 $. If $ H(k) = 0 $\, for all $ k = 1,\ldots,p^{N+1}-1 $\, not divisible by $ p $, then $ \phi \in \mathscr{D}_{N}^{0} $. 
 If furthermore, $ \lvert H(k) \rvert = 1 $\, for all $ k = 1,\ldots,p^{N+1}-1 $\, divisible by $ p $, then 
 $ \{\phi(x -a),\, a \in I_{p}\} $\, is an orthonormal system. Conversely, if $ \text{supp }\widehat{\phi} \subset B_{0}(0) $\, 
 and the system $ \{\phi(x -a),\, a \in I_{p}\} $\, is orthonormal, then $ \lvert H(k) \rvert = 0 $\, whenever $ k $\, is 
 not divisible by $ p $, $ \lvert H(k) \rvert = 1 $\, whenever $ k $\, is divisible by $ p $, $ k = 1,\ldots,p^{N+1}-1 $, and 
 $ \lvert \widehat{\phi}(x) \rvert = 1 $\, for any $ x \in B_{0}(0) $.
\end{theorem}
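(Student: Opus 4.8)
The plan is to push everything through the Fourier transform and the refinement identity \eqref{eq1.8}, which reads $\widehat\phi(\xi)=H(A^{-1}\xi)\,\widehat\phi(A^{-1}\xi)$ with $A^{-1}\xi=p\xi$. The key translation is that $\phi\in\mathscr{D}_N^{0}$ is equivalent, under the Fourier transform on $\mathbb{Q}_p$, to the two conditions ``$\text{supp}\,\widehat\phi\subset B_0(0)$'' (periodicity of $\phi$ dualizes to compact support of $\widehat\phi$, $B_0(0)$ being self-annihilating) and ``$\widehat\phi$ is constant on the cosets of $B_{-N}(0)$'' (support of $\phi$ in $B_N(0)$ dualizes to local constancy of radius $p^{-N}$). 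I would first record two elementary facts about the trigonometric polynomial \eqref{eq1.9}: evaluated at an integer $m$ it is the scaled discrete Fourier transform $H(m)=\tfrac1p\sum_k h(k/p^{N+1})e^{2\pi i km/p^{N+1}}$ of the mask, and as a function on $\mathbb{Q}_p$ it is constant on the cosets of $B_{-(N+1)}(0)$, since $\chi(k/p^{N+1},t)=1$ whenever $|t|_p\le p^{-(N+1)}$. Consequently, for $\eta\in B_0(0)$ the value $H(\eta)$ equals $H$ of the residue of $\eta$ modulo $B_{-(N+1)}(0)$, so the hypotheses on $H(k)$, $k=1,\dots,p^{N+1}-1$, control $H$ on all of $B_0(0)$.

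For the first assertion I would iterate \eqref{eq1.8}: for $\xi$ with $|\xi|_p=p^{m}$, $m\ge1$, one gets $\widehat\phi(\xi)=\big(\prod_{j=1}^{m}H(p^{j}\xi)\big)\widehat\phi(p^{m}\xi)$, and the factor $H(p^{m}\xi)$ has argument $p^{m}\xi$ of norm $1$, i.e.\ a unit whose residue modulo $B_{-(N+1)}(0)$ is not divisible by $p$; by hypothesis $H$ vanishes there, so $\widehat\phi(\xi)=0$. This gives $\text{supp}\,\widehat\phi\subset B_0(0)$. On $B_0(0)$ the product terminates, since for $j\ge N+1$ the argument $p^{j}\xi$ lies in $B_{-(N+1)}(0)$ and hence $H(p^{j}\xi)=H(0)=1$; thus $\widehat\phi(\xi)=\prod_{j=1}^{N}H(p^{j}\xi)$ for $\xi\in B_0(0)$. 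Each factor $H(p^{j}\xi)$ is constant on the cosets of $B_{-(N+1-j)}(0)\supseteq B_{-N}(0)$, so the finite product is constant on cosets of $B_{-N}(0)$. Together with the support statement this is exactly $\phi\in\mathscr{D}_N^{0}$.

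The remaining parts rest on a single characterization: once $\text{supp}\,\widehat\phi\subset B_0(0)$, the system $\{\phi(\cdot-a):a\in I_p\}$ is orthonormal if and only if $|\widehat\phi(\xi)|=1$ a.e.\ on $B_0(0)$. I would establish it by computing, via Parseval, $\langle\phi(\cdot-a),\phi(\cdot-b)\rangle=\int_{B_0(0)}|\widehat\phi(\xi)|^{2}\chi(a-b,\xi)\,d\xi$ and noting that $\{\chi(a,\cdot)|_{B_0(0)}:a\in I_p\}$ is precisely the character orthonormal basis of $L_2(B_0(0))$ (as $\widehat{B_0(0)}\cong\mathbb{Q}_p/B_0(0)\cong I_p$); orthonormality therefore says all Fourier coefficients of $|\widehat\phi|^{2}$ are $\delta_{a,0}$, i.e.\ $|\widehat\phi|^{2}\equiv1$. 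For the second assertion I use the finite product of the first part: for $\xi\in B_0(0)$ each $p^{j}\xi$ ($1\le j\le N$) lies in $B_{-1}(0)$, so its residue is divisible by $p$, and the extra hypothesis $|H(k)|=1$ together with $H(0)=1$ forces $|H(p^{j}\xi)|=1$; hence $|\widehat\phi(\xi)|=1$ on $B_0(0)$ and the system is orthonormal. For the converse, orthonormality plus $\text{supp}\,\widehat\phi\subset B_0(0)$ yields $|\widehat\phi|=1$ a.e.\ on $B_0(0)$, which is the third conclusion. Rewriting \eqref{eq1.8} as $\widehat\phi(p^{-1}\xi)=H(\xi)\widehat\phi(\xi)$: for a unit $\xi$ the left side vanishes (its argument leaves $B_0(0)$) while $|\widehat\phi(\xi)|=1$, so $H(\xi)=0$; by local constancy of $H$ this holds on all units, in particular $H(k)=0$ for $k$ not divisible by $p$. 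For $\xi\in B_0(0)$ both $\widehat\phi(\xi)$ and $\widehat\phi(p\xi)$ have modulus $1$, whence $|H(p\xi)|=1$; as $p\xi$ ranges over $B_{-1}(0)$ and $H$ is constant on cosets of $B_{-(N+1)}(0)$, this gives $|H(k)|=1$ for every $k$ divisible by $p$.

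I expect the main obstacle to be the bookkeeping in the first assertion: tracking precisely how the two kinds of mask conditions propagate through the iterated refinement relation via the coset structure of $H$, namely the dichotomy ``the argument $p^{j}\xi$ becomes a unit, so $H=0$'' versus ``the argument is $p$-divisible, so $|H|=1$,'' and then verifying that the local-constancy radius of the terminating product is exactly $p^{-N}$ so that membership comes out with the correct index $\mathscr{D}_N^{0}$. The analytic inputs (the infinite-product/fixed-point interpretation of \eqref{eq1.8} and the passage from ``a.e.'' to ``everywhere'' using local constancy of $H$) are routine once this combinatorial skeleton is in place.
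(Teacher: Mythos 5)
The paper contains no proof of this statement: Theorem~\ref{th1.2} is quoted in the preliminaries as a known result from \cite{aes2}, so there is no internal argument to compare yours against. Judged on its own merits, your proof is correct, and it is essentially the standard argument from that literature. The points that needed verification all hold: $H$ in \eqref{eq1.9} is constant on cosets of $B_{-(N+1)}(0)$ (since $\lvert k\,p^{-(N+1)}u\rvert_{p}\le 1$ when $\lvert u\rvert_{p}\le p^{-(N+1)}$), so its finitely many values at $k=0,1,\ldots,p^{N+1}-1$ control it on all of $B_{0}(0)$, with units reducing to $k$ not divisible by $p$ and points of $B_{-1}(0)$ reducing to $k$ divisible by $p$; the product $\prod_{j\ge 1}H(p^{j}\xi)$ defining $\widehat{\phi}$ is locally finite, terminates after the $N$-th factor on $B_{0}(0)$, and acquires a vanishing factor as soon as $\lvert\xi\rvert_{p}\ge p$, which yields both $\text{supp}\,\widehat{\phi}\subset B_{0}(0)$ and constancy on cosets of $B_{-N}(0)$, i.e.\ $\phi\in\mathscr{D}_{N}^{0}$; and, given that support, Plancherel together with the fact that $\{\chi(a,\cdot)\vert_{B_{0}(0)}:a\in I_{p}\}$ is the character orthonormal basis of $L_{2}(B_{0}(0))$ gives your criterion that shift-orthonormality is equivalent to $\lvert\widehat{\phi}\rvert=1$ a.e.\ on $B_{0}(0)$. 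Your converse is also sound: local constancy of $\widehat{\phi}\vert_{B_{0}(0)}$ (a consequence of the finite product, independent of the vanishing hypotheses) upgrades the a.e.\ statement to the pointwise third conclusion; evaluating $\widehat{\phi}(p^{-1}\xi)=H(\xi)\widehat{\phi}(\xi)$ at units then forces $H=0$ there, and comparing moduli of $\widehat{\phi}(\xi)$ and $\widehat{\phi}(p\xi)$ gives $\lvert H\rvert=1$ on $B_{-1}(0)$. One caveat is interpretive rather than mathematical: you read ``defined by \eqref{eq1.8}'' as the locally finite infinite product normalized by $H(0)=1$ (equivalently $\widehat{\phi}(0)=1$). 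That reading is what makes the local-constancy step in the converse available, and it is the standard one in \cite{aes2}, but it deserves an explicit sentence, since \eqref{eq1.8} taken merely as a functional equation does not single out $\widehat{\phi}$.
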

\begin{theorem}\label{th1.3}\cite{aes2}
 There exists a unique MRA generated by an orthogonal scaling function.
\end{theorem}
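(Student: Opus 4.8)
The plan is to show that every orthogonal scaling function produces one and the same family $\{V_j\}_{j\in\mathbb{Z}}$, namely the Haar MRA, so I would work on the Fourier side and aim to prove that any such $\phi$ satisfies $\lvert\widehat{\phi}\rvert = \mathbf{1}_{B_0(0)}$. Once this is established, $V_0 = \overline{S(\phi)}$ is forced to be the space of $L_2$-functions whose Fourier transform is supported in $B_0(0)$, independently of the particular $\phi$, and then axiom $(4)$ determines the whole ladder $\{V_j\}$. Existence is witnessed by the Haar function itself, so the content of the theorem is really the uniqueness, i.e.\ the above rigidity of $\widehat{\phi}$.

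First I would fix an orthogonal scaling function $\phi$. By axiom $(5)$ and the refinement equation \eqref{eq1.6}, $\phi$ is refinable; normalising, I may assume $\widehat{\phi}(0)=1$, which is permissible because a scaling function generating an MRA necessarily has $\widehat{\phi}(0)\neq 0$ (the standing hypothesis of Theorem \ref{th1.1}). Thus $\phi\in\mathscr{D}_N^M$ for some $M,N\geq 0$, and Theorem \ref{th1.1} applies: there are at most $p^{N}$ integers $l$ with $0\leq l < p^{M+N}$ and $\widehat{\phi}(l/p^{M})\neq 0$. In parallel I would translate the orthonormality of $\{\phi(\cdot-a):a\in I_p\}$ into the Fourier domain: using $\widehat{\phi(\cdot-a)}(\xi)=\chi(a,\xi)\widehat{\phi}(\xi)$ and Plancherel, orthonormality becomes a periodisation identity for $\lvert\widehat{\phi}\rvert^2$ over the cosets of $B_0(0)$, forcing the periodised mass to equal the constant $1$ almost everywhere.

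The decisive step -- and the one I expect to be the main obstacle -- is to rule out that the support of $\widehat{\phi}$ spills outside $B_0(0)$, that is, to prove $N=0$. Here the $p$-adic rigidity enters: since $B_0(0)$ is simultaneously compact, open, and a subgroup, $\widehat{\phi}$ is locally constant and determined by the finitely many samples $\widehat{\phi}(l/p^{M})$. The periodisation identity demands that on each period the squared samples sum to $1$, while Theorem \ref{th1.1} caps the number of nonzero samples at $p^{N}$. I would argue that these two constraints, run through the refinement relation \eqref{eq1.8}--\eqref{eq1.9} (which links the samples at scale $p^{M}$ to the coarser ones via the mask $H$) together with the MRA axioms $\bigcup V_j$ dense and $\bigcap V_j=\{0\}$, are compatible only when there is exactly one nonzero sample per period and $N=0$; any $N\geq 1$ would over-count the available orthonormality mass. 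This reduces us to $\text{supp}\,\widehat{\phi}\subset B_0(0)$.

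With $\text{supp}\,\widehat{\phi}\subset B_0(0)$ in hand, the converse part of Theorem \ref{th1.2} immediately gives $\lvert\widehat{\phi}(\xi)\rvert=1$ for every $\xi\in B_0(0)$, so $\lvert\widehat{\phi}\rvert=\mathbf{1}_{B_0(0)}$. It then remains to read off the phase: writing $\widehat{\phi}=\mathbf{1}_{B_0(0)}\cdot u$ with $\lvert u\rvert=1$ and substituting back into \eqref{eq1.8}, the mask constraints $H(0)=1$ and $\lvert H\rvert=1$ on the $p$-divisible residues (Theorem \ref{th1.2}) force $u$ to be a single character $\chi(a,\cdot)$ of $B_0(0)$, with $a\in I_p$. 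Hence $\phi(x)=\mathbf{1}_{B_0(0)}(x-a)$ is a translate of the Haar scaling function. Since this $a$ lies in $I_p$, the translate is already one of the generators of $S(\phi)$, so $V_0=\{f\in L_2(\mathbb{Q}_p):\text{supp}\,\widehat{f}\subset B_0(0)\}$ regardless of $\phi$, and by axiom $(4)$ the entire MRA coincides with the Haar MRA. This establishes uniqueness.
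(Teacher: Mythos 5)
A preliminary remark: the paper never proves Theorem \ref{th1.3} at all --- it is imported from \cite{aes2} --- so there is no internal proof to compare against, and your proposal has to stand on its own. Its overall shape is the right one (and is the strategy of \cite{aes2}): prove the Fourier rigidity $\lvert\widehat{\phi}\rvert = 1_{B_{0}(0)}$, conclude that $V_{0}=\{f\in L_{2}(\mathbb{Q}_{p}): \text{supp}\,\widehat{f}\subset B_{0}(0)\}$ independently of $\phi$, and let axiom (4) propagate this to the whole ladder. But two of your steps are genuine gaps. First, you pass from axiom (5) directly to ``$\phi$ is refinable, hence $\phi\in\mathscr{D}_{N}^{M}$ and Theorem \ref{th1.1} applies''. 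Axiom (5) only gives $\phi\in V_{0}\subset V_{1}$, i.e.\ an expansion $\phi(x)=\sum_{a\in I_{p}}c_{a}\,\phi(p^{-1}x-a)$ with square-summable and in general infinitely many coefficients; the finite refinement equation \eqref{eq1.6}, compact support, and the local constancy encoded in $\mathscr{D}_{N}^{M}$ are extra hypotheses defining the restricted class to which Theorems \ref{th1.1}--\ref{th1.2} apply, and nothing in your argument supplies them for an arbitrary orthogonal scaling function in $L_{2}(\mathbb{Q}_{p})$. Second, and decisively, the step you yourself flag as the main obstacle --- that $\widehat{\phi}$ carries no mass outside $B_{0}(0)$ --- is never argued: ``I would argue that these two constraints \ldots are compatible only when \ldots'' is a plan, not a proof (incidentally, in the paper's indexing that statement is $M=0$, not $N=0$; $N$ bounds $\text{supp}\,\phi$). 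Note that periodization of $\lvert\widehat{\phi}\rvert^{2}$ plus the sample count in Theorem \ref{th1.1} cannot suffice by themselves: they force exactly one unimodular sample per residue class, but that sample can a priori sit in a coset outside $B_{0}(0)$; excluding this requires the refinement relation and the density axiom in an essential way, and that is precisely the content of the theorem in \cite{aes2}. As written, the proposal assumes its crux.

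Your final step is moreover false, although harmlessly so. From $\widehat{\phi}=u\cdot 1_{B_{0}(0)}$ with $\lvert u\rvert=1$ you claim the mask constraints of Theorem \ref{th1.2} force $u=\chi(a,\cdot)$ for some $a\in I_{p}$, so that $\phi$ is a translate of $1_{B_{0}(0)}$. In fact every unimodular $u$ works: multiplication by $u$ is a unitary map of $L_{2}(B_{0}(0))$ onto itself, so it carries the orthonormal basis $\{\chi(a,\cdot)1_{B_{0}(0)}\}_{a\in I_{p}}$ to another orthonormal basis of the same space; hence the shifts of $\phi$ are orthonormal and span exactly the Haar $V_{0}$ whatever the phase $u$ is. The paper's own Example 4.1 (the $p=3$, $N=1$ example in Section 4) exhibits this: there $\widehat{\phi}$ equals $1$ on $B_{-1}(0)$ and $-1$ on $B_{0}(0)\setminus B_{-1}(0)$, the mask has $H(0)=1$, $H(3)=H(6)=-1$ and $H(k)=0$ otherwise --- so it meets every constraint in Theorem \ref{th1.2} --- yet $\phi$ is not a translate of $1_{B_{0}(0)}$, because $-1$ is not a value of any character $\chi(a,\cdot)$ (these take values in groups of $p^{\gamma}$-th roots of unity, and $p=3$ is odd). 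This multiplicity of scaling functions is exactly what the Remark following Theorem \ref{th1.3} records: the MRA is unique, the orthogonal scaling function is not. The error does not damage your conclusion, since once $\lvert\widehat{\phi}\rvert=1_{B_{0}(0)}$ is known $V_{0}$ depends on the modulus alone, so the phase-identification step should simply be deleted; but together with the two gaps above, the proposal falls short of a proof.
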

\begin{remark}
 That is all orthogonal scaling functions generate the same Haar MRA.
\end{remark}

Suppose we have a p-adic MRA generated using a scaling function $ \phi $\, satisfying the refinement equation \eqref{eq1.6}, then 
the wavelet functions $ \psi_{j}, \, j = 1,\ldots, p-1 $\, are in the form
\begin{equation}\label{eq1.10}
 \psi_{j}(x) = \sum_{k=0}^{p^{N+1}-1} h_{j}\left(\frac{k}{p^{N+1}}\right) \phi \left( \frac{x}{p}-\frac{k}{p^{N+1}}\right), \quad 
 \forall x \in \mathbb{Q}_{p},
\end{equation}
where the coefficients $ h_{j}(\frac{k}{p^{N+1}}) $\, are chosen such that
\begin{equation}\label{eq1.11}
 <\psi_{j},\phi(\cdot-a)> = 0, \, \, <\psi_{j}, \psi_{k}(\cdot - a)> = \delta_{j,k} \delta_{0,a}, \,\, j,k = 1,\ldots, p-1, 
\end{equation}
for any $ a \in I_{p} $. \newline 
Set $$ h = \frac{1}{\sqrt{p}}\left(h(0),\ldots,h\left(\frac{p^{N+1}-1}{p^{N+1}}\right)\right),$$ 
$$ h_{j} = \frac{1}{\sqrt{p}}\left(h_{j}(0),\ldots,h_{j}\left(\frac{p^{N+1}-1}{p^{N+1}}\right)\right),\, j = 1,\ldots, p-1, $$
and
$$ S= \begin{bmatrix}
      0 & 0 & \cdots & 0 & 1 \\
      1 & 0 & \cdots & 0 & 0 \\
      0 & 1 & \cdots & 0 & 0 \\
      \cdots & \cdots & \cdots & \cdots \\
      0 & 0 & \cdots & 1 & 0 
     \end{bmatrix}. $$ 
In order to satisfy \eqref{eq1.11}, we need to find $ h_{j}, \, j = 1, \ldots, p-1 $\, such that the matrix
\begin{equation}\label{eq1.12}
 U = (S^{0}h,\ldots,S^{p^{N}-1}h,S^{0}h_{1},\ldots,S^{p^{N}-1}h_{1}, \ldots, S^{0}h_{p-1},\ldots,S^{p^{N}-1}h_{p-1})
\end{equation}
is unitary.\cite{kss2} \par 
Let $ \phi $\, be a compactly supported function in $ L_{q}(\mathbb{Q}_{p}) (1 \leq q \leq \infty) $. The norm in 
$ L_{q}(\mathbb{Q}_{p}) $\, is denoted by $ \lVert \cdot \rVert_{q} $. For an element $ f \in L_{q}(\mathbb{Q}_{p}) $\, and 
a subset $ G $\, of $ L_{q}(\mathbb{Q}_{p}) $, the distance from $ f $\, to $ F $, denoted by $ \text{dist}_{q}(f, F) $, 
is defined by $$ \text{dist}_{q}(f,F) := \inf_{g \in F}\lVert f-g \rVert_{q}. $$ 
Let $ S:=S(\phi)\cap L_{q}(\mathbb{Q}_{p}) $. For $ n \in \mathbb{Z} $, let $ S^{n} := {g (A_{n}^{-1} \cdot) : g \in S} $, 
where $ A_{n}(x) = p^{n}x, \, \, x \in \mathbb{Q}_{p} $. For a real number $ k \geq 0 $, we say that $ S(\phi) $\, 
provides approximation order $ k $\, if for each sufficiently smooth function $ f \in L_{q}(\mathbb{Q}_{p}) $, there exists a 
constant $ C > 0 $\, such that $$ \text{dist}_{q}(f,S^{n}) \leq C p^{-nk}, \quad \forall n \in \mathbb{N}\cup\{0\}. $$
The relation between the approximation order provided by $ S(\phi) $, the order of the Strang–Fix condition and the accuracy of $ \phi $\, are established in the following theorems. 
\begin{theorem}\label{th1.4}\cite{al}
 Let $ 1 \leq q \leq \infty $, and let $ \phi $\, be a compactly supported function in $ L_{q}(\mathbb{Q}_{p}) $\, with 
 $ \widehat{\phi}(0) \neq 0 $. For every positive integer $ k $, the following statements are equivalent:
 \begin{enumerate}
  \item $ S(\phi) $\, provides approximation order $ k $.
  \item $ S(\phi) $\, contains $\mathscr{P}_{k-1} $.
  \item $ D^{\mu}\widehat{\phi}(p^{N}\alpha) = 0, \, \forall \alpha \in I_{p}\setminus \{0\},\, \lvert \mu \rvert \leq k-1 $.
 \end{enumerate}
\end{theorem}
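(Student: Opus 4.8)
The statement is the $p$-adic counterpart of the classical Strang--Fix theorem, so the plan is to split it into the approximation-theoretic equivalence $(1)\Leftrightarrow(2)$ and the Fourier-analytic equivalence $(2)\Leftrightarrow(3)$, and to treat each with its own machinery.

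First I would establish $(2)\Leftrightarrow(3)$, which is the heart of the matter. Given a polynomial $r\in\mathscr{P}_{k-1}$, membership in $S(\phi)$ means $r=\phi*'b$ for some $b\in l(I_p)$, that is, $r(x)=\sum_{\alpha\in I_p}\phi(x-\alpha)b(\alpha)$. I would pass to the Fourier side and apply the $p$-adic Poisson summation formula so that the reproduction identity becomes a condition localized at the dual points $p^{N}\alpha$, $\alpha\in I_p$. Using the remark $D^{\mu}\chi(a,x)=\lvert a\rvert_p^{\mu}\chi(a,x)$ together with the integral representation \eqref{eq1.15} of $D^{\mu}$, the monomials $x^{m}$ with $0\le m\le k-1$ get matched, one by one, to the pseudo-differential derivatives $D^{\mu}\widehat{\phi}(p^{N}\alpha)$. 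Reproducing all of $\mathscr{P}_{k-1}$ is then seen to be equivalent to the simultaneous vanishing $D^{\mu}\widehat{\phi}(p^{N}\alpha)=0$ for every $\alpha\in I_p\setminus\{0\}$ and every $\lvert\mu\rvert\le k-1$, while the surviving term at $\alpha=0$ is controlled by the hypothesis $\widehat{\phi}(0)\neq0$. This yields $(2)\Leftrightarrow(3)$.

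For $(2)\Rightarrow(1)$ I would build a quasi-interpolation operator mapping into the dilated space $S^{n}$ and estimate the error disc by disc. Because $\mathbb{Q}_p$ is the disjoint union of the discs $B_{-n}(a)$ at scale $n$, and because $S(\phi)$ reproduces $\mathscr{P}_{k-1}$ exactly, on each disc I can subtract a local polynomial of degree $<k$ that matches $f$, invoke exact reproduction, and bound the remainder by the local oscillation of $f$ as measured by $D^{k}$. Summing the local $L_q$ errors and using $\mu(B_{-n}(a))=p^{-n}$ produces $\mathrm{dist}_q(f,S^{n})\le Cp^{-nk}$, which is approximation order $k$. The reverse implication $(1)\Rightarrow(2)$ I would prove by contraposition: if some polynomial of degree $<k$ fails to lie in $S(\phi)$, I would test the approximants against a compactly supported functional that annihilates $S(\phi)$ but not that polynomial, and show the resulting error cannot decay as fast as $p^{-nk}$, contradicting approximation order $k$.

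The main obstacle is the step $(2)\Leftrightarrow(3)$. Since the shift set $I_p$ is not a group, the Poisson summation and periodization arguments must be carried out for the semi-convolution $*'$ rather than ordinary convolution, and one must track carefully which dual points $p^{N}\alpha$ actually appear. Furthermore, polynomials on $\mathbb{Q}_p$ belong to no $L_q(\mathbb{Q}_p)$, so the identity $r=\phi*'b$ and its transform have to be read in a distributional sense on $\mathbb{Q}_p$; keeping the correspondence between the order $\lvert\mu\rvert$ of the pseudo-differential derivative and the degree of the reproduced monomial consistent throughout is where the genuine bookkeeping difficulty lies.
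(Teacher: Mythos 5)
A preliminary remark: this theorem is not proved in the paper at all — it is imported from reference \cite{al} as background in Section 2 — so there is no in-paper argument to compare you against, and your proposal has to be judged on its own terms. Judged that way, it has a genuine gap at its core. Your mechanism for $(2)\Leftrightarrow(3)$ is to pair the monomials $x^{m}$, $0\le m\le k-1$, with characters via Fourier transform/Poisson summation and to ``match them one by one'' with the derivatives $D^{\mu}\widehat{\phi}(p^{N}\alpha)$. On $\mathbb{Q}_{p}$ this pairing is not defined: the polynomials in $\mathscr{P}_{k-1}$ (as defined in this paper) have coefficients in $\mathbb{Q}_{p}$ and take values in $\mathbb{Q}_{p}$, whereas $\phi$, the characters $\chi(x,\xi)$ and $\widehat{\phi}$ are complex-valued, so an expression like $\int_{\mathbb{Q}_{p}} x^{m}\phi(x)\chi(x,\xi)\,dx$ is meaningless. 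Moreover $D^{\mu}$ is not ``differentiation $\mu$ times'': by \eqref{eq1.15} it acts on the Fourier side as multiplication by $\lvert x\rvert_{p}^{\mu}$, a real-valued radial function, and $\mu$ is a real order, not an integer degree. The Euclidean dictionary (monomial degree $\leftrightarrow$ order of derivative), which your matching argument relies on, simply does not exist here; any bridge between polynomial reproduction and condition (3) must run through the functions $\lvert x\rvert_{p}^{\mu}$ — this is how every proved result in the present paper operates (e.g.\ Theorem \ref{th4.3.1}) — and your proposal offers no substitute mechanism. You also never account for the dilation factor $p^{N}$ in condition (3), which is tied to $\mathrm{supp}\,\phi\subset B_{N}(0)$; saying one must ``track carefully which dual points appear'' acknowledges the problem without solving it.

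The same Euclidean reflex undermines your $(2)\Rightarrow(1)$ step. You bound the local error by ``the local oscillation of $f$ as measured by $D^{k}$'', i.e.\ by a Taylor-type remainder; but there is no Taylor expansion for maps $\mathbb{Q}_{p}\to\mathbb{C}$, and $D^{k}$ is a nonlocal operator (convolution with the Riesz kernel $f_{-k}$ in \eqref{eq1.2}), so ``local oscillation measured by $D^{k}$'' is not a defined quantity on a disc $B_{-n}(a)$. Likewise, your $(1)\Rightarrow(2)$ sketch — testing against a compactly supported functional that annihilates $S(\phi)$ but not the missing polynomial — presupposes that the polynomial and the elements of $S^{n}$ live in a common dual pairing, which again fails both because polynomials lie in no $L_{q}(\mathbb{Q}_{p})$ and because they are not complex-valued; calling the identity $r=\phi*'b$ ``distributional'' does not repair this, since Bruhat--Schwartz distributions on $\mathbb{Q}_{p}$ are complex-valued too. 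In short, your architecture (split into $(1)\Leftrightarrow(2)$ and $(2)\Leftrightarrow(3)$) is the natural one, but every load-bearing step invokes a Euclidean tool — Taylor expansion, lattice Poisson summation, the degree/derivative correspondence — whose $p$-adic replacement is precisely the content that a proof of this theorem would have to supply.
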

\begin{definition}\cite{al}
For a compactly supported function $ \phi $\, on $ \mathbb{Q}_{p} $, define 
$$ N(\phi) = \{ \xi \in \mathbb{Q}_{p} : \widehat{\phi}(\xi) = 0 \}.$$
\end{definition}
\begin{theorem}\label{th1.5}\cite{al}
 Let $ h $\, be a finitely supported sequence on $ I_{p} $, satisfying \eqref{eq1.7} and $ H $\, be a function given by \eqref{eq1.9}. Let $ \Omega = \{ 0,\frac{1}{p}, \frac{2}{p}, \ldots, \frac{p-1}{p} \} $. If 
 \begin{equation}\label{eq1.13}
  D^{\mu}H(p^{N}A^{-1}\omega) = 0, \, \forall \omega \in \Omega \setminus \{0\},\, \lvert \mu \rvert \leq k-1,
 \end{equation}
then the normalized solution $ \phi $\, of \eqref{eq1.6} has accuracy $ k $. Conversely, if $ \phi $\, has accuracy $ k $\, 
and if $ N(\phi)\cap(p^{N}A^{-1}\Omega) = \emptyset $, then \eqref{eq1.13} holds true.
\end{theorem}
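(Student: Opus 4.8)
The plan is to use the refinement relation \eqref{eq1.8} as the bridge between the mask $H$ and the scaling function $\phi$, and to read ``accuracy $k$'' through Theorem~\ref{th1.4}: by item (3) there, $\phi$ has accuracy $k$ exactly when $D^{\mu}\widehat{\phi}(p^{N}\alpha)=0$ for every $\alpha\in I_{p}\setminus\{0\}$ and every $|\mu|\le k-1$. The theorem then amounts to transferring these Strang--Fix conditions between the points $p^{N}A^{-1}\omega$ (on $H$) and $p^{N}\alpha$ (on $\widehat{\phi}$). The first observation is that $\Omega$ is exactly the set of shifts of $p$-adic level one, $\Omega=\{a\in I_{p}:|a|_{p}\le p\}$, so that $\Omega\setminus\{0\}=\{a\in I_{p}:|a|_{p}=p\}$, whereas a general $\alpha\in I_{p}\setminus\{0\}$ has $|\alpha|_{p}=p^{\gamma}$ for some $\gamma\ge 1$.

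For the forward implication I would iterate \eqref{eq1.8}. Since $A^{-1}\xi=p\xi$, writing $\alpha_{i}=p^{i}\alpha$ (so $\alpha_{i}$ has level $\gamma-i$ and $\alpha_{\gamma}=m\in\mathbb{Z}$ is the integer numerator of $\alpha=m/p^{\gamma}$) the relation telescopes into
\begin{equation*}
 \widehat{\phi}(p^{N}\alpha)=\Big(\prod_{i=1}^{\gamma}H(p^{N}\alpha_{i})\Big)\,\widehat{\phi}(p^{N}\alpha_{\gamma}).
\end{equation*}
Because $H$ is constant on cosets of $B_{-(N+1)}(0)$ and $m\equiv a_{0}\pmod p$, the final factor equals $H(p^{N}a_{0})=H(p^{N}A^{-1}\omega)$ with $\omega=a_{0}/p\in\Omega\setminus\{0\}$, where $a_{0}\neq0$ is the leading $p$-adic digit of $\alpha$. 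Hence the hypothesis $H(p^{N}A^{-1}\omega)=0$ (the case $\mu=0$ of \eqref{eq1.13}) forces $\widehat{\phi}(p^{N}\alpha)=0$, which is the desired conclusion for $\mu=0$.

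For the converse I would instead evaluate \eqref{eq1.8} at $\xi=p^{N}\omega$, so that $A^{-1}\xi=p^{N}A^{-1}\omega$ and
\begin{equation*}
 \widehat{\phi}(p^{N}\omega)=H(p^{N}A^{-1}\omega)\,\widehat{\phi}(p^{N}A^{-1}\omega).
\end{equation*}
The hypothesis $N(\phi)\cap(p^{N}A^{-1}\Omega)=\emptyset$ is precisely what makes $\widehat{\phi}(p^{N}A^{-1}\omega)\neq0$, so this can be solved for $H(p^{N}A^{-1}\omega)$; since $\omega\in I_{p}\setminus\{0\}$, accuracy gives $\widehat{\phi}(p^{N}\omega)=0$, and therefore $H(p^{N}A^{-1}\omega)=0$.

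The main obstacle is the passage to the higher orders $1\le|\mu|\le k-1$ in both directions: $D^{\mu}$ is a nonlocal pseudo-differential operator, so there is no classical Leibniz rule for $D^{\mu}$ applied to the product $H(A^{-1}\cdot)\widehat{\phi}(A^{-1}\cdot)$, nor for the quotient in the converse. The device I would use is the scaling identity $D^{\alpha}\widehat{\Phi}=p^{\gamma\alpha}\widehat{\Phi}$ for functions whose spectrum sits on the sphere $|\xi|_{p}=p^{\gamma}$ (the third Remark of Section~2) together with the fact that $H$ and $\widehat{\phi}$ are locally constant with compact support (they lie in the spaces $\mathscr{D}_{N}^{M}$) and hence are finite linear combinations of characters. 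On such functions $D^{\mu}$ is diagonalized by the decomposition into sphere-components, so requiring $D^{\mu}(\cdot)=0$ for the consecutive orders $\mu=0,1,\dots,k-1$ becomes, through a Vandermonde-type linear system in the eigenvalues $p^{\gamma\mu}$, a finite set of purely algebraic conditions on those components; the telescoping identity above (respectively the quotient identity) then transfers these component conditions level by level. Checking that this level-counting matches the range $|\mu|\le k-1$ on both sides, and that no spurious sphere-component survives, is the step that will require the most care.
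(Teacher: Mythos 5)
You should know at the outset that this paper never proves Theorem~\ref{th1.5}: it is imported verbatim from \cite{al} as a preliminary, so the only benchmark is the route the statement itself dictates, namely reading ``accuracy $k$'' through Theorem~\ref{th1.4}(3) and transferring Strang--Fix conditions across the Fourier-side refinement relation \eqref{eq1.8}. Your frame is exactly that, and your order-zero arguments are correct in both directions: the telescoped identity $\widehat{\phi}(p^{N}\alpha)=\bigl(\prod_{i=1}^{\gamma}H(p^{N+i}\alpha)\bigr)\widehat{\phi}(p^{N+\gamma}\alpha)$, together with the fact that $H$ is constant on cosets of $B_{-(N+1)}(0)$ and $p^{\gamma}\alpha\equiv a_{0}\pmod p$, does yield $\widehat{\phi}(p^{N}\alpha)=0$ from the hypothesis $H(p^{N}A^{-1}\omega)=0$; and evaluating \eqref{eq1.8} at $p^{N}\omega$ and using $N(\phi)\cap(p^{N}A^{-1}\Omega)=\emptyset$ gives the converse for $\mu=0$.

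The genuine gap is everything with $0<\lvert\mu\rvert\le k-1$, which is the entire content of the theorem beyond $k=1$, and the mechanism you sketch for it would fail as stated. Decompose $D^{\mu}H(\xi_{0})=\sum_{\gamma}p^{\gamma\mu}H_{\gamma}(\xi_{0})$, where $H_{\gamma}$ collects the characters $\chi(l/p^{N+1},\cdot)$ with $\lvert l/p^{N+1}\rvert_{p}=p^{\gamma}$: the levels occurring in $H$ are $\gamma=1,\ldots,N+1$ plus the constant term, and in $D^{\mu}\widehat{\phi}(\xi)=\sum_{\gamma\le N}p^{\gamma\mu}\int_{\lvert x\rvert_{p}=p^{\gamma}}\phi(x)\chi(x,\xi)\,dx$ there are infinitely many levels. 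If the orders are the integers $\mu=0,1,\ldots,k-1$, you have only $k$ rows of your Vandermonde system against $N+2$ (respectively infinitely many) unknown sphere components, so the system is underdetermined whenever $N+2>k$ and component-wise vanishing --- precisely your ``no spurious sphere-component survives'' step --- does \emph{not} follow; it can be rescued only under the reading that $\mu$ ranges over all real orders in $[0,k-1]$, via linear independence of $\mu\mapsto p^{\gamma\mu}$. Worse, even granting component-wise vanishing for $H$, the ``level by level'' transfer through the product $H(A^{-1}\cdot)\,\widehat{\phi}(A^{-1}\cdot)$ is not justified: frequencies add under multiplication of characters, and $\lvert a+b\rvert_{p}$ drops below $\max(\lvert a\rvert_{p},\lvert b\rvert_{p})$ exactly when the two levels coincide, so the sphere decomposition of a product mixes levels of its factors and cannot be read off factor-by-factor. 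Thus the higher-order half of the theorem --- the part that, as you rightly note, cannot be handled by a Leibniz rule for $D^{\mu}$ (a rule this paper itself invokes rather uncritically in Theorems~\ref{th4.5.4} and~\ref{th4.6.2}) --- remains unproven; a workable completion would, for instance, move to the spatial side, where \eqref{eq1.13} becomes a sum rule on $h$ and accuracy becomes the statement that $\lvert x\rvert_{p}^{\mu}\phi(x)$ is orthogonal to every nontrivial character of the compact group $B_{N}(0)$, and then run the refinement equation \eqref{eq1.6} there using the ultrametric constancy of $\lvert x\rvert_{p}$ on the relevant balls.
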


\section{Vanishing Moments on Euclidean spaces}

In this section we recall some facts about vanishing moments on Euclidean spaces.\par
Let $ \psi $\, be a wavelet function on $ \mathbb{R} $. Then we say that $ \psi $\, has $ n $\, vanishing moments if 
$$ \int_{-\infty}^{\infty} t^{k}\psi(t)dt = 0, \text{  for  } 0\leq k < n. $$
A wavelet with $ n $\, vanishing moments is orthogonal to each of the polynomials of degree $ n-1 $ \cite{ms}.\par
The following theorem gives the relationship between the approximation order of the corresponding scaling function and the 
number of vanishing moments of the wavelets on $ \mathbb{R} $. 
\begin{theorem}\label{th4.2.1}[Theorem 7.4 in Chapter 7 of \cite{ms}]
 Let $ \psi $\, and $ \phi $\, be a wavelet and a scaling function that generate an orthogonal basis. Let $ h $\, be the 
 refinement mask for the scaling function $ \phi $. Suppose that $ \lvert \psi(x) \rvert = O((1+t^{2})^{-\frac{n}{2}-1}) $\, and 
 $ \lvert \phi(x) \rvert = O((1+t^{2})^{-\frac{n}{2}-1}) $. Then the following statements are equivalent:
 \begin{enumerate}
  \item The wavelet $ \psi $\, has $ n $\, vanishing moments.
  \item $ \widehat{\psi}(\omega) $\, and its first $ n-1 $\, derivatives are zero at $ \omega = 0 $.
  \item $ \widehat{h}(\omega) $\, and its first $ n-1 $\, derivatives are zero at $ \omega = \pi $.
  \item For any $ 0 \leq k <n $, 
         \begin{equation}\label{eq4.2.1}
          q_{k}(t) = \sum_{n=-\infty}^{\infty} n^{k} \phi(t-n) \text{  is a polynomial of degree } k.
         \end{equation}
 \end{enumerate}
\end{theorem}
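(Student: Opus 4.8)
The plan is to follow the classical argument for this statement (Mallat's Theorem 7.4) and close the loop of equivalences by proving $(1)\Leftrightarrow(2)$, $(2)\Leftrightarrow(3)$ and $(2)\Leftrightarrow(4)$. Before touching any of these I would record the role of the decay hypotheses: since $\lvert\psi(t)\rvert=O((1+t^2)^{-n/2-1})$ and $\lvert\phi(t)\rvert=O((1+t^2)^{-n/2-1})$, the products $t^k\psi(t)$ and $t^k\phi(t)$ are absolutely integrable for $0\le k\le n$. This guarantees that $\widehat\psi$ and $\widehat\phi$ are $n$ times continuously differentiable and that all the series defining $q_k$ converge, which is exactly what makes the differentiation-under-the-integral and Poisson-summation steps below legitimate.

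For $(1)\Leftrightarrow(2)$, writing $\widehat\psi(\omega)=\int\psi(t)e^{-i\omega t}\,dt$ and differentiating $k$ times under the integral sign gives $\widehat\psi^{(k)}(0)=(-i)^k\int t^k\psi(t)\,dt$. Since $(-i)^k\neq0$, the $k$-th moment vanishes precisely when $\widehat\psi^{(k)}(0)=0$, so the $n$ vanishing moments of $\psi$ are equivalent to the vanishing of $\widehat\psi$ and its first $n-1$ derivatives at $\omega=0$.

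For $(2)\Leftrightarrow(3)$, I would invoke the two-scale relations of the orthogonal MRA, namely $\widehat\phi(2\omega)=\tfrac{1}{\sqrt{2}}\widehat h(\omega)\widehat\phi(\omega)$ and $\widehat\psi(2\omega)=\tfrac{1}{\sqrt{2}}\widehat g(\omega)\widehat\phi(\omega)$ with $\widehat g(\omega)=e^{-i\omega}\overline{\widehat h(\omega+\pi)}$, together with the normalization $\widehat\phi(0)\neq0$. Because $\widehat\phi$ is smooth and nonzero near $0$ and $\omega\mapsto2\omega$ fixes the origin, the order of vanishing of $\widehat\psi$ at $0$ equals that of $\widehat g$ at $0$; since the factor $e^{-i\omega}$ is smooth and nonvanishing and conjugation preserves orders of zeros, this equals the order of vanishing of $\widehat h$ at $\pi$. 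Matching ``$n$ vanishing derivatives'' on each side yields the equivalence.

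The principal obstacle is $(2)\Leftrightarrow(4)$. Here I would apply the Poisson summation formula to $F(t,\xi):=\sum_n\phi(t-n)e^{in\xi}$, obtaining the representation $F(t,\xi)=e^{it\xi}\sum_m e^{-2\pi imt}\widehat\phi(\xi-2\pi m)$, and observe that $q_k(t)=i^{-k}\,\partial_\xi^k F(t,\xi)\big|_{\xi=0}$. Carrying out the $\xi$-differentiation by Leibniz's rule, the $m=0$ term contributes a genuine polynomial of degree exactly $k$ in $t$ (its leading coefficient is a nonzero multiple of $\widehat\phi(0)$), while each $m\neq0$ term contributes the oscillatory factor $e^{-2\pi imt}$ multiplied by a combination of the derivatives $\widehat\phi^{(j)}(-2\pi m)$, $0\le j\le k$. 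Hence $q_k$ is a polynomial of degree $k$ for every $0\le k<n$ exactly when the Strang--Fix conditions $\widehat\phi^{(j)}(2\pi m)=0$ hold for all $m\neq0$ and $0\le j<n$. The final link identifies these Strang--Fix conditions with statement $(3)$: the orthonormality identity $\sum_m\lvert\widehat\phi(\omega+2\pi m)\rvert^2=1$ already forces $\widehat\phi(2\pi m)=0$ for $m\neq0$, and iterating the scaling relation $\widehat\phi(2\omega)=\tfrac{1}{\sqrt{2}}\widehat h(\omega)\widehat\phi(\omega)$ propagates a zero of order $n$ of $\widehat h$ at $\pi$ to zeros of order $n$ of $\widehat\phi$ at every $2\pi m$ with $m\neq0$, and conversely. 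I expect the delicate points to be the justification of termwise differentiation and interchange of summation in the Poisson representation (again controlled by the decay bound) and the bookkeeping that isolates the degree and leading coefficient of the $m=0$ polynomial term.
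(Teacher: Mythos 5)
The paper itself never proves this statement: it is quoted as background (Theorem 7.4 of Mallat's book, cited as \cite{ms}) in the section recalling Euclidean results, so there is no internal proof to compare yours against, and your proposal has to be judged against the classical argument. On that score, most of your outline is sound and is indeed the standard route: $(1)\Leftrightarrow(2)$ by differentiation under the integral (legitimized by the decay hypothesis), $(2)\Leftrightarrow(3)$ via $\widehat\psi(2\omega)=\tfrac{1}{\sqrt2}e^{-i\omega}\overline{\widehat h(\omega+\pi)}\,\widehat\phi(\omega)$ together with $\widehat\phi(0)\neq0$, and $(4)\Leftrightarrow$ Strang--Fix via Poisson summation and the linear independence of the functions $t^{j}e^{-2\pi imt}$.

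The genuine gap is the words ``and conversely'' in your final link, Strang--Fix $\Rightarrow(3)$. Iterating $\widehat\phi(2\omega)=\tfrac{1}{\sqrt2}\widehat h(\omega)\widehat\phi(\omega)$ does propagate an order-$n$ zero of $\widehat h$ at $\pi$ to order-$n$ zeros of $\widehat\phi$ at every $2\pi m$, $m\neq0$; but in the reverse direction, setting $\omega=\pi+u$ only tells you that the product $\widehat h(\pi+u)\widehat\phi(\pi+u)$ vanishes to order $n$ at $u=0$, and you cannot divide out $\widehat\phi(\pi+u)$ unless $\widehat\phi(\pi)\neq0$ --- a fact you never establish, and which does not follow from the orthonormality identity (that identity only guarantees $\sum_{m}\lvert\widehat\phi(\pi+2\pi m)\rvert^{2}=1$, i.e.\ nonvanishing at \emph{some} translate of $\pi$, not at $\pi$ itself). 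There are two clean repairs. (a) Use precisely that observation plus the $2\pi$-periodicity of $\widehat h$: choose $l_{0}$ with $\widehat\phi(\pi+2\pi l_{0})\neq0$ and apply the scaling relation at $\omega=\pi+2\pi l_{0}+u$; its left-hand side $\widehat\phi\bigl(2\pi(2l_{0}+1)+2u\bigr)$ vanishes to order $n$ by Strang--Fix, and now the nonvanishing factor can be divided out, giving the order-$n$ zero of $\widehat h$ at $\pi$. (b) Replace this link altogether by the classical closing step $(4)\Rightarrow(1)$: each $q_{k}$ is a combination of integer translates of $\phi$, so $q_{k}$ ``lies in'' $V_{0}$; since $q_{k}$ has degree exactly $k$ (leading coefficient a nonzero multiple of $\widehat\phi(0)$), the family $\{q_{k}\}_{0\le k<n}$ spans all polynomials of degree $<n$; as $\psi\perp\phi(\cdot-m)$ for all $m$ and the decay bounds justify exchanging sum and integral, $\int t^{k}\psi(t)\,dt=0$ for $0\le k<n$. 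Option (b) is how Mallat closes the cycle, and it has the conceptual advantage of actually using the orthogonality of $\psi$ to the scaling space, which your loop otherwise never invokes beyond the filter identity.
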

The hypothesis $(4)$\, is called the Strang-Fix condition on $ \mathbb{R} $. \newline \par 

Let $ M $\, be a $ d \times d $\, dilation matrix on $ \mathbb{R}^{d} $\, and denote 
$ m := \lvert \det M \rvert $. Let $ \Omega $\, be a complete set of representatives of the distinct cosets of the quotient 
group $ \mathbb{Z}^{d}/M \mathbb{Z}^{d} $. It is obvious that the cardinality of $ \Omega $\, is equal to $ m $. 
Without loss of generality we assume that $ 0 \in \Omega $. \par 
For any positive integer $ k $, a compactly supported function $ f \in L_{2}(\mathbb{R}^{d}) $\, has $ k $\, vanishing 
moments if
$$ \int_{\mathbb{R}^{d}} p(x)f(x)dx = 0, \quad \text{for all } p \in \mathscr{P}_{k}, $$
where $ \mathscr{P}_{k} $\, denotes the set of all polynomials of degree less than $ k $. \par 
Let $ l_{0}(\mathbb{Z}^{d}) $\, denotes the space of all finitely supported sequences on $ \mathbb{Z}^{d} $. Then a sequence $ b \in l_{0}(\mathbb{Z}^{d}) $\, has $ k $\, discrete vanishing moments if it satisfies the following equalities
$$ \sum_{\gamma \in \mathbb{Z}^{d}} b(\gamma)\gamma^{j} = 0, \quad \text{for any } j \in \mathbb{Z}_{+}^{d}, \, \, 
\lvert j \rvert = j_{1} + \cdots + j_{d} <k, $$
where $ \gamma^{j} = \gamma_{1}^{j_{1}}\ldots \gamma_{d}^{j_{d}} $. 
\begin{proposition}\label{th4.2.2}\cite{chr}
 Given a multiresolution analsysis on $ \mathbb{R}^{d} $, with $ m-1 $\, wavelets $ \psi_{\epsilon}, \epsilon \in \Omega \setminus \{0\} $. Then $ \psi_{\epsilon}, \epsilon \in \Omega \setminus \{0\} $, have $ k $\, vanishing 
 moments if and only if the corresponding wavelet filters $ \{h_{\epsilon}(\gamma)\}, \epsilon \in \Omega \setminus \{0\} $, 
 have $ k $\, discrete vanishing moments.
\end{proposition}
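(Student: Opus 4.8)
The plan is to pass to the Fourier domain and convert both moment conditions into the vanishing of derivatives at the origin. Since each $ \psi_{\epsilon} $ is compactly supported, $ \xi \mapsto \widehat{\psi}_{\epsilon}(\xi) $ is smooth and $ D^{j}\widehat{\psi}_{\epsilon}(0) = (-2\pi i)^{|j|}\int_{\mathbb{R}^{d}} x^{j}\psi_{\epsilon}(x)\,dx $; because the monomials $ x^{j} $ with $ |j|<k $ span $ \mathscr{P}_{k} $, the statement that $ \psi_{\epsilon} $ has $ k $ vanishing moments is equivalent to $ D^{j}\widehat{\psi}_{\epsilon}(0)=0 $ for all $ |j|\le k-1 $. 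Writing the symbol $ H_{\epsilon}(\xi)=\frac{1}{m}\sum_{\gamma}h_{\epsilon}(\gamma)e^{-2\pi i\gamma\cdot\xi} $, one has $ D^{j}H_{\epsilon}(0)=\frac{(-2\pi i)^{|j|}}{m}\sum_{\gamma}h_{\epsilon}(\gamma)\gamma^{j} $, so the discrete vanishing moment condition is equivalent to $ D^{j}H_{\epsilon}(0)=0 $ for all $ |j|\le k-1 $. The proposition therefore reduces to the equivalence
$$ D^{j}H_{\epsilon}(0)=0\ (|j|\le k-1)\quad\Longleftrightarrow\quad D^{j}\widehat{\psi}_{\epsilon}(0)=0\ (|j|\le k-1). $$

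The bridge is the Fourier-transformed two-scale relation $ \widehat{\psi}_{\epsilon}(\xi)=H_{\epsilon}(L\xi)\,\widehat{\phi}(L\xi) $ with $ L=(M^{T})^{-1} $ invertible. I would differentiate this product at $ \xi=0 $ by the Leibniz rule and treat each composed factor via the chain rule for a linear change of variables: for invertible $ L $, the quantity $ D^{\alpha}(H_{\epsilon}\circ L)(0) $ is a fixed linear combination of $ \{D^{\beta}H_{\epsilon}(0):|\beta|=|\alpha|\} $, and the map sending the order-$ n $ derivative array of $ H_{\epsilon} $ to that of $ H_{\epsilon}\circ L $ is invertible (it is the $ n $-th symmetric power of $ L^{T} $); the same holds for $ \widehat{\phi}\circ L $. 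The only analytic input beyond this is the MRA normalization $ \widehat{\phi}(0)\neq 0 $.

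Granting this, the forward direction is immediate: if $ D^{\beta}H_{\epsilon}(0)=0 $ for all $ |\beta|\le k-1 $, then $ D^{\alpha}(H_{\epsilon}\circ L)(0)=0 $ for all $ |\alpha|\le k-1 $, and in the expansion $ D^{j}\widehat{\psi}_{\epsilon}(0)=\sum_{\alpha\le j}\binom{j}{\alpha}D^{\alpha}(H_{\epsilon}\circ L)(0)\,D^{j-\alpha}(\widehat{\phi}\circ L)(0) $ every summand with $ |j|\le k-1 $ carries a factor $ D^{\alpha}(H_{\epsilon}\circ L)(0) $ with $ |\alpha|\le|j|\le k-1 $, hence vanishes; thus $ \psi_{\epsilon} $ has $ k $ vanishing moments.

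The converse is where the real work lies, and I expect it to be the main obstacle; I would prove it by induction on $ n=0,1,\dots,k-1 $, establishing $ D^{\beta}H_{\epsilon}(0)=0 $ for $ |\beta|=n $. For $ n=0 $, $ 0=\widehat{\psi}_{\epsilon}(0)=H_{\epsilon}(0)\widehat{\phi}(0) $ with $ \widehat{\phi}(0)\neq0 $ forces $ H_{\epsilon}(0)=0 $. For the step, assume $ D^{\beta}H_{\epsilon}(0)=0 $ for all $ |\beta|<n $; then in the Leibniz expansion of $ D^{j}\widehat{\psi}_{\epsilon}(0) $ with $ |j|=n $, every term except $ D^{j}(H_{\epsilon}\circ L)(0)\,\widehat{\phi}(0) $ involves some $ D^{\alpha}(H_{\epsilon}\circ L)(0) $ with $ |\alpha|<n $, which vanishes by the induction hypothesis and the chain-rule remark, so $ D^{j}(H_{\epsilon}\circ L)(0)\,\widehat{\phi}(0)=0 $; since $ \widehat{\phi}(0)\neq0 $ and the $ n $-th symmetric power of $ L^{T} $ is invertible, we conclude $ D^{\beta}H_{\epsilon}(0)=0 $ for all $ |\beta|=n $. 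The delicate points are the bookkeeping of which multi-indices survive in the expansion and the invertibility of the induced map on order-$ n $ derivatives; everything else is a routine use of Leibniz's formula and the compact-support moment–derivative dictionary.
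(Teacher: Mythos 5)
Your argument is correct, but there is nothing in the paper to compare it against line by line: Proposition~\ref{th4.2.2} is quoted from \cite{chr} as Euclidean background and is never proved in this paper. The closest internal comparison is the paper's proof of its $p$-adic analogue, Theorem~\ref{th4.3.2}, and your proof follows the same route taken there: translate vanishing moments into vanishing derivatives of $\widehat{\psi}_{\epsilon}$ at the origin (Theorem~\ref{th4.3.1} is the paper's version of your moment--derivative dictionary), translate discrete vanishing moments into vanishing derivatives of the filter symbol at the origin, and pass between the two through the Fourier-transformed two-scale relation. The difference is one of rigor, and it is in your favor: at the corresponding step the paper simply asserts that $D^{\mu}\widehat{\psi_{j}}(0)=0$ for all $0\leq\mu<k$ holds if and only if $D^{\mu}H_{j}(0)=0$ for all $0\leq\mu<k$, without invoking $\widehat{\phi}(0)\neq 0$ or any Leibniz bookkeeping, whereas your induction --- base case $H_{\epsilon}(0)=0$ forced by $\widehat{\phi}(0)\neq 0$, inductive step in which all lower-order Leibniz terms die, and invertibility of the $n$-th symmetric power of $L^{T}$ to return from derivatives of $H_{\epsilon}\circ L$ to derivatives of $H_{\epsilon}$ --- is exactly the content needed to make that assertion legitimate, and the symmetric-power step is genuinely necessary in the multivariate setting because $(M^{T})^{-1}$ mixes partial derivatives of the same order. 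The only things you should state as standing hypotheses rather than in passing are that $\phi$ and the $\psi_{\epsilon}$ are compactly supported with finitely supported filters (so all Fourier transforms are entire and the Leibniz rule applies) and that $\widehat{\phi}(0)\neq 0$; both are automatic in the MRA framework of \cite{chr} and in the paper's Section~3 conventions.
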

A finitely supported sequence $ h $\, on $ \mathbb{Z}^{d} $\, satisfies the sum rules \cite{rqj1} of order $ k $\, if
$$ \sum_{\gamma \in \mathbb{Z}^{d}} h(\epsilon+M(\gamma))p(\epsilon+M(\gamma)) = \sum_{\gamma \in \mathbb{Z}^{d}} h(M(\gamma))
p(M(\gamma)) $$
for all $ \epsilon \in \Omega, \, p \in \mathscr{P}_{k} $. \par
The following proposition gives the relationship between the order of sum rules and the number of discrete vanishing moments 
on $ \mathbb{R}^{d} $.
\begin{proposition}\label{th4.2.3}\cite{chr}
 Let $ h \in l_{0}(\mathbb{Z}^{d}) $\, satisfies the sum rules of order $ k $\, and $ \sum_{\gamma \in \mathbb{Z}^{d}} 
 h(\gamma) = 2^{d} $. If $ b \in l_{0}(\mathbb{Z}^{d}) $\, satisfies $ \sum_{\gamma \in \mathbb{Z}^{d}} b(\gamma)
 h(\gamma-M(\beta)) = 0 $, for any $ \beta \in \mathbb{Z}^{d} $, then $ b $\, has $ k $\, discrete vanishing moments. 
\end{proposition}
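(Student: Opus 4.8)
The plan is to show that the orthogonality hypothesis forces $\sum_{\gamma\in\mathbb{Z}^{d}}b(\gamma)p(\gamma)=0$ for every $p\in\mathscr{P}_{k}$; since $\mathscr{P}_{k}$ is spanned by the monomials $\gamma^{j}$ with $|j|<k$, this is exactly the statement that $b$ has $k$ discrete vanishing moments. The device linking the two is the subdivision (refinement) operator $S_{h}$ on sequences, $(S_{h}c)(\gamma):=\sum_{\beta\in\mathbb{Z}^{d}}h(\gamma-M(\beta))c(\beta)$. Testing the hypothesis $\sum_{\gamma}b(\gamma)h(\gamma-M(\beta))=0$ against an arbitrary sequence $c$ and interchanging the (finite, since $b$ has finite support) sums gives
\[
 \sum_{\gamma\in\mathbb{Z}^{d}} b(\gamma)\,(S_{h}c)(\gamma)=\sum_{\beta\in\mathbb{Z}^{d}} c(\beta)\sum_{\gamma\in\mathbb{Z}^{d}} b(\gamma)\,h(\gamma-M(\beta))=0 .
\]
Hence it suffices to produce, for each monomial $\gamma\mapsto\gamma^{j}$ with $|j|<k$, a polynomial $q$ such that $S_{h}q$ (meaning $S_{h}$ applied to the sequence $(q(\beta))_{\beta\in\mathbb{Z}^{d}}$) equals that monomial; then $\sum_{\gamma}b(\gamma)\gamma^{j}=\sum_{\gamma}b(\gamma)(S_{h}q)(\gamma)=0$.

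The key lemma I would isolate is \emph{polynomial reproduction}: if $h$ satisfies the sum rules of order $k$ and $\sum_{\gamma}h(\gamma)=2^{d}$ (which equals $m=|\det M|$), then $S_{h}$ carries $\mathscr{P}_{k}$ onto itself as a degree-preserving linear bijection. I would prove this in three steps. First, fix a coset representative $\epsilon\in\Omega$ and write $\gamma=\epsilon+M(\alpha)$; then $(S_{h}q)(\epsilon+M(\alpha))=\sum_{\delta}h(\epsilon+M(\delta))\,q(\alpha-\delta)$ is a finite linear combination of shifts of $q$, hence automatically a polynomial $P_{\epsilon}(\alpha)$ in $\alpha$ of degree at most $\deg q$. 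Second, I would show the $P_{\epsilon}$ glue into a single global polynomial in $\gamma$; this is where the sum rules enter. The Taylor expansion $q(\alpha-\delta)=\sum_{\nu}\frac{(-1)^{|\nu|}}{\nu!}\,\delta^{\nu}(\partial^{\nu}q)(\alpha)$ reduces $P_{\epsilon}$ to the weighted moments $T_{\nu}(\epsilon):=\sum_{\delta}h(\epsilon+M(\delta))\,\delta^{\nu}$, and the coset-independence built into the sum rules is precisely what verifies the compatibility $P_{\epsilon}(\alpha)=P_{0}(\alpha+M^{-1}\epsilon)$, i.e. that $\gamma\mapsto(S_{h}q)(\gamma)$ coincides with one polynomial on every coset. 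Third, a leading-term computation gives degree preservation: from $\sum_{\gamma}h(\gamma)=2^{d}$ and the order-one sum rule one obtains $T_{0}(\epsilon)\equiv 1$, so the top homogeneous part of $S_{h}q$ is $q_{r}(M^{-1}\gamma)$ when $\deg q=r$; thus $S_{h}$ is degree-preserving on $\mathscr{P}_{k}$, hence injective, hence the asserted bijection.

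With the lemma in hand the proposition is immediate: for each $j$ with $|j|<k$ choose $q_{j}\in\mathscr{P}_{k}$ with $S_{h}q_{j}$ equal to the monomial $\gamma\mapsto\gamma^{j}$, and the first displayed identity yields $\sum_{\gamma}b(\gamma)\gamma^{j}=\sum_{\gamma}b(\gamma)(S_{h}q_{j})(\gamma)=0$. As $j$ ranges over all multi-indices with $|j|<k$, this is exactly the defining property of $k$ discrete vanishing moments for $b$.

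I expect the gluing step (global polynomiality across cosets) to be the main obstacle: per-coset polynomiality is essentially free, and the full force of the sum rules is spent translating the coset-independence of the weighted moment sums into the statement that the pieces $P_{\epsilon}$ are restrictions of a single polynomial. Concretely, the heart of the matter is verifying $P_{\epsilon}(\alpha)=P_{0}(\alpha+M^{-1}\epsilon)$, which unwinds to a system of identities among the $T_{\nu}(\epsilon)$ for $|\nu|<k$; establishing these cleanly — for instance by an induction on $|\nu|$ that peels off one degree at a time, invoking the order-$(|\nu|+1)$ sum rule at each stage — is the technical core, after which degree preservation and the final summation are routine.
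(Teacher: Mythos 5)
There is nothing to compare against here: the paper does not prove this proposition at all — it appears in the background Section 3 as a quoted result from \cite{chr}. So your proposal can only be judged on its own merits, and on those merits it is correct: the pairing identity $\sum_{\gamma} b(\gamma)(S_{h}c)(\gamma)=\sum_{\beta}c(\beta)\sum_{\gamma}b(\gamma)h(\gamma-M(\beta))=0$ is legitimate (all interchanges involve only finitely many nonzero terms because $b$ and $h$ are finitely supported), and your key lemma — sum rules of order $k$ plus the normalization make $S_{h}$ a degree-preserving bijection of $\mathscr{P}_{k}$ onto itself — is a true, classical fact about subdivision operators, so the reduction of the proposition to it is sound. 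Two remarks. First, the gluing step you single out as the technical core has a much cleaner resolution than the induction on $|\nu|$ you sketch: for $\gamma=\epsilon+M(\alpha)$ write $q(\alpha-\delta)=q\bigl(M^{-1}(\gamma-(\epsilon+M(\delta)))\bigr)$, so that $(S_{h}q)(\gamma)=\sum_{\delta}h(\epsilon+M(\delta))\,r_{\gamma}(\epsilon+M(\delta))$ where $r_{\gamma}(y):=q(M^{-1}(\gamma-y))$ is, for each fixed $\gamma$, a polynomial of degree $\deg q<k$; applying the sum rules to $r_{\gamma}$ gives $(S_{h}q)(\gamma)=\sum_{\delta}h(M(\delta))\,q(M^{-1}\gamma-\delta)$, which is visibly a single global polynomial of degree $\deg q$ with leading homogeneous part $q_{r}(M^{-1}\gamma)$ once one checks $\sum_{\delta}h(M(\delta))=1$; polynomiality, degree preservation, injectivity and hence bijectivity all drop out of this one identity, so the ``system of identities among the $T_{\nu}(\epsilon)$'' never needs to be confronted. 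Second, your reading of the normalization $\sum_{\gamma}h(\gamma)=2^{d}$ as $|\det M|=2^{d}$ (dilation $M=2I$) is the intended one; for a general dilation matrix the hypothesis should be $\sum_{\gamma}h(\gamma)=m$, exactly as you note. For contrast, proofs of this statement in the literature (including the cited source) are usually run on the Fourier side, where sum rules become order-$k$ zeros of the symbol of $h$ at the nonzero representatives of $2\pi(M^{T})^{-1}\mathbb{Z}^{d}/\mathbb{Z}^{d}$, the orthogonality hypothesis becomes the vanishing of a bracket product, and the conclusion is an order-$k$ zero of $\widehat{b}$ at the origin extracted by Leibniz's rule; your time-domain argument is more elementary and self-contained, at the price of having to establish polynomial reproduction for $S_{h}$.
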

If the refinement mask $ h $\, of the scaling function $ \phi $\, satisfies the sum rule of order $ k $, then the corresponding 
wavelet filters $ h_{\epsilon} $\, satisfies 
$$ \sum_{\gamma \in \mathbb{Z}^{d}} h_{\epsilon}(\gamma)h(\gamma-M(\beta)) = 0, \quad \text{for any } \beta \in \mathbb{Z}^{d}, 
\, \epsilon \in \Omega \setminus \{ 0\}. $$ 
Thus, by Propositions \ref{th4.2.2} and \ref{th4.2.3}, the corresponding wavelets $ \psi_{\epsilon} $\, has vanishing moments of order $ k $, for each $ \epsilon \in \Omega \setminus \{ 0\} $.

\section{$ p $-Vanishing Moments on $ \mathbb{Q}_{p} $}

This section gives a description of vanishing moments and discrete vanishing moments on $ \mathbb{Q}_{p} $. 
\begin{definition}\label{df4.3.1}
 For any positive integer $ k $, a compactly supported function $ f $\, in $ L_{2}(\mathbb{Q}_{p}) $\, is said to have $ k $\, 
 $ p $-vanishing moments if 
 \begin{equation}\label{eq4.3.1}
  \int_{\mathbb{Q}_{p}} \lvert x \rvert_{p}^{\mu}f(x)dx = 0, \quad \text{for all } 0\leq \mu < k .
 \end{equation}
\end{definition}
\begin{theorem}\label{th4.3.1}
 Let $ f $\, be a compactly supported function in $ L_{2}(\mathbb{Q}_{p}) $. Suppose that $ \widehat{f} $\, is $ k $\, times pseudo-differentiable. Then $ f $\, has $ k $ $ p $-vanishing moments if and only if $ D^{\mu}\widehat{f}(0) = 0, 
 \text{ for all } 0 \leq \mu < k $.
\end{theorem}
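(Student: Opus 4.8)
The plan is to reduce the claimed equivalence to the single pointwise identity
$$ (D^{\mu}\widehat{f})(0) = \int_{\mathbb{Q}_{p}} \lvert x \rvert_{p}^{\mu} f(x)\, dx, \qquad 0 \le \mu < k, $$
after which the theorem is immediate: the right-hand side is exactly the $\mu$-th $p$-moment from Definition \ref{df4.3.1}, so it vanishes for every $\mu$ with $0 \le \mu < k$ precisely when $D^{\mu}\widehat{f}(0) = 0$ for all such $\mu$. Hence the entire proof consists of verifying this one identity, and there is no separate ``only if'' and ``if'' argument to organize.

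First I would apply the integral representation \eqref{eq1.15} of the pseudo-differential operator to the function $\widehat{f}$ itself, obtaining
$$ (D^{\mu}\widehat{f})(x) = \int_{\mathbb{Q}_{p}} \lvert \xi \rvert_{p}^{\mu}\, \widehat{\widehat{f}}(\xi)\, \chi(x, -\xi)\, d\xi, $$
where $\widehat{\widehat{f}}$ denotes the Fourier transform of $\widehat{f}$. The next ingredient is Fourier inversion on $\mathbb{Q}_{p}$: taking the exponent $0$ in \eqref{eq1.15} recovers $\phi(x) = \int_{\mathbb{Q}_{p}} \widehat{\phi}(\xi)\chi(x,-\xi)\, d\xi$, and combining this with the symmetry $\{x\xi\}_{p} = \{\xi x\}_{p}$ of the character $\chi$ yields $\widehat{\widehat{f}}(\xi) = f(-\xi)$.

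Then I would substitute $\widehat{\widehat{f}}(\xi) = f(-\xi)$ and evaluate at $x = 0$. Since $\chi(0, -\xi) = e^{2\pi i \{0\}_{p}} = 1$, this gives
$$ (D^{\mu}\widehat{f})(0) = \int_{\mathbb{Q}_{p}} \lvert \xi \rvert_{p}^{\mu}\, f(-\xi)\, d\xi. $$
The change of variable $\xi \mapsto -\xi$, which leaves both the Haar measure $d\xi$ and the norm $\lvert \xi \rvert_{p}$ invariant, converts the last integral into $\int_{\mathbb{Q}_{p}} \lvert x \rvert_{p}^{\mu} f(x)\, dx$, establishing the key identity and thereby the theorem.

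The manipulations of evaluating the character at $0$ and the sign change of variable are harmless; the points needing genuine care are the analytic justifications. The hard part will be legitimizing the two integral steps under the stated hypotheses: that Fourier inversion truly applies so that $\widehat{\widehat{f}}(\xi) = f(-\xi)$, and that the representation \eqref{eq1.15} for $D^{\mu}\widehat{f}$ is valid and may be evaluated pointwise at $x = 0$ by passing the evaluation inside the integral. The assumption that $\widehat{f}$ is $k$ times pseudo-differentiable is precisely what licenses the latter for every $0 \le \mu < k$, while the compact support of $f$ (so that $f \in L_{1}(\mathbb{Q}_{p}) \cap L_{2}(\mathbb{Q}_{p})$, the moment integrals converge absolutely since $\lvert x \rvert_{p}^{\mu}$ is bounded on $\operatorname{supp} f$, and $\widehat{f}$ is correspondingly well-behaved) is what underwrites the inversion step.
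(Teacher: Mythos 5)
Your proposal is correct and follows essentially the same route as the paper's own proof: apply the representation \eqref{eq1.15} to $\widehat{f}$ in place of $\phi$, evaluate at the point $0$, and identify the resulting integral with the $\mu$-th $p$-moment of $f$, so that the equivalence is immediate from Definition \ref{df4.3.1}. If anything, you are more careful than the paper, which silently swaps the roles of $f$ and $\widehat{f}$ to write $(D^{\mu}\widehat{f})(\xi)=\int_{\mathbb{Q}_{p}}\lvert x\rvert_{p}^{\mu}f(x)\chi(\xi,-x)\,dx$ without acknowledging that inversion actually yields $\widehat{\widehat{f}}(\xi)=f(-\xi)$ — the reflection you correctly remove by the measure-preserving change of variables $\xi\mapsto-\xi$, which is harmless at $\xi=0$.
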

\begin{proof}
 From the definition of the pseudo-differential operator $ D^{\mu} $\, we have,
 $$ (D^{\mu} f )(x) = \int_{\mathbb{Q}_{p}} \lvert \xi \rvert_{p}^{\mu} \widehat{f}(\xi) \chi (x, -\xi)d\xi, \qquad \mu > 0. $$
 Thus we have, $$ (D^{\mu} \widehat{f} ) (\xi) = \int_{\mathbb{Q}_{p}} \lvert x \rvert_{p}^{\mu} f(x) \chi (\xi, -x)dx, \qquad \mu > 0. $$
Then, $$ (D^{\mu}\widehat{f})(0) = \int_{\mathbb{Q}_{p}} \lvert x \rvert_{p}^{\mu} f(x)dx, \qquad \mu > 0. $$
Hence, $ f $\, has $ k $ $ p $-vanishing moments if and only if $ D^{\mu}\widehat{f}(0) = 0, \, \forall \, \, 0 \leq \mu < k $.
 \end{proof}
\begin{definition}\label{df4.3.2}
 A sequence $ b \in l_{0}(I_{p}) $\, has $ k $\, discrete $ p $-vanishing moments if it satisfies the equalities
 \begin{equation}\label{eq4.3.2}
  \sum_{a \in I_{p}} b(a)\lvert a \rvert_{p}^{\mu} = 0, \quad \text{for all } 0 \leq \mu < k.
 \end{equation}
\end{definition}
\begin{theorem}\label{th4.3.2}
 Given a multiresolution analsysis on $ \mathbb{Q}_{p} $, with $ p-1 $\, wavelets $ \psi_{j}, \, j = 1, \ldots, p-1 $. Then $ \psi_{j} $, $ j = 1, \ldots, p-1 $, have $ k $\, $ p $-vanishing moments if and only if the 
 corresponding wavelet filters $ h_{j} , \, j = 1, \ldots, p-1 $, have $ k $\, discrete $ p $-vanishing moments.
\end{theorem}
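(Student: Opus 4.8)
The plan is to translate each of the two conditions into a statement about the pseudo-differential operator $D^\mu$ evaluated at the origin, and then to link the two through the Fourier-side refinement identity for $\psi_j$. First I would Fourier transform \eqref{eq1.10}: exactly as \eqref{eq1.8} is obtained from the refinement equation, one finds $\widehat{\psi_j}(\xi)=H_j(A^{-1}\xi)\widehat{\phi}(A^{-1}\xi)$, where $H_j(\xi)=\frac1p\sum_{a\in I_p}h_j(a)\chi(a,\xi)$ is the trigonometric polynomial associated with the filter $h_j$, in analogy with \eqref{eq1.9}. Since $A^{-1}$ is multiplication by $p$, this writes $\widehat{\psi_j}$ as the single function $H_j\widehat{\phi}$ composed with the dilation $A^{-1}$.

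Next I would give the discrete side a clean spectral description. Applying $D^\mu$ in the variable $\xi$ to $H_j(\xi)=\frac1p\sum_a h_j(a)\chi(a,\xi)$ and using Remark 2.2, namely $D^\mu\chi(a,\xi)=|a|_p^\mu\chi(a,\xi)$, gives $D^\mu H_j(\xi)=\frac1p\sum_a h_j(a)|a|_p^\mu\chi(a,\xi)$; evaluating at $\xi=0$ with $\chi(a,0)=1$ yields $D^\mu H_j(0)=\frac1p\sum_a h_j(a)|a|_p^\mu$ (the term $a=0$ contributing $0$ for $\mu>0$, consistent with $|0|_p^\mu=0$). Hence, by Definition \ref{df4.3.2}, $h_j$ has $k$ discrete $p$-vanishing moments if and only if $D^\mu H_j(0)=0$ for all $0\le\mu<k$.

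For the continuous side, Theorem \ref{th4.3.1} already states that $\psi_j$ has $k$ $p$-vanishing moments if and only if $D^\mu\widehat{\psi_j}(0)=0$ for all $0\le\mu<k$, so it remains to match these vanishing conditions with those of $D^\mu H_j(0)$. Starting from $\widehat{\psi_j}=(H_j\widehat{\phi})(A^{-1}\cdot)$ and the dilation behaviour of $D^\mu$ (deducible from \eqref{eq1.15} or from Remark 2.2 applied on each sphere $|\xi|_p=p^\gamma$), one obtains $D^\mu[g(A^{-1}\cdot)](0)=p^{-\mu}(D^\mu g)(0)$, whence $D^\mu\widehat{\psi_j}(0)=p^{-\mu}D^\mu(H_j\widehat{\phi})(0)$. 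This reduces the whole theorem to comparing $D^\mu(H_j\widehat{\phi})(0)$ with $D^\mu H_j(0)$, and it is here that I would use crucially that $\widehat{\phi}(0)\neq 0$.

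The hard part is precisely this comparison, because $D^\mu$ satisfies no Leibniz rule: $D^\mu(H_j\widehat{\phi})(0)$ is genuinely not $\sum_\nu\binom{\mu}{\nu}D^\nu H_j(0)\,D^{\mu-\nu}\widehat{\phi}(0)$, and the product mixes the behaviour of $\widehat{\phi}$ away from the origin into the value at $0$. My plan to control this is to exploit that $\widehat{\phi}$ is a $p$-adic scaling function with $\operatorname{supp}\widehat{\phi}\subset B_0(0)$ (Theorem \ref{th1.2}), hence locally constant and equal to $\widehat{\phi}(0)\neq 0$ on a neighbourhood of the origin, and then to induct on $\mu$: the case $\mu=0$ gives $D^0\widehat{\psi_j}(0)=\widehat{\phi}(0)\,D^0H_j(0)$ at once, while at stage $\mu$ the correction terms produced by the off-origin part of $\widehat{\phi}$ should be expressible through the lower-order quantities $D^\nu H_j(0)$, $\nu<\mu$, which the inductive hypothesis has already annihilated. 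In this way $D^\nu H_j(0)=0$ for $\nu<k$ forces $D^\mu\widehat{\psi_j}(0)=0$ for $\mu<k$ and conversely, and Theorem \ref{th4.3.1} closes the argument. I expect the only real obstacle to be the bookkeeping that shows these cross terms are strictly of lower order and therefore vanish under the inductive hypothesis; verifying this rigorously, rather than treating $D^\mu$ as if it were local at the origin, is the delicate point of the proof.
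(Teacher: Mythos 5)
Your proposal follows the same route as the paper's own proof: Fourier-transform \eqref{eq1.10}, invoke Theorem \ref{th4.3.1} to convert the $p$-vanishing moments of $\psi_j$ into $D^{\mu}\widehat{\psi_j}(0)=0$, and use Remark 2.2 to identify $D^{\mu}H_j(0)=\frac{1}{p}\sum_{a}h_j(a)\lvert a\rvert_p^{\mu}$ with the discrete moments of the filter; your dilation identity $D^{\mu}\bigl[g(A^{-1}\cdot)\bigr](0)=p^{-\mu}\,(D^{\mu}g)(0)$ is also correct. The difference lies at the step you call the hard part: the paper's proof simply asserts, in one sentence, that $D^{\mu}\widehat{\psi_j}(0)=0$ for all $\mu<k$ if and only if $D^{\mu}H_j(0)=0$ for all $\mu<k$, with no justification --- implicitly treating the nonlocal operator $D^{\mu}$ as if it obeyed a product rule at the origin (elsewhere, in the proofs of Theorems \ref{th4.5.4} and \ref{th4.6.2}, the paper even invokes a ``Leibniz formula'' for $D^{\mu}$, which this operator does not satisfy). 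So you have correctly isolated the one piece of genuine content in the theorem, and on this point your write-up is more honest than the paper.

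However, your plan for that piece does not close as stated, so a real gap remains. The product $H_j\widehat{\phi}$ is the Fourier transform of the semi-convolution $\frac{1}{p}(\phi *' h_j)$, so the moment formula behind Theorem \ref{th4.3.1} gives
\[
D^{\mu}\bigl(H_j\widehat{\phi}\bigr)(0)=\frac{1}{p}\sum_{a}h_j(a)\int_{\mathbb{Q}_p}\lvert y+a\rvert_p^{\mu}\,\phi(y)\,dy .
\]
When $\text{supp}\,\phi\subset B_0(0)$, each term with $a\neq0$ collapses to $\widehat{\phi}(0)\lvert a\rvert_p^{\mu}$ (since $\lvert a\rvert_p\geq p$ forces $\lvert y+a\rvert_p=\lvert a\rvert_p$ on the support), so for $\mu>0$
\[
D^{\mu}\bigl(H_j\widehat{\phi}\bigr)(0)=\widehat{\phi}(0)\,D^{\mu}H_j(0)+\frac{h_j(0)}{p}\,m_{\mu},\qquad m_{\mu}:=\int_{\mathbb{Q}_p}\lvert y\rvert_p^{\mu}\phi(y)\,dy .
\]
The cross term is \emph{not} of strictly lower order at the first nontrivial stage: with $N=0$ all nonzero $a$ have $\lvert a\rvert_p=p$, so once $H_j(0)=0$ one has $h_j(0)=-D^{1}H_j(0)$, and the identity at $\mu=1$ reads $D^{1}(H_j\widehat{\phi})(0)=\bigl(\widehat{\phi}(0)-p^{-1}m_{1}\bigr)D^{1}H_j(0)$ --- order $1$ re-enters on the right, and the inductive hypothesis annihilates nothing. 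The equivalence is saved not by induction but by a non-degeneracy check, $\widehat{\phi}(0)\neq p^{-\mu}m_{\mu}$ (true, e.g., for $\phi=1_{B_0(0)}$, where $p^{-\mu}m_{\mu}=\frac{p^{-\mu}-p^{-\mu-1}}{1-p^{-\mu-1}}<1=\widehat{\phi}(0)$); in the converse direction one instead observes that two discrete vanishing moments already force $h_j(0)=0$, killing the cross term outright. For scaling functions with $\text{supp}\,\phi\subset B_N(0)$, $N>0$, the shifts with $\lvert a\rvert_p\leq p^{N}$ contribute further terms $\int\lvert y+a\rvert_p^{\mu}\phi(y)\,dy$ that admit no simplification and require the same kind of explicit control. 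This quantitative handling of the cross terms is the missing step --- flagged but unresolved in your proposal, and passed over in silence in the paper.
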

\begin{proof}
 From Theorem \ref{th4.3.1},  $ \psi_{j}, \, j = 1, \ldots, p-1 $, have $ k $\, $ p $-vanishing moments if and only if 
 $ D^{\mu}\widehat{\psi_{j}}(0) = 0, \text{ for all } 0 \leq \mu < k, \, j= 1, \ldots, p-1 $. By the definition of wavelets \eqref{eq1.10}, 
 $ \psi_{j}, \, j = 1, \ldots, p-1 $\, are of the form, 
 $$ \psi_{j}(x) = \sum_{k=0}^{p^{N+1}-1} h_{j}\left(\frac{k}{p^{N+1}}\right) \phi \left( \frac{x}{p}-\frac{k}{p^{N+1}}\right), \quad x \in \mathbb{Q}_{p}. $$
 Taking the Fourier transform on both sides, 
 \begin{equation}\label{eq4.3.3}
 \widehat{\psi}_{j}(\xi) = H_{j}(T^{-1}\xi)\widehat{\phi}(T^{-1}\xi), \quad \xi \in \mathbb{Q}_{p},
\end{equation}
where
\begin{equation}\label{eq4.3.4}
 H_{j}(\xi) = \frac{1}{p}\sum_{k=0}^{p^{N+1}-1} h_{j}\left(\frac{k}{p^{N+1}}\right)\chi\left(\frac{k}{p^{N+1}}, \xi\right), \quad \xi \in \mathbb{Q}_{p},\, 
 j = 1, \ldots, p-1.
\end{equation} 
Thus, $ D^{\mu}\widehat{\psi_{j}}(0) = 0, \text{ for all } 0 \leq \mu < k, \, j= 1, \ldots, p-1 $\, if and only if 
$$ D^{\mu}H_{j}(0) = 0, \text{ for all } 0 \leq \mu < k, \, j= 1, \ldots, p-1. $$ That is, if and only if 
$$ \sum_{k=0}^{p^{N+1}-1} h_{j}\left(\frac{k}{p^{N+1}}\right)\left \lvert \frac{k}{p^{N+1}} \right \rvert_{p}^{\mu} = 0, \text{ for all } 0 \leq \mu < k, 
\, j= 1, \ldots, p-1. $$ 
i.e., if and only if each $ h_{j} , \, j = 1, \ldots, p-1 $, has $ k $\, discrete $ p $-vanishing moments.
\end{proof}

\begin{theorem}\label{th4.3.3}
 Let $ \phi \in L_{2}(\mathbb{Q}_{p}) $\, be a scaling function that generates a MRA and $ \text{supp}(\phi)\subset B_{0}(0) $. 
 Then the corresponding refinement mask of $ \phi $\, is of the form $ \left\{ h\left(\frac{k}{p}\right) \right\} $\, with $ h\left(\frac{k}{p}\right) = 1, 
 \text{ for all } k = 0,1, \ldots, p-1 $. Moreover, $ \phi $\, has accuracy 1.
\end{theorem}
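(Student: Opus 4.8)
The plan is to pin down the refinement mask using two normalizations at once, and then read off accuracy directly from Theorem~\ref{th1.5}. Since $\operatorname{supp}(\phi)\subset B_0(0)$ we are in the case $N=0$, so the refinement equation \eqref{eq1.6} has exactly $p$ terms,
$$\phi(x)=\sum_{k=0}^{p-1}h\!\left(\tfrac{k}{p}\right)\phi\!\left(\tfrac{x-k}{p}\right),$$
and the normalization \eqref{eq1.7} reads $\sum_{k=0}^{p-1}h(k/p)=p$. The key structural observation is that the $k$-th summand $\phi((x-k)/p)$ is supported on $B_{-1}(k)$, and the balls $B_{-1}(0),\dots,B_{-1}(p-1)$ are exactly the disjoint cosets of $B_{-1}(0)$ partitioning $B_0(0)$. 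Hence the summands never overlap, and on each $B_{-1}(k)$ one simply has $\phi(x)=h(k/p)\,\phi((x-k)/p)$.

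First I would extract a second identity from orthonormality. MRA axiom $(5)$ forces $\|\phi\|_2=1$ (in fact the disjointness of the translated supports $B_0(a)$, $a\in I_p$, already makes the off-diagonal orthogonality automatic). Computing $\|\phi\|_2^2$ through the refinement equation, the disjoint supports annihilate all cross terms, and the substitution $u=(x-k)/p$, which carries $B_{-1}(k)$ onto $B_0(0)$ with $dx=p^{-1}\,du$, gives $\int_{B_{-1}(k)}|\phi((x-k)/p)|^2\,dx=p^{-1}\|\phi\|_2^2=p^{-1}$. Summing over $k$ yields $\sum_{k=0}^{p-1}|h(k/p)|^2=p$.

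The crux is to combine the two identities $\sum_k h(k/p)=p$ and $\sum_k|h(k/p)|^2=p$. Applying Cauchy--Schwarz to the $p$-vector $(h(k/p))_k$ against the all-ones vector gives $p^2=\bigl|\sum_k h(k/p)\bigr|^2\le p\sum_k|h(k/p)|^2=p^2$, so equality holds; the equality case forces all the $h(k/p)$ to be equal, and $\sum_k h(k/p)=p$ then pins each one to $h(k/p)=1$, for $k=0,\dots,p-1$. This is the main (and essentially only delicate) point: everything hinges on the two normalizations both taking the value $p$, which is precisely what the disjoint-support geometry guarantees.

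For the accuracy claim I would feed $h\equiv 1$ into the symbol \eqref{eq1.9}, obtaining $H(\xi)=\tfrac1p\sum_{k=0}^{p-1}\chi(k/p,\xi)$. Evaluating at the points $p^{N}A^{-1}\omega=p\omega$ for $\omega=j/p\in\Omega\setminus\{0\}$, that is at $\xi=j$ with $j=1,\dots,p-1$, one finds $H(j)=\tfrac1p\sum_{k=0}^{p-1}e^{2\pi i kj/p}$, the sum of all $p$-th roots of unity, which vanishes. Thus hypothesis \eqref{eq1.13} of Theorem~\ref{th1.5} holds with $k=1$ (only $|\mu|=0$ occurs, so $D^{0}H=H$), and the forward direction of that theorem shows the normalized solution of \eqref{eq1.6} — which is our $\phi$ up to the harmless normalization $\widehat{\phi}(0)=1$ — has accuracy $1$. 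I expect the only remaining bookkeeping subtlety to be confirming that $p^{N}A^{-1}(\Omega\setminus\{0\})$ is exactly $\{1,\dots,p-1\}$, after which the root-of-unity cancellation is immediate.
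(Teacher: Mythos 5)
Your proof is correct, and the route to $h(k/p)=1$ is genuinely different from the paper's. The paper works on the Fourier side: it invokes Theorem~\ref{th1.2} (orthonormality of the shifts forces $H(0)=1$ and $H(j)=0$ for $j=1,\dots,p-1$), which together with \eqref{eq1.7} produces $p$ linear equations $Uh=B$ in the mask values, where $U$ is (up to the factor $1/\sqrt{p}$) the character matrix $\bigl(\chi(k/p,l)\bigr)_{l,k}$; unitarity and symmetry of this DFT-type matrix then give $h=\bar{U}B$, i.e.\ $h(k/p)=1$ for all $k$. You instead work on the space side: the disjointness of the cosets $B_{-1}(k)$, $k=0,\dots,p-1$, partitioning $B_{0}(0)$ kills all cross terms in $\|\phi\|_2^2$, yielding the quadratic constraint $\sum_{k}|h(k/p)|^{2}=p$, which combined with the linear constraint \eqref{eq1.7} and the equality case of Cauchy--Schwarz pins every $h(k/p)$ to $1$. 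Your version is more elementary and self-contained: it uses only MRA axiom (5) and the support geometry, and it sidesteps Theorem~\ref{th1.2} entirely (whose converse direction, as stated, assumes $\text{supp}\,\widehat{\phi}\subset B_{0}(0)$ rather than $\text{supp}\,\phi\subset B_{0}(0)$, a hypothesis the paper's proof does not explicitly verify). What the paper's linear-system route buys is the additional structural information of Theorem~\ref{th1.2} ($|H(k)|=1$ at points divisible by $p$), though that is not needed for this statement. Your accuracy argument coincides with the paper's forward step: $H(j)=\tfrac1p\sum_{k=0}^{p-1}e^{2\pi i kj/p}=0$ for $j=1,\dots,p-1$, and with $N=0$ these are exactly the points $p^{N}A^{-1}\omega$, $\omega\in\Omega\setminus\{0\}$, so Theorem~\ref{th1.5} with $k=1$ applies.

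One caveat: the paper proves slightly more. After establishing $H(j)=0$, it computes, for $\mu\geq 1$,
\begin{equation*}
D^{\mu}H(j)=\frac{1}{p}\sum_{k=1}^{p-1}\left\lvert \frac{k}{p}\right\rvert_{p}^{\mu}\chi\!\left(\frac{k}{p},j\right)=p^{\mu-1}\sum_{k=1}^{p-1}e^{2\pi i kj/p}=-p^{\mu-1}\neq 0,
\end{equation*}
so that, by the converse half of Theorem~\ref{th1.5}, the accuracy cannot exceed $1$. Your proof establishes only that accuracy $1$ holds, not that it is exactly $1$; since the later results (Theorem~\ref{th4.3.4} and the following remark) lean on the ``only $1$'' reading, you should append this one-line computation to close that gap.
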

\begin{proof}
 Suppose that the scaling function $ \phi $\, that generates an MRA is given by 
 $$  \phi(x) = \sum_{k=0}^{p-1} h\left(\frac{k}{p}\right) \phi \left(\frac{x}{p}-\frac{k}{p} \right). $$ 
 Then by \eqref{eq1.9}, $$ H(\xi) = \frac{1}{p} \sum_{k=0}^{p-1}h\left(\frac{k}{p}\right)\chi\left(\frac{k}{p}, \xi\right). $$
 Now by theorem \ref{th1.2}, $ H(0) = 1 $\, and $ H(k) = 0 $\, for all $ k = 1,\ldots, p-1 $. Also, we have $ \sum_{k=0}^{p-1}h\left(\frac{k}{p}\right) = p $. 
 Thus we have a system of $ p $\, equations $ Uh = B $, where 
 $$ U = \frac{1}{\sqrt{p}}\begin{bmatrix}
                           1 & 1 & 1 & \cdots & 1 \\
                           1 & \chi\left(\frac{1}{p},1\right) & \chi\left(\frac{2}{p},1\right) & \cdots & \chi\left(\frac{p-1}{p},1\right) \\
                           1 & \chi\left(\frac{1}{p},2\right) & \chi\left(\frac{2}{p},2\right) & \cdots & \chi\left(\frac{p-1}{p},2\right) \\
                           \vdots & \vdots & \vdots & \ddots & \vdots \\
                           1 & \chi\left(\frac{1}{p},p-1\right) & \chi\left(\frac{2}{p},p-1\right) & \cdots & \chi\left(\frac{p-1}{p},p-1\right) \\
                         \end{bmatrix}, $$
 $$ h = \begin{bmatrix}
     h(0) \\
     h\left(\frac{1}{p}\right) \\
     h\left(\frac{2}{p}\right) \\
     \vdots \\
     h\left(\frac{p-1}{p}\right)
    \end{bmatrix} \text{ and }
B = \begin{bmatrix}
     \sqrt{p} \\
     0 \\
     0 \\
     \vdots \\
     0
    \end{bmatrix}
$$
Since $ \chi\left(\frac{k}{p},l\right),\, k,l = 0, 1, \ldots, p-1 $\, are the $ p^{\text{th}} $\, roots of unity, the matrix $ U $\, is unitary. Also, $ U $\, is symmetric. Hence $ U^{-1} = \bar{U} $. Then $ Uh = B $\, has a solution $ h = U^{-1}B $. That is,
$$ \begin{aligned}
    h & = \begin{bmatrix}
          h(0) \\
          h\left(\frac{1}{p}\right) \\
          h\left(\frac{2}{p}\right) \\
          \vdots \\
          h\left(\frac{p-1}{p}\right)
        \end{bmatrix} \\
    & = \frac{1}{\sqrt{p}}\begin{bmatrix}
                           1 & 1 & 1 & \cdots & 1 \\
                           1 & \overline{\chi\left(\frac{1}{p},1\right)} & \overline{\chi\left(\frac{2}{p},1\right)} & \cdots & \overline{\chi\left(\frac{p-1}{p},1\right)} \\
                           1 & \overline{\chi\left(\frac{1}{p},2\right)} & \overline{\chi\left(\frac{2}{p},2\right)} & \cdots & \overline{\chi\left(\frac{p-1}{p},2\right)} \\
                           \vdots & \vdots & \vdots & \ddots & \vdots \\
                           1 & \overline{\chi\left(\frac{1}{p},p-1\right)} & \overline{\chi\left(\frac{2}{p},p-1\right)} & \cdots & \overline{\chi\left(\frac{p-1}{p},p-1\right)} \\
                         \end{bmatrix}
    \begin{bmatrix}
     \sqrt{p} \\
     0 \\
     0 \\
     \vdots \\
     0
    \end{bmatrix}
    \end{aligned}
    $$
    Hence $ h\left(\frac{k}{p}\right) = 1 $, for all $ k = 0, 1, \ldots, p-1 $. \par 
    That is, $ H(\xi) = \frac{1}{p} \sum_{k=0}^{p-1} \chi(\frac{k}{p}, \xi) $. Since $ \sum_{k=0}^{p-1} \chi(\frac{k}{p}, \xi) = 0, \text{ for all } \xi = 1,2, \ldots, p-1 $, we have $ H(j) = 0,\text{ for all } \xi = 1,2, \ldots, p-1 $\, and for $ \mu > 1 $,
    $$ D^{\mu}H(\xi) = \frac{1}{p} \sum_{k=0}^{p-1} \left \lvert \frac{k}{p} \right \rvert_{p}^{\mu} \chi(\frac{k}{p}, \xi) = p^{\mu -1} \sum_{k=1}^{p-1} \chi(\frac{k}{p}, \xi). $$
     But $ \sum_{k=1}^{p-1} \chi(\frac{k}{p}, \xi) \neq 0, \text{ for all } \xi = 1,2, \ldots, p-1 $,$ D^{\mu}H(T^{-1}\omega) \neq 0 $, for all $ \omega \in \{ \frac{1}{p}, \frac{2}{p}, \ldots, \frac{p-1}{p} \},\, \mu \geq 0 $. Thus by theorem \ref{th1.5}, the accuracy of $ \phi $\, is 1.  
\end{proof}
\begin{remark}
 This scaling function $ \phi $\, is called the Haar scaling function.
\end{remark} 
\begin{theorem}\label{th4.3.4}
 Let $ \phi $\, be a Haar scaling function and $ \psi_{j}, \, j= 1,\ldots, p-1 $\, be the corresponding wavelets on $ \mathbb{Q}_{p} $. Then $ \psi_{j} $\, has only $ 1 $\, $ p $-vanishing moment for each $ j= 1,\ldots, p-1 $.
\end{theorem}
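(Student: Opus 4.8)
The plan is to reduce the statement to a finite computation with the wavelet filters by means of Theorem~\ref{th4.3.2}, and then to show that for each $j$ precisely the zeroth discrete moment of $h_j$ vanishes. By Theorem~\ref{th4.3.2}, $\psi_j$ has $k$ $p$-vanishing moments if and only if the filter $h_j$ has $k$ discrete $p$-vanishing moments, so it suffices to determine, for each $j=1,\ldots,p-1$, the largest $k$ for which $\sum_{a\in I_p}h_j(a)\lvert a\rvert_p^{\mu}=0$ holds for all $0\le\mu<k$. I would first record the Haar data furnished by Theorem~\ref{th4.3.3}: here $N=0$, the scaling filter is $h(k/p)=1$ for $k=0,1,\ldots,p-1$, and the wavelet filters $h_j$ are the remaining columns of the unitary matrix $U$ in \eqref{eq1.12}, given explicitly (the Kozyrev/DFT completion) by $h_j(k/p)=\chi(k/p,j)$, a $p$th root of unity; in particular $h_j(0)=\chi(0,j)=1\neq 0$. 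The only arithmetic input is that $\lvert 0\rvert_p=0$ while $\lvert k/p\rvert_p=p$ for every $k=1,\ldots,p-1$, since $k/p=p^{-1}k$ with $k$ not divisible by $p$.

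Next I would evaluate the two relevant discrete moments. For $\mu=0$, the orthogonality of the columns of $U$ (equivalently $\langle h_j,h\rangle=0$), or the character identity $\sum_{k=0}^{p-1}\chi(k/p,j)=0$, gives $\sum_{k=0}^{p-1}h_j(k/p)=0$; hence the zeroth discrete moment vanishes and each $\psi_j$ has at least one $p$-vanishing moment. For $\mu=1$, using $\lvert 0\rvert_p=0$ and $\lvert k/p\rvert_p=p$ together with the $\mu=0$ relation I would obtain
\[
 \sum_{k=0}^{p-1} h_j\!\left(\tfrac{k}{p}\right)\left\lvert \tfrac{k}{p}\right\rvert_p = p\sum_{k=1}^{p-1} h_j\!\left(\tfrac{k}{p}\right) = -\,p\,h_j(0) = -p \neq 0 .
\]
Thus the first discrete moment does not vanish, so $h_j$ has exactly one discrete $p$-vanishing moment, and by Theorem~\ref{th4.3.2} each $\psi_j$ has exactly one $p$-vanishing moment, as claimed.

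The main obstacle, and the only place where the specific Haar structure is essential, is the nonvanishing of the $\mu=1$ moment. The sum collapses to $-p\,h_j(0)$ precisely because the $k=0$ term drops out ($\lvert 0\rvert_p=0$) while all remaining terms carry the common factor $\lvert k/p\rvert_p=p$; the conclusion therefore rests entirely on $h_j(0)\neq 0$, which is where the explicit form of the Haar wavelet filters (roots of unity normalized so that $h_j(0)=1$) enters. As an independent check, one may compute the moments directly from $\psi_j(x)=\chi(\tfrac{j}{p},x)\,\phi(x)$, where $\phi$ is the indicator of $B_0(0)$, by decomposing $B_0(0)$ into the spheres $\{\lvert x\rvert_p=p^{-\gamma}\}$, on which $\lvert x\rvert_p^{\mu}$ is constant and $\chi(\tfrac{j}{p},\cdot)\equiv 1$ for $\gamma\ge 1$; summing the resulting geometric series yields $\int_{\mathbb{Q}_p}\psi_j\,dx=0$ and $\int_{\mathbb{Q}_p}\lvert x\rvert_p\,\psi_j\,dx=-\tfrac{1}{p+1}\neq 0$, confirming the count.
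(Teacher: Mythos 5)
Your reduction to discrete moments via Theorem~\ref{th4.3.2} and both of your moment computations are fine, but the proof hinges on a step that is not justified and is in fact where the real content of the theorem lies: the claim that the Haar wavelet filters are ``given explicitly (the Kozyrev/DFT completion) by $h_j(k/p)=\chi(k/p,j)$,'' so that $h_j(0)=1\neq 0$. In the paper's framework the ``corresponding wavelets'' are \emph{any} functions of the form \eqref{eq1.10} whose filters make the matrix $U$ of \eqref{eq1.12} unitary; this determines $(h_1,\ldots,h_{p-1})$ only up to right multiplication by an arbitrary unitary $(p-1)\times(p-1)$ matrix, which fixes the scaling column but mixes the values $h_j(0)$. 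Unitarity forces only the row identity $\lvert h(0)\rvert^{2}+\sum_{j=1}^{p-1}\lvert h_j(0)\rvert^{2}=p$, so individual $h_j(0)$ can perfectly well be zero. Your own computation shows exactly why this matters: the first discrete moment equals $-p\,h_j(0)$, so for a completion with $h_j(0)=0$ that wavelet has \emph{infinitely many} $p$-vanishing moments --- this is precisely the paper's Theorem~\ref{th4.4.2}. Hence $h_j(0)\neq 0$ cannot be read off from an assumed explicit formula; it is the fact that must be extracted from unitarity.

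The paper's proof does exactly that and never writes the filters explicitly: the zeroth moment vanishes because each $h_j$ is orthogonal to the constant scaling filter $h\equiv 1$ (condition \eqref{eq4.3.5}); then, assuming all the $\psi_j$ had more than one $p$-vanishing moment, the identity $\sum_{k}h_j(k/p)\lvert k/p\rvert_p^{\mu}=-p^{\mu}h_j(0)$ forces $h_j(0)=0$ for \emph{every} $j$, which contradicts the row orthonormality of $U$, namely $\lvert h(0)\rvert^{2}+\sum_{j}\lvert h_j(0)\rvert^{2}=p$ with $h(0)=1$. Note that this argument rules out only the possibility that all $p-1$ wavelets simultaneously have more than one moment (and, by Theorem~\ref{th4.4.2}, the stronger per-$j$ statement genuinely fails for some admissible completions). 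Your computation is correct and does give exactly one $p$-vanishing moment for each $j$, but only for the particular Kozyrev/DFT choice of wavelets, not for the general wavelets the theorem quantifies over; to repair the argument you would need to either derive the nonvanishing of the $h_j(0)$ from unitarity (as the paper does, and then only in the ``not all'' form) or restrict the statement to that specific completion.
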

\begin{proof}
 Here $ \psi_{j}(x) = \sum_{k=0}^{p-1} h_{j}\left(\frac{k}{p}\right) \phi\left(\frac{x}{p}-\frac{k}{p}\right) $, where $ h_{j} $\, are chosen so that the matrix $ U $\, in \eqref{eq1.12} is unitary. That is $ h_{j} $\, satisfies the condition
 \begin{equation}\label{eq4.3.5}
  \sum_{k=0}^{p-1} h\left(\frac{k}{p}\right)\overline{h_{j}\left(\frac{k}{p}\right)} = 0, \, \text{ for all } j = 1, \ldots, p-1.
 \end{equation}
 Since $ \phi $\, is the Haar scaling function, $ h\left(\frac{k}{p}\right) = 1 $, for all $ k=0,1,\ldots, p-1 $. Thus from \eqref{eq4.3.5}, we can conclude that $$ \sum_{k=0}^{p-1}h_{j}\left(\frac{k}{p}\right) = 0,\, \forall j =1,\ldots, p-1. $$ 
 \par 
Suppose that $ \psi_{j}, \, j= 1, \ldots, p-1 $\, have $ k $\, $ p $-vanishing moments for $ k>1 $. Then $ \sum_{k=0}^{p-1} h_{j}\left(\frac{k}{p}\right)\left \lvert \frac{k}{p} \right\rvert_{p}^{\mu} = 0,\, 0\leq \mu <k $. That is, $$ \sum_{k=0}^{p-1} h_{j}\left(\frac{k}{p}\right) = 0 \text{  and  }   p^{\mu} \sum_{k=1}^{p-1} h_{j}\left(\frac{k}{p}\right) = 0, \, \forall j = 1, \ldots, p-1,\, 0<\mu<k, $$
which is possible only when $ h_{j}(0) = 0 $, for all $ j =1, \ldots, p-1 $. \par 
Since the matrix $ U $\, in \eqref{eq1.12} is unitary, its rows are orthonormal. That is $ \lvert h(0) \rvert^{2}+ \sum_{j=1}^{p-1} \lvert h_{j}(0) \rvert^{2} = p $. This is a contradiction, since $ h_{j}(0) = 0 $, for all $ j =1, \ldots, p-1 $. \par 
Hence $ \psi_{j} $\, has only $ 1 $\, $ p $-vanishing moment for each $ j= 1,\ldots, p-1 $.
\end{proof}

 \begin{remark}
  \begin{enumerate}
   \item If $ \phi $\, be an orthogonal scaling function that generates a MRA and $ \text{supp}(\phi)\subset B_{N}(0),\, N>0 $. Then, by theorem \ref{th1.2}, $ \phi $\, doesn't satisfies the sum rules and $ \phi $\, has no accuracy.
   \item Let $ \phi $\, be a scaling function that generates a MRA and $ \text{supp}(\phi)\subset B_{0}(0) $. By theorems \ref{th4.3.3} and \ref{th4.3.4}, if $ \phi $\, has accuracy 1 then the corresponding wavelet has 1 $ p $-vanishing moment. But this cannot be true for scaling functions having support $ \subset B_{N}(0),\, N>0 $.
  \end{enumerate}
 \end{remark}

\begin{example}
 Let $ p=3,\, N=1 $. Set $ H(k) = 0 $\, if $ k $\, is not divisible by $ 3 $, and $ H(0) = 1,\, H(3) = H(6) = -1 $. Then 
 $$ \widehat{\phi}(\xi) = \begin{cases}
                           1, & \lvert \xi \rvert_{3} \leq \frac{1}{3}, \\
                           -1, & \lvert \xi -1 \rvert_{3} \leq \frac{1}{3}, \\
                           -1, & \lvert \xi -2 \rvert_{3} \leq \frac{1}{3}, \\
                           0, & \lvert \xi \rvert_{3} \geq 3.
                          \end{cases}
$$
Thus, the corresponding refinement equation is $ \phi(x) = \sum_{k=0}^{8} h(k/9) \phi(x/3-k/9) $\, where $ h(0) = h(3/9) = h(6/9) = -1/3 $, 
$ h(1/9) = h(2/9) = h(4/9) = h(5/9) = h(7/9) = h(8/9) = 2/3 $. That is 
\begin{equation}
 \phi(x) = \begin{cases}
                           \frac{-1}{3}, & \lvert x \rvert_{3} \leq 1, \\
                           \frac{2}{3}, & \lvert x -\frac{1}{3} \rvert_{3} \leq 1, \\
                           \frac{2}{3}, & \lvert x -\frac{2}{3} \rvert_{3} \leq 1, \\
                           0, & \lvert \xi \rvert_{3} \geq 9.
                          \end{cases}
\end{equation}
The corresponding wavelet functions are
$$ \psi_{1} = \sqrt{\frac{3}{2}}\left(\phi\left(\frac{x}{3}\right)-\phi\left(\frac{x}{3}-\frac{1}{3}\right)\right) $$ and
$$ \psi_{2} = \frac{1}{\sqrt{2}}\left(\phi\left(\frac{x}{3}\right)+\phi\left(\frac{x}{3}-\frac{1}{3}\right)-2\phi\left(\frac{x}{3}-\frac{2}{3}\right)\right). $$
That is 
$$ \psi_{1}(x) = \begin{cases}
                           -\sqrt{\frac{3}{2}}, & \lvert x \rvert_{3} \leq \frac{1}{3}, \\
                           \sqrt{\frac{3}{2}}, & \lvert x -1 \rvert_{3} \leq \frac{1}{3}, \\
                           -0, & \lvert x -2 \rvert_{3} \leq \frac{1}{3}, \\
                           0, & \lvert x \rvert_{3} \geq 3 \\
                          \end{cases} \text{ and }
\psi_{2}(x) = \begin{cases}
                           -\frac{1}{\sqrt{2}}, & \lvert x \rvert_{3} \leq \frac{1}{3}, \\
                           -\frac{1}{\sqrt{2}}, & \lvert x -1 \rvert_{3} \leq \frac{1}{3}, \\
                           \sqrt{2}, & \lvert x -2 \rvert_{3} \leq \frac{1}{3}, \\
                           0, & \lvert x \rvert_{3} \geq 3 \\
                          \end{cases}.
$$
Here $ \phi $\, has no accuracy. But $ \psi_{1} $\, and $ \psi_{2} $\, have $ 1 $\, $ p $-vanishing moment.
\end{example}

\section{\texorpdfstring{$ p $}p-Vanishing Moments of Haar-type Wavelets}

In contrast to the real case, the wavelet basis generated by the Haar MRA in the $ p $-adic case is not unique. In \cite{kss1}, the authors provides an explicit description about the family of wavelet functions generated by the Haar MRA in $ L_{2}(\mathbb{Q}_{p}) $.
\begin{theorem}\label{th4.4.1}\cite{kss1}
 Let $ \phi = 1_{B_{0}(0)} $\, be the indicator function of $ B_{0}(0) $\, and $$ \psi_{\nu}^{(0)}(x) = 
 \sum_{r=0}^{p-1} e^{2\pi i\nu r/p}\phi\left(\frac{x}{p}-\frac{r}{p}\right), \, x \in \mathbb{Q}_{p},\, \nu = 1,\ldots, p-1. $$ Then the set of all compactly supported wavelet functions on $ \mathbb{Q}_{p} $\, are given by 
 \begin{equation}\label{eq4.4.1}
  \psi_{j}(x) = \sum_{\nu=1}^{p-1} \sum_{k=0}^{p^{s}-1} \alpha_{\nu,k}^{j} \psi_{\nu}^{(0)}\left(x-\frac{k}{p^{s}}\right), \, x \in \mathbb{Q}_{p},\, j = 1,\ldots, p-1,
 \end{equation}
where $ s= 0,1,2, \ldots $, and 
\begin{equation}\label{eq4.4.2}
 \alpha_{\nu,k}^{j} = \begin{cases}
                         -p^{-s} \sum_{m=0}^{p^{s}-1} e^{-2\pi i \frac{\frac{-\nu}{p}+m}{p^{s}}k} \sigma_{j m} z_{j j}, & \text{ if } j = \nu , \\
                         p^{-2s} \sum_{m=0}^{p^{s}-1} \sum_{n=0}^{p^{s}-1}e^{-2\pi i \frac{\frac{-\nu}{p}+m}{p^{s}}k} \left(\frac{1-e^{ 2 \pi i\frac{j -\nu}{p}}}{e^{ 2\pi i \frac{\frac{j - \nu}{p}+m-n}{p^{s}}} -1}\right) \sigma_{\nu m} z_{\nu j}, & \text{ if } j \neq \nu,
                        \end{cases}
\end{equation}
$ \lvert \sigma_{j m} \rvert = 1 $\, and $ z_{j \nu} $\, are entries of a $ (p-1) \times (p-1) $\, unitary matrix $ Z $.
\end{theorem}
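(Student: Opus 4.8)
The plan is to pass to the Fourier side, where the Haar wavelet space becomes an explicit space of indicator functions, and then to classify the admissible coefficient arrays $\{\alpha_{\nu,k}^{j}\}$ through a unitarity condition on the associated masks.

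First I would compute the Fourier transforms of the building blocks. Since $\widehat{\phi}=1_{B_{0}(0)}$, a direct evaluation of the character sum defining $\psi_{\nu}^{(0)}$ shows that each $\widehat{\psi_{\nu}^{(0)}}$ is the indicator function of a ball $S_{\nu}\subset\{\xi:\lvert\xi\rvert_{p}=p\}$ of measure one, and that the $p-1$ balls $S_{1},\dots,S_{p-1}$ are pairwise disjoint and tile the shell $S:=B_{1}(0)\setminus B_{0}(0)$. Combined with Theorem \ref{th1.2}, this yields that $\{\psi_{\nu}^{(0)}(\cdot-a):\nu=1,\dots,p-1,\ a\in I_{p}\}$ is an orthonormal basis of the wavelet space $W_{0}:=V_{1}\ominus V_{0}$, whose Fourier image is exactly $L_{2}(S)$ and on which the shift by $a$ acts as multiplication by $\chi(a,\cdot)$.

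Next I would expand an arbitrary candidate in this basis. Any compactly supported $\psi_{j}\in W_{0}$ is a semi-convolution $\psi_{j}=\sum_{\nu=1}^{p-1}\psi_{\nu}^{(0)}*'c_{\nu}^{j}$ with $c_{\nu}^{j}\in l_{0}(I_{p})$, and compact support forces the shifts to lie among $\{k/p^{s}:0\le k<p^{s}\}$ for some common $s$, which explains the parameter $s$ and the double index $(\nu,k)$ of \eqref{eq4.4.1}. On the Fourier side this reads $\widehat{\psi_{j}}=\sum_{\nu}m_{\nu}^{j}\,1_{S_{\nu}}$ with $m_{\nu}^{j}(\xi)=\sum_{k}\alpha_{\nu,k}^{j}\chi(k/p^{s},\xi)$. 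Because the $S_{\nu}$ are disjoint and $\chi(k/p^{s},\cdot)$ restricted to $S_{\nu}$ takes only the values $e^{-2\pi i(-\nu/p+m)k/p^{s}}$ as the fine digits $m\in\{0,\dots,p^{s}-1\}$ of $\xi$ vary, each mask $m_{\nu}^{j}$ is a finite discrete Fourier transform in the address $m$. Testing the orthonormality relations \eqref{eq1.11} for $\{\psi_{j}(\cdot-a)\}$ against the characters $\chi(b,\cdot)$, $b\in I_{p}$, and integrating over the balls then reduces the whole problem to a unitarity requirement on the matrix of sampled mask values $m_{\nu}^{j}$ at the $p^{s}$ fine frequencies of each ball. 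Thus the classification reduces to describing all such unitary symbols and inverting the discrete Fourier transform over $k$ to recover $\alpha_{\nu,k}^{j}$.

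The hard part will be this inversion, and in particular the off-diagonal case $j\neq\nu$ of \eqref{eq4.4.2}. Because the shift set $I_{p}$ is not a group, the multiplication operators $\chi(k/p^{s},\cdot)$ do not diagonalize simultaneously across the different balls $S_{\nu}$, so the unitarity constraint genuinely couples masks belonging to distinct residues. I would resolve this by exhausting the diagonal freedom first, recording it as unimodular phases $\sigma_{\nu m}$ (one per ball and per fine address) together with a single constant $(p-1)\times(p-1)$ unitary $Z=(z_{j\nu})$ that mixes the $p-1$ balls, and then showing that the off-diagonal entries are uniquely forced. Carrying out the inverse transform over $k$ yields the diagonal $j=\nu$ formula directly, while the off-diagonal $j\neq\nu$ formula emerges only after summing the finite geometric progression that transports mass between $S_{\nu}$ and $S_{j}$; this is the origin of the factor $\bigl(1-e^{2\pi i(j-\nu)/p}\bigr)\big/\bigl(e^{2\pi i((j-\nu)/p+m-n)/p^{s}}-1\bigr)$. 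Finally, sufficiency -- that every array \eqref{eq4.4.2} produces an orthonormal, compactly supported wavelet basis -- follows by reversing the computation, using $\lvert\sigma_{\nu m}\rvert=1$ and the unitarity of $Z$ to verify directly that the sampled mask matrix is unitary and hence that \eqref{eq1.11} holds.
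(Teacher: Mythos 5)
Note first that the paper you were asked to match contains no proof of Theorem \ref{th4.4.1}: it is quoted verbatim from \cite{kss1}, so the only meaningful comparison is with the argument in that source, which is indeed Fourier-analytic and close in spirit to your outline. Your opening steps are correct and well argued: with the paper's conventions one gets $\widehat{\psi_{\nu}^{(0)}} = 1_{B_{0}(-\nu/p)}$, these $p-1$ unit balls $S_{\nu}$ are pairwise disjoint and tile the sphere $\{\lvert \xi \rvert_{p} = p\}$, the Fourier image of $W_{0}=V_{1}\ominus V_{0}$ is $L_{2}(S)$, and, because the supports $B_{0}(a)$, $a \in I_{p}$, are pairwise disjoint, compact support of $\psi_{j}$ really does force a finite expansion over shifts of the form $k/p^{s}$. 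This gives \eqref{eq4.4.1}. (One minor misattribution: Theorem \ref{th1.2} concerns orthonormality of scaling-function shifts and cannot be ``combined'' to yield the orthonormal-basis property of the Kozyrev system $\{\psi_{\nu}^{(0)}(\cdot - a)\}$; that property follows instead from the disjoint-ball picture you already have, or from Kozyrev's theorem.)

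The genuine gap is at the step that carries the entire content of the theorem, namely the explicit classification \eqref{eq4.4.2}. You correctly reduce matters to a finite system of constraints on the sampled mask values on the $p^{s}$ sub-balls of each $S_{\nu}$, and you correctly observe that these constraints couple different $\nu$ (the phases $\chi(b,-\nu/p)$ that appear when testing shift-orthonormality prevent the naive ball-by-ball polyphase argument from decoupling). But your resolution --- ``exhaust the diagonal freedom first, record it as phases $\sigma_{\nu m}$ and a single constant unitary $Z$, then show the off-diagonal entries are uniquely forced'' --- is an assertion of the answer rather than a derivation. Nothing in the proposal shows why the solution set of that coupled unitarity system is parameterized by exactly one unimodular phase per pair $(\nu,m)$ together with one constant $(p-1)\times(p-1)$ unitary matrix, nor why the inversion produces precisely the kernel $\bigl(1-e^{2\pi i (j-\nu)/p}\bigr)\big/\bigl(e^{2\pi i ((j-\nu)/p+m-n)/p^{s}}-1\bigr)$ in the case $j \neq \nu$; and the converse direction (that every array of the form \eqref{eq4.4.2} yields a wavelet family satisfying \eqref{eq1.11}) is likewise only promised, not checked. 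As it stands, your argument establishes the representation \eqref{eq4.4.1}, which is the easy half, and restates rather than proves \eqref{eq4.4.2}, which is the theorem.
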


\begin{example}
 Let $ s= 2 $\, and $ p =2 $. Then by \eqref{eq4.4.1}, the wavelet function is given by
 $$ \psi_{1}(x) = \sum_{k=0}^{3} \alpha_{1,k}^{1} \psi_{1}^{(0)}\left(x-\frac{k}{4}\right), \, x \in \mathbb{Q}_{p}. $$
 That is  \begin{equation*}
            \begin{split}
            \psi_{1}(x) = \, & \alpha_{1,0}^{1} \left(\phi\left(\frac{x}{2}\right)-\phi\left(\frac{x}{2}-\frac{4}{8}\right)\right) + \alpha_{1,1}^{1} \left(\phi\left(\frac{x}{2}-\frac{1}{8}\right)-\phi\left(\frac{x}{2}-\frac{5}{8}\right)\right) \\
             & + \alpha_{1,2}^{1} \left(\phi\left(\frac{x}{2}-\frac{2}{8}\right)-\phi\left(\frac{x}{2}-\frac{6}{8}\right)\right) + \alpha_{1,3}^{1} \left(\phi\left(\frac{x}{2}-\frac{3}{8}\right)-\phi\left(\frac{x}{2}-\frac{7}{8}\right)\right).
            \end{split}
          \end{equation*}
Here $ h_{1}(0) = \alpha_{1,0}^{1},\,  h_{1}(1/8) = \alpha_{1,1}^{1},\, h_{1}(2/8) = \alpha_{1,2}^{1},\,  h_{1}(3/8) = \alpha_{1,3}^{1},\, h_{1}(4/8) = -\alpha_{1,0}^{1},\,  h_{1}(5/8) = -\alpha_{1,1}^{1},\, h_{1}(6/8) = -\alpha_{1,2}^{1},\,  h_{1}(7/8) = -\alpha_{1,3}^{1} $. \par 
Then we have $ \sum_{k=0}^{7} h_{1}(k/8) = 0 $. That is $ \psi_{1} $\, has $ 1 $\, $ p $-vanishing moment. Also, 
\begin{equation*}
 \begin{aligned}
  \sum_{k=0}^{7} \left \lvert \frac{k}{8} \right\rvert_{2}^{\mu} h_{1}\left(\frac{k}{8}\right) = & 2^{3\mu}\left(h_{1}\left(\frac{1}{8}\right) + h_{1}\left(\frac{3}{8}\right) + h_{1}\left(\frac{5}{8}\right) + h_{1}\left(\frac{7}{8}\right)\right) \\ & + 2^{2\mu}\left(h_{1}\left(\frac{2}{8}\right)+h_{1}\left(\frac{6}{8}\right)\right)+2^{\mu} h_{1}\left(\frac{4}{8}\right) \\
  = & 8^{\mu}(\alpha_{1,1}^{1} + \alpha_{1,3}^{1} - \alpha_{1,1}^{1} - \alpha_{1,3}^{1}) + 4^{\mu} (\alpha_{1,2}^{1}-\alpha_{1,2}^{1}) - 2^{\mu} \alpha_{1,0}^{1} \\
  = & -2^{\mu} \alpha_{1,0}^{1}.
 \end{aligned}
\end{equation*}
Thus $$  \sum_{k=0}^{7} \left \lvert \frac{k}{8} \right\rvert_{2}^{\mu} h_{1}\left(\frac{k}{8}\right) = 0 \Leftrightarrow h_{1}(0) = 0,\, \mu>0. $$
That is $ \psi_{1} $\, has infinite number of $ p $-vanishing moments if and only if $ h_{1}(0) = 0 $.
\end{example}
\begin{example}
 Let $ s= 1 $\, and $ p =3 $. Then by \eqref{eq4.4.1}, the wavelet function is given by
 $$ \psi_{j}(x) = \sum_{\nu = 1}^{2} \sum_{k=0}^{2} \alpha_{\nu,k}^{j}\psi_{\nu}^{(0)}\left(x-\frac{k}{3}\right), \, x \in \mathbb{Q}_{p}, $$
 where $$ \psi_{1}^{(0)}(x) = \phi\left(\frac{x}{3}\right)+ \frac{-1+i\sqrt{3}}{2}\phi\left(\frac{x}{3}-\frac{1}{3}\right) + \frac{-1-i\sqrt{3}}{2} \phi\left(\frac{x}{3}-\frac{2}{3}\right), $$
 $$ \psi_{2}^{(0)}(x) = \phi\left(\frac{x}{3}\right)+ \frac{-1-i\sqrt{3}}{2}\phi\left(\frac{x}{3}-\frac{1}{3}\right) + \frac{-1+i\sqrt{3}}{2} \phi\left(\frac{x}{3}-\frac{2}{3}\right). $$
 That is, $$ h_{j}(0) = (\alpha_{1,0}^{j}+\alpha_{2,0}^{j}), \, h_{j}\left(\frac{1}{9}\right) = (\alpha_{1,1}^{j}+\alpha_{2,1}^{j}), \, h_{j}\left(\frac{2}{9}\right) = (\alpha_{1,2}^{j}+\alpha_{2,2}^{j}), $$
 $$ h_{j}\left(\frac{3}{9}\right) = \left(\alpha_{1,0}^{j}\frac{-1+i\sqrt{3}}{2} +\alpha_{2,0}^{j}\frac{-1-i\sqrt{3}}{2}\right) , $$
$$ h_{j}\left(\frac{4}{9}\right) = \left(\alpha_{1,1}^{j}\frac{-1+i\sqrt{3}}{2} +\alpha_{2,1}^{j} \frac{-1-i\sqrt{3}}{2}\right), $$
 $$ h_{j}\left(\frac{5}{9}\right) = \left(\alpha_{1,2}^{j}\frac{-1+i\sqrt{3}}{2} +\alpha_{2,2}^{j}\frac{-1-i\sqrt{3}}{2}\right), $$
 $$ h_{j}\left(\frac{6}{9}\right) = \left(\alpha_{1,0}^{j}\frac{-1-i\sqrt{3}}{2} +\alpha_{2,0}^{j}\frac{-1+i\sqrt{3}}{2}\right), $$
 $$ h_{j}\left(\frac{7}{9}\right) = \left(\alpha_{1,1}^{j}\frac{-1-i\sqrt{3}}{2} +\alpha_{2,1}^{j}\frac{-1+i\sqrt{3}}{2}\right), $$
 $$ h_{j}\left(\frac{8}{9}\right) = \left(\alpha_{1,2}^{j}\frac{-1-i\sqrt{3}}{2} +\alpha_{2,2}^{j}\frac{-1+i\sqrt{3}}{2}\right). $$
 Then we have $ \sum_{k=0}^{8} h_{j}(k/9) = 0 $. That is $ \psi_{j} $\, has $ 1 $\, $ p $-vanishing moment. Also, 
\begin{equation*}
 \begin{aligned}
  \sum_{k=0}^{8} \left \lvert \frac{k}{9} \right \rvert_{3}^{\mu} & h_{j}\left(\frac{k}{9}\right) \\ & = 3^{2\mu}\left(h_{j}\left(\frac{1}{9}\right) + h_{j}\left(\frac{2}{9}\right) + h_{j}\left(\frac{4}{9}\right) + h_{j}\left(\frac{5}{9}\right) + h_{j}\left(\frac{7}{9}\right)+h_{j}\left(\frac{8}{9}\right)\right)\\ & + 3^{\mu}\left( h_{j}\left(\frac{3}{9}\right) + h_{j}\left(\frac{6}{9}\right)\right) \\
  = & -3^{\mu}(\alpha_{1,0}^{j}+\alpha_{2,0}^{j}) .
 \end{aligned}
\end{equation*}
Thus $$  \sum_{k=0}^{8} \left \lvert \frac{k}{9} \right\rvert_{3}^{\mu} h_{j}\left(\frac{k}{9}\right) = 0 \Leftrightarrow h_{j}(0) = 0,\, \mu>0,\, j = 1,2. $$
That is $ \psi_{j} $\, has infinite number of $ p $-vanishing moments if and only if $ h_{j}(0) = 0 $.
\end{example}
\begin{remark}
 We can rewrite \eqref{eq4.4.1} as 
 $$  \begin{aligned}
      \psi_{j}(x) & = \sum_{\nu=1}^{p-1} \sum_{k=0}^{p^{s}-1} \alpha_{\nu,k}^{j} \psi_{\nu}^{(0)}\left(x-\frac{k}{p^{s}}\right) \\
      & = \sum_{\nu=1}^{p-1} \sum_{k=0}^{p^{s}-1} \alpha_{\nu,k}^{j} \sum_{r=0}^{p-1} e^{2\pi i\nu r/p}\phi\left(\frac{x}{p}-\frac{k}{p^{s+1}}-\frac{r}{p}\right) \\
      & = \sum_{l=0}^{p^{s+1}-1} h_{j}\left(\frac{l}{p^{s+1}}\right) \phi \left(\frac{x}{p} - \frac{l}{p^{s+1}}\right),
     \end{aligned}
$$
where $$ h_{j}\left(\frac{l}{p^{s+1}}\right) = h_{j}\left(\frac{k+rp^{s}}{p^{s+1}}\right) = \sum_{\nu=1}^{p-1} \alpha_{\nu,k}^{j} e^{2\pi i\nu r/p}, $$ for $ r = 0,1,\ldots, p-1,\, k = 0,1, \ldots, p^{s}-1 $.
\end{remark}
The following theorem gives a characterization for $ p $-vanishing moments of Haar-type wavelets.
\begin{theorem}\label{th4.4.2}
 Let $ \psi_{j}, \, j = 1,\ldots, p-1 $\, be the wavelet finctions described in theorem \ref{th4.4.1}. then $ \psi_{j} $\, always have $ 1 $\, $ p $-vanishing moment. Moreover, $ \psi_{j} $\, has infinite number of $ p $-vanishing moments if and only if $ h_{j}(0) = 0, \, j = 1,\ldots, p-1 $. 
\end{theorem}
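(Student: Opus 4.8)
The plan is to use Theorem \ref{th4.3.2} to convert the claim about $ p $-vanishing moments of $ \psi_j $ into a claim about the discrete $ p $-vanishing moments of the filters $ h_j $, and then to evaluate the sums $ \sum_{l=0}^{p^{s+1}-1} h_j(l/p^{s+1})\,\lvert l/p^{s+1}\rvert_p^{\mu} $ explicitly. The essential input is the formula recorded in the Remark preceding the theorem,
$$ h_j\left(\frac{k+rp^s}{p^{s+1}}\right) = \sum_{\nu=1}^{p-1} \alpha_{\nu,k}^{j}\, e^{2\pi i \nu r/p}, \qquad 0 \le k < p^s,\ 0 \le r < p, $$
together with the two root-of-unity identities $ \sum_{r=0}^{p-1} e^{2\pi i \nu r/p} = 0 $ and $ \sum_{r=1}^{p-1} e^{2\pi i \nu r/p} = -1 $, valid for $ \nu = 1,\ldots,p-1 $ since $ p $ is prime.

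First I would dispose of the case $ \mu = 0 $. Reindexing $ l = k + rp^s $ and summing over $ r $ first, the total $ \sum_l h_j(l/p^{s+1}) $ becomes $ \sum_{\nu=1}^{p-1}\sum_{k=0}^{p^s-1}\alpha_{\nu,k}^{j}\sum_{r=0}^{p-1} e^{2\pi i \nu r/p} $, which vanishes because the inner character sum is zero. By Definition \ref{df4.3.2} and Theorem \ref{th4.3.2}, this already shows that each $ \psi_j $ has at least one $ p $-vanishing moment.

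The heart of the argument is the case $ \mu > 0 $, where I would again write $ l = k + rp^s $ and split the outer sum according to whether $ k = 0 $. The key observation is a size estimate: for $ 1 \le k \le p^s-1 $ one has $ \lvert k\rvert_p \ge p^{-(s-1)} > p^{-s} \ge \lvert rp^s\rvert_p $, so by the ultrametric inequality $ \lvert k+rp^s\rvert_p = \lvert k\rvert_p $ is independent of $ r $. Hence $ \lvert l/p^{s+1}\rvert_p^{\mu} = (p^{s+1}\lvert k\rvert_p)^{\mu} $ factors out of the sum over $ r $, and the character sum $ \sum_{r=0}^{p-1} e^{2\pi i \nu r/p} $ annihilates every term with $ k \neq 0 $. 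Only the block $ k = 0 $ survives; there $ l = rp^s $, so $ \lvert l/p^{s+1}\rvert_p = \lvert r/p\rvert_p $, which is $ 0 $ for $ r = 0 $ and $ p $ for $ 1 \le r \le p-1 $. Using $ \sum_{r=1}^{p-1} e^{2\pi i \nu r/p} = -1 $, I would obtain
$$ \sum_{l=0}^{p^{s+1}-1} h_j\left(\frac{l}{p^{s+1}}\right)\left\lvert \frac{l}{p^{s+1}}\right\rvert_p^{\mu} = -p^{\mu}\sum_{\nu=1}^{p-1}\alpha_{\nu,0}^{j} = -p^{\mu}\, h_j(0), \qquad \mu > 0. $$

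Finally I would read off the conclusion. Since the right-hand side equals $ -p^{\mu} h_j(0) $ for every $ \mu > 0 $, the filter $ h_j $ has a second discrete $ p $-vanishing moment if and only if $ h_j(0) = 0 $, and in that case all higher discrete $ p $-vanishing moments vanish simultaneously; conversely, if $ h_j(0) \neq 0 $ then already the $ \mu = 1 $ condition fails, so exactly one $ p $-vanishing moment remains. Translating back through Theorem \ref{th4.3.2} yields that $ \psi_j $ has infinitely many $ p $-vanishing moments precisely when $ h_j(0) = 0 $. I expect the only delicate point to be the size bookkeeping for $ \lvert k+rp^s\rvert_p $ that decouples the character sum from the absolute-value factor when $ k \neq 0 $; once that is in place, the whole computation collapses onto the two elementary root-of-unity identities.
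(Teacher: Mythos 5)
Your proposal is correct and follows essentially the same route as the paper's proof: the same reindexing $ l = k + rp^{s} $ of the filter coefficients from the Remark, the same two root-of-unity identities, and the same collapse of the weighted sum to $ -p^{\mu}h_{j}(0) $ for $ \mu > 0 $, with the conversion to statements about $ \psi_{j} $ via Theorem \ref{th4.3.2}. The only differences are presentational: you decouple the absolute value from the character sum by the ultrametric equality $ \lvert k + rp^{s} \rvert_{p} = \lvert k \rvert_{p} $, where the paper instead partitions the indices $ k \neq 0 $ into the sets $ M_{\gamma} $ according to their lowest nonzero digit (the same decoupling in digit form), and your final value $ -p^{\mu}h_{j}(0) $ is in fact the correct one, the paper's last line dropping the factor $ p^{\mu} $ in a harmless typo.
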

\begin{proof}
 We have $$ \psi_{j}(x) =  \sum_{l=0}^{p^{s+1}-1} h_{j}\left (\frac{l}{p^{s+1}}\right) \phi\left(\frac{x}{p} - \frac{l}{p^{s+1}}\right), $$
 where $$ h_{j}\left(\frac{l}{p^{s+1}}\right) = h_{j}\left(\frac{k+rp^{s}}{p^{s+1}}\right) = \sum_{\nu=1}^{p-1} \alpha_{\nu,k}^{j} e^{2\pi i\nu r/p}, $$ for $ r = 0,1,\ldots, p-1,\, k = 0,1, \ldots, p^{s}-1 $.
 Then 
 $$ \begin{aligned}
     \sum_{l=0}^{p^{s+1}-1} h_{j}\left(\frac{l}{p^{s+1}}\right) & = \sum_{k=0}^{p^{s}-1} \sum_{r=0}^{p-1} \sum_{\nu=1}^{p-1} \alpha_{\nu,k}^{j} e^{2\pi i\nu r/p} \\
     & = \sum_{k=0}^{p^{s}-1} \sum_{\nu=1}^{p-1} \alpha_{\nu,k}^{j} \left(\sum_{r=0}^{p-1}e^{2\pi i\nu r/p}\right) \\
     & = \sum_{k=0}^{p^{s}-1} \sum_{\nu=1}^{p-1} \alpha_{\nu,k}^{j} \times 0 \\
     & = 0,
    \end{aligned}
$$
for $ j = 1,\ldots, p-1 $. That is $ \psi_{j} $\, always have $ 1 $\, $ p $-vanishing moment. \par 
We can write any $ \frac{k}{p^{s+1}} $, for $ k = 0,1,2, \ldots, p^{s}-1 $\, in the form 
$$ \frac{k}{p^{s+1}} = \frac{k_{0}}{p^{s+1}} + \frac{k_{1}}{p^{s}} + \cdots + \frac{k_{s-1}}{p^{2}}, \, k_{0},k_{1},\ldots, k_{s-1} \in \{ 0,1,\ldots, p-1\}. $$
Let $ M_{\gamma} $\, for $ \gamma = 2,\ldots, s $\, be the set given by, 
\begin{equation*}
 \begin{split}
  M_{\gamma} = \{ k = k_{0}+k_{1}p & + k_{2}p^{2}+ \cdots k_{s-1}p^{s-1} : k_{0} = k_{1} = \cdots = k_{s-\gamma} = 0,\\ & k_{s-\gamma+1}, k_{s-\gamma+2},\ldots, k_{s-1} = 0,1,\ldots, p-1 ,\, k_{s-\gamma+1} \neq 0\},
 \end{split}
\end{equation*}  
and $$ M_{s+1} = \{ k = k_{0}+k_{1}p + k_{2}p^{2}+ \cdots k_{s-1}p^{s-1} : k_{0}, k_{1}, \ldots, k_{s-1} = 0,1,\ldots, p-1 ,\, k_{0} \neq 0\}. $$ Then 
$$ \begin{aligned}
     \sum_{l=1}^{p^{s+1}-1} \left \lvert \frac{l}{p^{s+1}} \right \rvert_{p}^{\mu} h_{j}\left (\frac{l}{p^{s+1}}\right) = & \sum_{k=0}^{p^{s}-1} \sum_{r=0}^{p-1}\left \lvert \frac{k+rp^{s}}{p^{s+1}} \right \rvert_{p}^{\mu}h_{j}\left(\frac{k+rp^{s}}{p^{s+1}}\right)\\ = & \sum_{r=1}^{p-1}p^{\mu}\sum_{\nu=1}^{p-1}\alpha_{\nu,0}^{j} e^{2\pi i\nu r/p} \\ & +  \sum_{k=1}^{p^{s}-1} \sum_{r=0}^{p-1} \sum_{\nu=1}^{p-1} \left \lvert \frac{k+rp^{s}}{p^{s+1}} \right \rvert_{p}^{\mu}\alpha_{\nu,k}^{j} e^{2\pi i\nu r/p}\\
      = & \, p^{\mu} \sum_{\nu=1}^{p-1}\alpha_{\nu,0}^{j}\left( \sum_{r=1}^{p-1}e^{2\pi i\nu r/p} \right) \\
      & + p^{2\mu} \sum_{k \in M_{2}} \sum_{\nu = 1}^{p-1}\alpha_{\nu,k}^{j} \left( \sum_{r=0}^{p-1}e^{2\pi i\nu r/p} \right) \\ & + p^{3\mu} \sum_{k \in M_{3}} \sum_{\nu = 1}^{p-1}\alpha_{\nu,k}^{j} \left( \sum_{r=0}^{p-1}e^{2\pi i\nu r/p} \right) \\ & + \cdots + p^{(s+1)\mu} \sum_{k \in M_{s+1}} \sum_{\nu = 1}^{p-1}\alpha_{\nu,k}^{j} \left( \sum_{r=0}^{p-1}e^{2\pi i\nu r/p} \right) \\ = & \, -p^{\mu} \sum_{\nu=1}^{p-1}\alpha_{\nu,0}^{j} = -h_{j}(0).
   \end{aligned}
$$
Thus, $ \psi_{j} $\, has infinite number of $ p $-vanishing moments if and only if $ h_{j}(0) = 0, \, j = 1,\ldots, p-1 $. 
\end{proof}

\section{\texorpdfstring{$ p $}p-Vanishing Moments of non-Haar type Wavelets}

In \cite{ks}, wavelet bases different from those described above were constructed; these bases were called non-Haar bases.
\begin{theorem}\label{th4.5.1}\cite{ks}
 Let \begin{equation}\label{eq4.5.1}
      \begin{split}
      J_{p,m} = \{ s = p^{-m}(s_{0}+s_{1}p+s_{2}p^{2}+ \cdots & + s_{m-1}p^{m-1}) : s_{j} = 0,1,\ldots, p-1;\\ & j = 0,1,\ldots, m-1;\, s_{0} \neq 0 \},
      \end{split}
     \end{equation}
where $ m \geq 1 $\, is a fixed positive integer. Consider the set of $ ( p - 1 ) p^{m - 1} $\, functions 
\begin{equation}\label{eq4.5.2}
 \theta_{s}^{(m)} (x) = \chi(x,s)1_{B_{0}(0)}(x), \quad s \in J_{p,m},\, x \in \mathbb{Q}_{p},
\end{equation}
and the family of functions generated by their dilations and translations:
\begin{equation}\label{eq4.5.3}
 \theta_{s;ja}^{(m)} (x) = p^{-j/2} \theta_{s}^{(m)}(p^{j}x-a), \quad s \in J_{p,m},\, x \in \mathbb{Q}_{p},\, j \in \mathbb{Z},\, a \in I_{p},
\end{equation}
where $ 1_{B_{0}(0)} $\, is the indicator function of $ B_{0}(0) $. Then the functions \eqref{eq4.5.3} form an orthonormal 
$ p $-adic wavelet basis in $ L_{2}(\mathbb{Q}_{p}) $\, which is non-Haar type for $ m\geq 2 $.
\end{theorem}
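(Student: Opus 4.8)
The plan is to move the whole problem to the Fourier side, where the Fourier transform is a unitary bijection of $L_2(\mathbb{Q}_p)$ (Plancherel), so that it suffices to prove that $\{\widehat{\theta_{s;ja}^{(m)}}\}$ is an orthonormal basis. First I would record the basic transform: since $B_0(0)$ is a self-dual compact open subgroup, $\int_{B_0(0)}\chi(x,\eta)\,dx=1_{B_0(0)}(\eta)$, and hence $\widehat{\theta_s^{(m)}}(\xi)=\int_{B_0(0)}\chi(x,s+\xi)\,dx=1_{B_0(0)}(s+\xi)$, i.e. $\widehat{\theta_s^{(m)}}$ is the indicator of the ball $B_0(-s)$. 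A routine change of variables in \eqref{eq4.5.3} then gives
$$ \widehat{\theta_{s;ja}^{(m)}}(\xi)=p^{j/2}\,\chi\!\left(a,p^{-j}\xi\right)1_{p^{j}B_0(-s)}(\xi), $$
so $\widehat{\theta_{s;ja}^{(m)}}$ is supported on the ball $p^{j}B_0(-s)$ and, on that ball, is the normalising constant $p^{j/2}$ times a character in the shift parameter $a$.

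The heart of the argument is a tiling statement: as $(s,j)$ runs over $J_{p,m}\times\mathbb{Z}$, the balls $p^{j}B_0(-s)$ partition $\mathbb{Q}_p\setminus\{0\}$ up to a null set. To prove it I would first note that every $s\in J_{p,m}$ has $\lvert s\rvert_p=p^{m}$ (the digit $s_0\neq0$ makes $s_0+\cdots+s_{m-1}p^{m-1}$ a $p$-adic unit), so by the ultrametric inequality $B_0(-s)\subset\{\xi:\lvert\xi\rvert_p=p^{m}\}$. Next, for distinct $s,s'\in J_{p,m}$ the integer $\sum_{i=0}^{m-1}(s_i-s_i')p^{i}$ is nonzero with $p$-adic valuation at most $m-1$, whence $\lvert s-s'\rvert_p\geq p>1$ and the radius-one balls $B_0(-s),B_0(-s')$ are disjoint. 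Since there are $\lvert J_{p,m}\rvert=(p-1)p^{m-1}$ such balls, each of measure $1$, and the sphere $\{\lvert\xi\rvert_p=p^{m}\}$ has measure $p^{m}-p^{m-1}=(p-1)p^{m-1}$, the disjoint balls exactly exhaust that sphere. Applying the dilation $\xi\mapsto p^{j}\xi$ carries this sphere to $\{\lvert\xi\rvert_p=p^{m-j}\}$, and letting $j$ range over $\mathbb{Z}$ sweeps out every sphere; as these partition $\mathbb{Q}_p\setminus\{0\}$, the tiling follows.

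With the tiling in hand the orthonormal basis property splits into three checks. Orthogonality between functions with different index pairs $(s,j)$ is immediate because their Fourier transforms have disjoint supports. For a fixed $(s,j)$, orthonormality in $a$ reduces, after the substitution $\eta=p^{-j}\xi$ and a shift by $-s$, to $\int_{B_0(0)}\chi(a-a',\eta)\,d\eta=1_{B_0(0)}(a-a')$; since $I_p$ is a complete set of coset representatives for $\mathbb{Q}_p/B_0(0)$, this equals $\delta_{a,a'}$, while the factor $p^{j/2}$ together with $\mu\!\left(p^{j}B_0(-s)\right)=p^{-j}$ makes each function a unit vector. The same identification yields completeness: for fixed $(s,j)$ the characters $\{\chi(a,\cdot):a\in I_p\}$ restrict to an orthonormal basis of $L_2(B_0(0))$ by Peter--Weyl for the compact group $B_0(0)$, so $\{\widehat{\theta_{s;ja}^{(m)}}:a\in I_p\}$ is an orthonormal basis of $L_2\!\left(p^{j}B_0(-s)\right)$, and summing over the tiling pieces recovers all of $L_2(\mathbb{Q}_p)$. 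Finally, the non-Haar assertion for $m\geq2$ is a classification remark: the system has $(p-1)p^{m-1}$ generating wavelets $\theta_s^{(m)}$, each taking $p^{m}$ distinct values on $B_0(0)$, whereas the unique Haar MRA of Theorem \ref{th1.3} produces only $p-1$ generators that are constant on cosets of $pB_0(0)$; hence for $m\geq2$ the basis cannot coincide with a Haar basis, the detailed verification being carried out in \cite{ks}.

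I expect the main obstacle to be the tiling lemma, specifically the pairwise disjointness of the balls $B_0(-s)$ combined with the measure count that forces an exact cover of the sphere. Once that metric--combinatorial fact is secured, unitarity of the Fourier transform and the Peter--Weyl decomposition of $L_2(B_0(0))$ render the remaining steps routine.
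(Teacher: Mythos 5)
The paper never proves this theorem: it is quoted verbatim from \cite{ks}, so there is no internal argument to compare yours against, and your proposal must be judged on its own merits. On the main assertion (orthonormal basis) your Fourier-side argument is correct and self-contained: the identity $\int_{B_0(0)}\chi(x,\eta)\,dx=1_{B_0(0)}(\eta)$ and hence $\widehat{\theta_s^{(m)}}=1_{B_0(-s)}$; the formula $\widehat{\theta_{s;ja}^{(m)}}(\xi)=p^{j/2}\chi(a,p^{-j}\xi)1_{p^{j}B_0(-s)}(\xi)$; disjointness of $B_0(-s)$, $B_0(-s')$ for distinct $s,s'\in J_{p,m}$ via the valuation of the digit difference; the measure count showing the $(p-1)p^{m-1}$ unit balls exhaust the sphere $\{\lvert\xi\rvert_p=p^{m}\}$ (the leftover is an open null set, hence empty); the identification of $\{\chi(a,\cdot)\!\mid_{B_0(0)}: a\in I_p\}$ with the full dual group of the compact group $B_0(0)$; and Plancherel to pull the basis property back. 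One cosmetic omission: the shift by $-s$ in the fixed-$(s,j)$ inner product produces a phase $\chi(a-a',-s)$ which you should carry along; it is harmless because the indicator factor already vanishes unless $a=a'$.

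The one genuine soft spot is the closing claim ``non-Haar type for $m\geq 2$.'' As stated, your argument compares \emph{generators} (their number and their value counts), but two different generator sets can in principle generate the same system of dilates and translates, so counting generators is not conclusive; moreover, by Theorem \ref{th4.4.1} the Haar MRA also produces wavelets involving translates by $k/p^{s}$, and such combinations can take many distinct values, so a raw value count does not separate the two families either. What does work is a dilation--translation \emph{invariant}: every Haar-type wavelet is constant on cosets of $pB_0(0)$, so a putative identity $\theta_s^{(m)}=c\,\psi_l(p^{-j}\cdot-a)$ forces, after matching supports and exact local-constancy scales, the support of $\psi_l$ into a single ball of radius at most $p^{-1}$ on which $\psi_l$ is constant, contradicting orthogonality of $\psi_l$ to the shifts of $1_{B_0(0)}$. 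You explicitly defer this verification to \cite{ks}, which is defensible since the statement itself is a citation, but be aware that this sentence, as literally written, is the one non-rigorous step in an otherwise complete proof.
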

\begin{theorem}\label{th4.5.2}\cite{ks}
 For any fixed $ \nu = 1 , 2 , \ldots $, the functions
 \begin{equation}\label{eq4.5.4}
  \psi_{s}^{(m),\nu} (x) = \sum_{k=0}^{p^{\nu}-1} \alpha_{s,k} \theta_{s}^{(m)}\left(x-\frac{k}{p^{\nu}}\right), \quad s \in J_{p,m},
 \end{equation}
are wavelet functions if and only if
\begin{equation}\label{eq4.5.5}
 \alpha_{s,k} = p^{-\nu} \sum_{r=0}^{p^{\nu}-1} \gamma_{s,r} e^{-2 \pi i \frac{-s+r}{p^{\nu}} k},
\end{equation}
where $ \gamma_{s,k} \in \mathbb{C},\, \lvert \gamma_{s,k} \rvert = 1, \, k = 0,1,\ldots, p^{\nu}-1, \, s \in J_{p,m} $.
\end{theorem}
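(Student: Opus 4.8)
The statement characterises exactly which coefficient vectors $(\alpha_{s,k})_{k=0}^{p^{\nu}-1}$ turn the finite sums \eqref{eq4.5.4} into generators of an orthonormal wavelet basis, so my plan is to pass to the Fourier side and reduce everything to a finite discrete Fourier computation on each ``frequency band'' $B_0(-s)$. First I would compute, using $\widehat{f}(\xi)=\int f(x)\chi(x,\xi)\,dx$ and \eqref{eq4.5.2}, that $\widehat{\theta_s^{(m)}}(\xi)=1_{B_0(0)}(s+\xi)=1_{B_0(-s)}(\xi)$, so that the translate by $k/p^{\nu}\in I_p$ carries the modulation factor $\chi(k/p^{\nu},\xi)$ and hence
\begin{equation*}
 \widehat{\psi_s^{(m),\nu}}(\xi)=m_s(\xi)\,1_{B_0(-s)}(\xi),\qquad m_s(\xi)=\sum_{k=0}^{p^{\nu}-1}\alpha_{s,k}\,\chi\!\left(\tfrac{k}{p^{\nu}},\xi\right).
\end{equation*}
Because distinct $s,s'\in J_{p,m}$ satisfy $|s-s'|_p\ge p$, the balls $B_0(-s)$ are pairwise disjoint; by Parseval this makes the subspaces generated by different $s$ mutually orthogonal and decouples the whole problem into one band at a time.

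Next I would fix $s$ and substitute $\xi=-s+\eta$ with $\eta\in B_0(0)$. A short computation gives $\chi(k/p^{\nu},-s)=e^{-2\pi i sk/p^{\nu}}$ and shows that $\chi(k/p^{\nu},\eta)$ depends only on the coset of $\eta$ in $B_0(0)/B_{-\nu}(0)$, taking the value $e^{2\pi i kl/p^{\nu}}$ on the $l$-th coset, $l=0,\dots,p^{\nu}-1$. Hence $M_s(\eta):=m_s(-s+\eta)$ is a step function, constant on each of the $p^{\nu}$ cosets with value
\begin{equation*}
 \beta_{s,l}=\sum_{k=0}^{p^{\nu}-1}\alpha_{s,k}\,e^{-2\pi i sk/p^{\nu}}\,e^{2\pi i kl/p^{\nu}}.
\end{equation*}
Under the Fourier transform, and after this substitution, the band space generated by $\{\theta_s^{(m)}(\cdot-a):a\in I_p\}$ is identified with $L_2(B_0(0))$; the generators $\theta_s^{(m)}(\cdot-a)$ go to the character system $\{\chi(a,\cdot)\}$, which is an orthonormal basis of $L_2(B_0(0))$, and the translates $\psi_s^{(m),\nu}(\cdot-a)$ go to $\{M_s\,\chi(a,\cdot)\}$.

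The crux is then the elementary fact that multiplication by $M_s$ is a unitary operator on $L_2(B_0(0))$ if and only if $|M_s|=1$ almost everywhere, and that a unitary carries one orthonormal basis to another. Using the orthogonality relation $\int_{B_0(0)}\chi(a-a',\eta)\,d\eta=\delta_{a,a'}$ for $a,a'\in I_p$ (valid even though $I_p$ is not a group, since $\chi(a-a',\cdot)$ is a nontrivial character of $B_0(0)$ unless $a=a'$), this shows that $\{\psi_s^{(m),\nu}(\cdot-a)\}$ is an orthonormal basis of the band space precisely when $|\beta_{s,l}|=1$ for every $l$. Since the scale-zero detail space is the orthogonal sum of the band spaces over $s\in J_{p,m}$ and dilation is unitary, this single-scale statement upgrades to the assertion that the \eqref{eq4.5.3}-type dilates and translates of $\psi_s^{(m),\nu}$ form an orthonormal wavelet basis of $L_2(\mathbb{Q}_p)$, i.e.\ that the $\psi_s^{(m),\nu}$ are wavelet functions.

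Finally I would match this condition to \eqref{eq4.5.5}. Substituting the proposed form $\alpha_{s,k}=p^{-\nu}\sum_{r}\gamma_{s,r}e^{-2\pi i\frac{-s+r}{p^{\nu}}k}$ into the expression for $\beta_{s,l}$ and using $\sum_{k=0}^{p^{\nu}-1}e^{2\pi i(l-r)k/p^{\nu}}=p^{\nu}\delta_{l,r}$ collapses everything to $\beta_{s,l}=\gamma_{s,l}$; thus $|\beta_{s,l}|=1$ for all $l$ is literally $|\gamma_{s,l}|=1$. Conversely, given wavelet functions the equalities $|\beta_{s,l}|=1$ let me set $\gamma_{s,l}:=\beta_{s,l}$ and recover \eqref{eq4.5.5} by inverting this finite Fourier transform in $k$. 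The only real obstacle I anticipate is the $p$-adic bookkeeping---verifying the two character identities and the coset-constancy of $M_s$ cleanly, and confirming that the character system indexed by $I_p$ is a complete orthonormal set on $B_0(0)$; once these are in place the argument is a one-band unitary change of basis followed by a discrete Fourier inversion.
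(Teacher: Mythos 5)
Your proposal cannot be compared with a proof in the paper, because the paper does not contain one: Theorem \ref{th4.5.2} is imported from the reference \cite{ks} and is stated without proof, serving only as background for the results of Section 6. Judged on its own terms, your argument is correct and essentially complete. The computation $\widehat{\theta_s^{(m)}}(\xi)=1_{B_0(-s)}(\xi)$, the pairwise disjointness of the balls $B_0(-s)$ (distinct $s,s'\in J_{p,m}$ satisfy $\lvert s-s'\rvert_p\ge p$), the identification of each band with $L_2(B_0(0))$ via the complete orthonormal character system $\{\chi(a,\cdot)\}_{a\in I_p}$, the coset-constancy of $M_s$ with values $\beta_{s,l}$, and the collapse $\beta_{s,l}=\gamma_{s,l}$ under discrete Fourier inversion all check out exactly as you describe, and the inversion in the converse direction reproduces \eqref{eq4.5.5} with $\gamma_{s,l}:=\beta_{s,l}$. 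Three points should be made explicit in a final write-up. First, the ``upgrade'' from the single-scale statement to the full wavelet-basis property is not free: the dilated copies of the scale-zero detail space are mutually orthogonal with dense union in $L_2(\mathbb{Q}_p)$ only because Theorem \ref{th4.5.1} already asserts that the system \eqref{eq4.5.3} is an orthonormal basis, so your proof is conditional on that cited completeness, which is the natural way to proceed here. Second, the substitution $\xi=-s+\eta$ puts the constant phases $\chi(a,-s)$ on the generators; these are harmless for orthonormality and spans but deserve a sentence. Third, in the ``only if'' direction you need that mere orthonormality of $\{M_s\chi(a,\cdot)\}_{a\in I_p}$ forces $\lvert M_s\rvert=1$ a.e.; this follows by testing against the translates $a=l'/p^{\nu}\in I_p$, $0\le l'<p^{\nu}$, which yields
\begin{equation*}
\sum_{l=0}^{p^{\nu}-1}\lvert\beta_{s,l}\rvert^{2}e^{2\pi i ll'/p^{\nu}}=p^{\nu}\delta_{l',0},
\end{equation*}
and hence $\lvert\beta_{s,l}\rvert=1$ for all $l$ by inverse discrete Fourier transform; spelling this out closes the one place where your sketch says ``this shows'' without exhibiting the mechanism.
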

 The following two theorems gives the connection between $ p $-vanishing moment of non-Haar type wavelet functions and the approximation order of the indicator function of the compact open subgroup $ B_{0}(0) $\, of $ \mathbb{Q}_{p} $.
\begin{theorem}\label{th4.5.3}
 Let $ \phi = 1_{B_{0}(0)} $\, and $ \theta_{s}^{(m)} $\, be defined by \eqref{eq4.5.3}. If $ \phi $\, has approximation order $ k $, then $ \theta_{s}^{(m)} $\, has $ k $\, $ p $-vanishing moments. 
\end{theorem}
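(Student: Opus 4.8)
The plan is to reduce the claim to the pseudo-differential characterization of $p$-vanishing moments in Theorem \ref{th4.3.1} and then feed in the approximation-order hypothesis through Theorem \ref{th1.4}. By Theorem \ref{th4.3.1}, $\theta_s^{(m)}$ has $k$ $p$-vanishing moments precisely when $D^{\mu}\widehat{\theta_s^{(m)}}(0) = 0$ for every $0 \le \mu < k$, so the entire argument will amount to computing these values and showing they vanish.

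First I would use the identity $\theta_s^{(m)}(x) = \chi(x,s)\,1_{B_0(0)}(x) = \chi(x,s)\phi(x)$ together with the formula $(D^{\mu}\widehat{f})(\xi) = \int_{\mathbb{Q}_p} |x|_p^{\mu} f(x)\chi(\xi,-x)\,dx$ derived inside the proof of Theorem \ref{th4.3.1}. Evaluating at $\xi = 0$, where $\chi(0,-x)=1$, collapses this to
$$ (D^{\mu}\widehat{\theta_s^{(m)}})(0) = \int_{\mathbb{Q}_p} |x|_p^{\mu} \chi(x,s)\phi(x)\,dx = \int_{B_0(0)} |x|_p^{\mu} \chi(x,s)\,dx, $$
so the target reduces to showing that this last integral is $0$ for $0 \le \mu < k$.

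Next I would bring in the hypothesis. Since $\text{supp}(\phi) \subset B_0(0)$ we are in the case $N = 0$ of Theorem \ref{th1.4}, so $\phi$ providing approximation order $k$ is equivalent to $D^{\mu}\widehat{\phi}(\alpha) = 0$ for all $\alpha \in I_p\setminus\{0\}$ and $0 \le \mu \le k-1$; unwinding the same integral formula (and using the symmetry of $\chi$), this reads $\int_{B_0(0)} |x|_p^{\mu} \chi(x,-\alpha)\,dx = 0$. The remaining task is purely to match the frequency $s$ against one of these admissible $-\alpha$. The key observation is that for $x \in B_0(0)$ the character $\chi(x,\eta)$ depends only on the coset $\eta + B_0(0)$: if $z \in B_0(0)$ then $|xz|_p \le 1$, hence $\{xz\}_p = 0$ and $\chi(x,z)=1$. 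Because $|s|_p = p^{m} > 1$ the fractional part $\alpha := \{-s\}_p$ is a genuine nonzero element of $I_p$, and $s + \alpha \in B_0(0)$, so $\chi(x,s) = \chi(x,-\alpha)$ throughout $B_0(0)$. Substituting gives $\int_{B_0(0)} |x|_p^{\mu} \chi(x,s)\,dx = D^{\mu}\widehat{\phi}(\alpha) = 0$, which finishes the proof through Theorem \ref{th4.3.1}.

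I expect the only delicate point to be this last coset-matching step: one must verify carefully that $\{-s\}_p$ lands in $I_p\setminus\{0\}$ and that reducing $\chi(x,s)$ modulo $B_0(0)$ is legitimate, since the approximation-order condition is phrased in terms of the shift set $I_p$ rather than arbitrary frequencies. The $\mu = 0$ case deserves a separate glance — there $|x|_p^{0} = 1$ and the integral is just $\widehat{\phi}(s)$, which vanishes because $s \notin B_0(0)$ — but it fits the same pattern. Everything else is routine manipulation of the Fourier and pseudo-differential formulas already established in the excerpt.
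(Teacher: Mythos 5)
Your proof is correct and takes essentially the same approach as the paper: both arguments reduce the claim to showing $D^{\mu}\widehat{\theta_{s}^{(m)}}(0)=0$ for $0\leq\mu\leq k-1$ via Theorem \ref{th4.3.1}, and then feed in the hypothesis through Theorem \ref{th1.4} (in the case $N=0$). The only difference is the final step: the paper computes $\widehat{\theta_{s}^{(m)}}(\xi)=\widehat{\phi}(\xi+s)$ and evaluates $D^{\mu}\widehat{\phi}$ at $s$ itself, using that $J_{p,m}\subset I_{p}\setminus\{0\}$, so your coset-matching substitution of $\alpha=\{-s\}_{p}$ for $s$ is a harmless but unnecessary detour (equivalent to the paper's step up to the sign convention in the formula for $D^{\mu}\widehat{f}$) rather than the delicate point you feared.
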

 \begin{proof}
  We have for a fixed integer $ m > 2 $, 
$$ \theta_{s}^{(m)} (x) = \chi(x,s)1_{B_{0}(0)}(x), \quad s \in J_{p,m},\, x \in \mathbb{Q}_{p}. $$
That is, $ \theta_{s}^{(m)} (x) = \chi(x,s)\phi(x) $. Now, 
$$ \begin{aligned}
    \widehat{\theta_{s}^{(m)}}(\xi) & = \int_{\mathbb{Q}_{p}} \chi(x,s)\phi(x) \chi(x, \xi) dx \\
    & = \int_{\mathbb{Q}_{p}} \phi(x)\chi(x, \xi+s) dx \\
    & = \widehat{\phi}(\xi +s).
   \end{aligned}
$$
Hence $ D^{\mu}\widehat{\theta_{s}^{(m)}}(0) = D^{\mu}\widehat{\phi}(s) $. \par 
From theorem \ref{th1.4}, $ \phi $\, has $ k $\, approximation order if and only if $ D^{\mu}\widehat{\phi}(\alpha) = 0 $\, for all $ \alpha \in I_{p} \setminus \{ 0 \}, \, \lvert \mu \rvert \leq k-1 $. Here $ I_{p} \setminus \{ 0 \} = \bigcup_{m \geq 1} J_{p,m} $. \par 
We have $ \{ \theta_{s}^{(m)} \}_{s \in J_{p,m}} $\, has $ k $\, $ p $-vanishing moments if and only if $ D^{\mu}\widehat{\theta_{s}^{(m)}}(0) = 0 $\, for all $ s \in J_{p,m}, \, 0 \leq \mu \leq k-1 $. That is, if and only if $ D^{\mu}\widehat{\phi}(s) = 0 $\, for all $ s \in J_{p,m}, \, 0 \leq \mu \leq k-1 $. \par 
Since $ m $\, is fixed, we can conclude that if $ \phi $\, has approximation order $ k $, then $ \theta_{s}^{(m)} $\, has $ k $\, $ p $-vanishing moments. 
 \end{proof}

 \begin{theorem}\label{th4.5.4}
   Let $ \phi = 1_{B_{0}(0)} $\, and $ \psi_{s}^{(m),\nu} $\, be the wavelet functions defined by \eqref{eq4.5.4}. If $ \phi $\, has approximation order $ k $, then $ \psi_{s}^{(m),\nu} $\, has $ k $\, $ p $-vanishing moments. 
 \end{theorem}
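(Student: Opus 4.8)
The plan is to verify Definition~\ref{df4.3.1} directly, showing that $\int_{\mathbb{Q}_p}|x|_p^{\mu}\psi_s^{(m),\nu}(x)\,dx=0$ for every $0\le\mu<k$. (Equivalently, by Theorem~\ref{th4.3.1}, that $D^{\mu}\widehat{\psi_s^{(m),\nu}}(0)=0$ for $0\le\mu<k$; the two formulations coincide because $D^{\mu}\widehat f(0)=\int_{\mathbb{Q}_p}|x|_p^{\mu}f(x)\,dx$.) Substituting the expansion \eqref{eq4.5.4} and using linearity of the integral, the $\mu$-th moment becomes
\[
\int_{\mathbb{Q}_p}|x|_p^{\mu}\psi_s^{(m),\nu}(x)\,dx=\sum_{k=0}^{p^{\nu}-1}\alpha_{s,k}\int_{\mathbb{Q}_p}|x|_p^{\mu}\,\theta_s^{(m)}\!\Big(x-\tfrac{k}{p^{\nu}}\Big)\,dx,
\]
so everything reduces to the shifted moments of the single generator $\theta_s^{(m)}=\chi(\cdot,s)\,1_{B_0(0)}$.

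I would then split the sum into the zero-shift term and the nonzero-shift terms, and this split is the heart of the argument. For $k\neq 0$, since $1\le k\le p^{\nu}-1$ one has $|k/p^{\nu}|_p\ge p>1$, whereas $\theta_s^{(m)}(\cdot-k/p^{\nu})$ is supported on $k/p^{\nu}+B_0(0)$, where every point has $p$-adic absolute value at most $1$. By the ultrametric inequality, $|x|_p=|k/p^{\nu}|_p$ is constant on this support, so the weight $|x|_p^{\mu}$ pulls out of the integral and, by translation invariance of the Haar measure, the term collapses to
\[
\alpha_{s,k}\,|k/p^{\nu}|_p^{\mu}\int_{\mathbb{Q}_p}\theta_s^{(m)}(y)\,dy=\alpha_{s,k}\,|k/p^{\nu}|_p^{\mu}\,\widehat{\theta_s^{(m)}}(0)=\alpha_{s,k}\,|k/p^{\nu}|_p^{\mu}\,\widehat{\phi}(s),
\]
using the identity $\widehat{\theta_s^{(m)}}(\xi)=\widehat\phi(\xi+s)$ from Theorem~\ref{th4.5.3}. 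Because $s\in J_{p,m}$ has $|s|_p=p^{m}>1$, the point $s$ lies outside $B_0(0)$, and since $\widehat\phi=\widehat{1_{B_0(0)}}=1_{B_0(0)}$ we get $\widehat\phi(s)=0$. Thus every nonzero-shift term vanishes outright, independently of the approximation order.

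This leaves only the zero-shift term $\alpha_{s,0}\int_{\mathbb{Q}_p}|x|_p^{\mu}\theta_s^{(m)}(x)\,dx=\alpha_{s,0}\,D^{\mu}\widehat{\theta_s^{(m)}}(0)$, i.e. $\alpha_{s,0}$ times the $\mu$-th $p$-moment of $\theta_s^{(m)}$. Here the hypothesis that $\phi$ has approximation order $k$ enters through Theorem~\ref{th4.5.3}, which asserts precisely that $\theta_s^{(m)}$ then has $k$ $p$-vanishing moments (this in turn rests on Theorem~\ref{th1.4}, since $s\in J_{p,m}\subset I_p\setminus\{0\}$). Hence this term also vanishes for every $0\le\mu<k$, and summing gives $\int_{\mathbb{Q}_p}|x|_p^{\mu}\psi_s^{(m),\nu}(x)\,dx=0$ for $0\le\mu<k$, which is the claim.

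The only real obstacle is the nonzero-shift computation, and the $p$-adic geometry is exactly what makes it work: the ultrametric inequality freezes $|x|_p$ on each translated copy of $B_0(0)$, decoupling the polynomial-type weight $|x|_p^{\mu}$ from the integral — a phenomenon with no Euclidean analogue. Once that decoupling is in hand, the two structural facts I rely on, namely $\widehat{\theta_s^{(m)}}(\xi)=\widehat\phi(\xi+s)$ and the moment vanishing of $\theta_s^{(m)}$, are both already supplied by Theorem~\ref{th4.5.3}; no new estimate on the coefficients $\alpha_{s,k}$ is needed. A useful sanity check for the writeup is the case $\mu=0$, where the zero-shift term is also $\alpha_{s,0}\widehat\phi(s)=0$, so the first moment vanishes unconditionally — matching the expectation that these wavelets always carry at least one $p$-vanishing moment.
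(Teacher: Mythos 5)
Your proof is correct, and it takes a genuinely different route from the paper's. The paper works entirely on the Fourier side: it computes $\widehat{\psi_{s}^{(m),\nu}}(\xi)=\sum_{k=0}^{p^{\nu}-1}\alpha_{s,k}\,\chi\left(\frac{k}{p^{\nu}},\xi\right)\widehat{\phi}(\xi+s)$, applies a Leibniz-type product rule for the pseudo-differential operator to get $D^{\mu}\widehat{\psi_{s}^{(m),\nu}}(0)=\sum_{k}\alpha_{s,k}\sum_{\beta\leq\mu}\binom{\mu}{\beta}p^{\nu\beta}D^{\mu-\beta}\widehat{\phi}(s)$, and then kills every term simultaneously via Theorem \ref{th1.4}, since $s\in J_{p,m}\subset I_{p}\setminus\{0\}$. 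You instead work in the spatial domain: you split the moment integral over the translates, use the ultrametric identity $\lvert x\rvert_{p}=\lvert k/p^{\nu}\rvert_{p}$ on the support $k/p^{\nu}+B_{0}(0)$ to pull the weight $\lvert x\rvert_{p}^{\mu}$ out of each nonzero-shift term, and observe that each such term equals $\alpha_{s,k}\lvert k/p^{\nu}\rvert_{p}^{\mu}\,\widehat{\phi}(s)=0$ unconditionally (because $\lvert s\rvert_{p}=p^{m}>1$ and $\widehat{\phi}=1_{B_{0}(0)}$), leaving only the $k=0$ term, which vanishes by Theorem \ref{th4.5.3}. Your route buys two things: it avoids the Leibniz formula for $D^{\mu}$ altogether --- a step the paper invokes without justification, and which is delicate since $D^{\mu}$ is a nonlocal (Vladimirov-type) operator rather than a derivation --- replacing it with an exact elementary computation; and it isolates precisely where the hypothesis enters (only the zero-shift term), making transparent that the nonzero shifts contribute nothing regardless of approximation order and that one $p$-vanishing moment ($\mu=0$) holds unconditionally. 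The paper's approach, in exchange, treats all $k$ uniformly and runs parallel to its own proof of Theorem \ref{th4.5.3}, at the cost of resting on that unproved product rule.
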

 \begin{proof}
  For any fixed $ \nu = 1 , 2 , \ldots $, let $ \psi_{s}^{(m),\nu} $\, be the wavelet functions defined by \eqref{eq4.5.4}, $$ \psi_{s}^{(m),\nu} (x) = \sum_{k=0}^{p^{\nu}-1} \alpha_{s,k} \theta_{s}^{(m)}\left(x-\frac{k}{p^{\nu}}\right), \quad s \in J_{p,m}, $$
with 
 $$ \alpha_{s,k} = p^{-\nu} \sum_{r=0}^{p^{\nu}-1} \gamma_{s,r} e^{-2 \pi i \frac{-s+r}{p^{\nu}} k}, $$
where $ \gamma_{s,k} \in \mathbb{C},\, \lvert \gamma_{s,k} \rvert = 1, \, k = 0,1,\ldots, p^{\nu}-1, \, s \in J_{p,m} $.
That is, $$ \psi_{s}^{(m),\nu} (x) = \sum_{k=0}^{p^{\nu}-1} \alpha_{s,k}  \chi\left(x-\frac{k}{p^{\nu}}, s \right)\phi\left(x-\frac{k}{p^{\nu}}\right), \quad s \in J_{p,m}. $$ Now,
$$ \begin{aligned}
    \widehat{\psi_{s}^{(m),\nu}}(\xi) & = \int_{\mathbb{Q}_{p}} \sum_{k=0}^{p^{\nu}-1} \alpha_{s,k} \chi\left(x-\frac{k}{p^{\nu}}, s \right)\phi\left(x-\frac{k}{p^{\nu}}\right) \chi(x, \xi) dx \\
    & =\sum_{k=0}^{p^{\nu}-1} \alpha_{s,k} \int_{\mathbb{Q}_{p}} \phi(t) \chi(t,s)\chi\left(t+\frac{k}{p^{\nu}}, \xi \right) dt \\
    & =\sum_{k=0}^{p^{\nu}-1} \alpha_{s,k} \chi\left(\frac{k}{p^{\nu}}, \xi \right)\int_{\mathbb{Q}_{p}} \phi(t) \chi(t,s) \chi(t, \xi) dt \\
    & = \sum_{k=0}^{p^{\nu}-1} \alpha_{s,k} \chi\left(\frac{k}{p^{\nu}}, \xi \right)\int_{\mathbb{Q}_{p}} \phi(t) \overline{\chi(t, \xi+s)} dt \\
    & = \sum_{k=0}^{p^{\nu}-1} \alpha_{s,k} \chi\left(\frac{k}{p^{\nu}}, \xi \right)\widehat{\phi}(\xi +s).
   \end{aligned}
$$
Let $ f(\xi)= \chi\left(\frac{k}{p^{\nu}}, \xi \right) $\, and $ g(\xi)= \widehat{\phi}(\xi +s) $. Then applying the Leibniz formula for diﬀerentiation,
$$ \begin{aligned}
    D^{\mu}\widehat{\psi_{s}^{(m),\nu}}(\xi) & = \sum_{k=0}^{p^{\nu}-1} \alpha_{s,k} \sum_{\beta \leq \mu} {\mu \choose \beta} D^{\beta}f(\xi) D^{\mu-\beta}g(\xi) \\
    & = \sum_{k=0}^{p^{\nu}-1} \alpha_{s,k} \sum_{\beta \leq \mu} {\mu \choose \beta} p^{\nu\beta} \chi\left(\frac{k}{p^{\nu}}, \xi \right) D^{\mu-\beta}\widehat{\phi}(\xi+s). 
   \end{aligned}
$$
  That is, $$ D^{\mu}\widehat{\psi_{s}^{(m),\nu}}(0) = \sum_{k=0}^{p^{\nu}-1} \alpha_{s,k} \sum_{\beta \leq \mu} {\mu \choose \beta} p^{\nu\beta} D^{\mu-\beta}\widehat{\phi}(s). $$ 
  From theorem \ref{th1.4}, $ \phi $\, has $ k $\, approximation order if and only if $ D^{\mu}\widehat{\phi}(\alpha) = 0 $\, for all $ \alpha \in I_{p} \setminus \{ 0 \}, \, \lvert \mu \rvert \leq k-1 $. Here $ I_{p} \setminus \{ 0 \} = \bigcup_{m \geq 1} J_{p,m} $. \par 
We have $ \{ \psi_{s}^{(m),\nu} \}_{s \in J_{p,m}} $\, has $ k $\, $ p $-vanishing moments if and only if $ D^{\mu}\widehat{\psi_{s}^{(m),\nu}}(0) = 0 $\, for all $ s \in J_{p,m}, \, 0 \leq \mu \leq k-1 $. That is, if and only if $ D^{\mu-\beta}\widehat{\phi}(s) = 0 $\, for all $ s \in J_{p,m}, \, 0 \leq \mu \leq k-1,\,\beta \leq \mu $. \par 
Since $ m $\, is fixed, we can conclude that if $ \phi $\, has approximation order $ k $, then $ \psi_{s}^{(m),\nu} $\, has $ k $\, $ p $-vanishing moments. 
 \end{proof}
\begin{remark}
 Let $ \phi = 1_{B_{0}(0)} $\, and $ \psi $\, be a non-Haar type wavelet function. If $ \phi $\, has approximation order $ k $, then $ \psi $\, has $ k $\, $ p $-vanishing moments. 
\end{remark}

\section{\texorpdfstring{$ p $}p-vanishing moments of nonorthogonal wavelets}

In \cite{aes3}, the authors developed a method for constructing MRA-based p-adic wavelet systems that form Riesz bases in $ L_{2}(\mathbb{Q}_{p}) $. The explicit construction is as follows:\par 
 For an integer $ K \geq 0 $, we set 
 $$ A_{K} = \{ \frac{ap^{j}-b}{p^{j+1}}, \frac{p^{K+1}-b}{p^{K+1}}:\, j=1,\ldots,K,\, a,b = 1,\ldots, p-1 \},  $$
 $$ B_{K} = \{ 0, \frac{p^{j}-b}{p^{j}}:\, j=1,\ldots,K,\, b = 1,\ldots, p-1 \} $$
 It is easily seen that $ \# A_{K} = (p-1)\# B_{K} $\, and $ \# B_{K} = 1+K(p-1) $\, for all $ K $. Moreover, $ A_{K}, B_{K} \subset I_{p} $\, and $$ A_{K}\cap B_{K} = \emptyset, \quad A_{K} \cup B_{K} = \bigcup_{j=0}^{p-1}\frac{1}{p}(j+B_{K}). $$ 
 Set $ \chi_{p}(\xi)=e^{2\pi i \{\xi\}_{p}} $. Let us deﬁne trigonometric polynomials $ m_{0} = m_{0,K} $\, and $ n_{0} = n_{0,K} $\, of degrees $ (p - 1)(1 + (p - 1)K) $\, and $ 1 + (p - 1)K $, respectively, by
\begin{equation}\label{eq4.6.1}
m_{0,K}(\xi) = \frac{1}{p} \prod_{r \in A_{K}} \left( \chi_{p}(\xi)-\chi_{p}(r) \right), \qquad n_{0,K}(\xi) = \frac{1}{p} \prod_{r \in B_{K}}\left( \chi_{p}(\xi)-\chi_{p}(r) \right).
\end{equation}
One can easily verify that $ m_{0,K}(0) = 1 $\, for all $ K \geq 0 $. \par 
Given integers $ M \geq 0 $\, and $ N \geq 0 $, we deﬁne the Fourier transform of functions $ \phi = \phi_{M,N} $\, and $ \psi = \psi_{M,N} $\, by
\begin{equation}\label{eq4.6.2}
\widehat{\phi}_{M,N}(\xi) = \prod_{j=0}^{\infty} m_{0,K}(\frac{\xi}{p^{N-j}}), \qquad \xi \in \mathbb{Q}_{p},
\end{equation}
\begin{equation}\label{eq4.6.3}
\widehat{\psi}_{M,N}(\xi) = n_{0,K}(\frac{\xi}{p^{N}}) \widehat{\phi}_{M,N}(p\xi), \qquad \xi \in \mathbb{Q}_{p}.
\end{equation}
where $ K=M+N $.
\begin{lemma}\label{lm4.6.1}\cite{aes3}
For all $ M $\, and $ N $, the following statements hold:
\begin{enumerate}
\item $ \widehat{\phi}_{M,N} \in \mathcal{D}_{M}^{N} $\, and $ \widehat{\psi}_{M,N} \in \mathcal{D}_{M+1}^{N} $,
\item $ \widehat{\phi}_{M,N}(\frac{l}{p^{M}}) \neq 0 $\, if and only if $ l \in p^{K}B_{K},\, 0 \leq l \leq p^{K}-1 $, 
\item $ \widehat{\psi}_{M,N}(\frac{l}{p^{M+1}}) \neq 0 $\, if and only if $ l \in p^{K+1}A_{K},\, 0 \leq l \leq p^{K+1}-1 $.
\end{enumerate}
\end{lemma}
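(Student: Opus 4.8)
The plan is to exploit the elementary fact that $\chi_p(\xi) = \chi_p(r)$ holds precisely when $\xi - r \in B_0(0)$, since $\{\xi\}_p = \{r\}_p$ forces $\xi - r \in B_0(0)$. Consequently the trigonometric polynomials in \eqref{eq4.6.1} have transparent zero sets: $m_{0,K}(\xi) = 0$ exactly when $\xi \in A_K + B_0(0)$ and $n_{0,K}(\xi) = 0$ exactly when $\xi \in B_K + B_0(0)$, while both polynomials are $B_0(0)$-periodic and identically $1$ on $B_0(0)$ (because $m_{0,K}(0) = 1$, $n_{0,K}(0)=1$, and $\chi_p$ is constant on cosets of $B_0(0)$). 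First I would record these observations together with the one-step refinement identity $\widehat{\phi}_{M,N}(\xi) = m_{0,K}(\xi/p^N)\,\widehat{\phi}_{M,N}(\xi/p)$, obtained by splitting off the $j=0$ factor of \eqref{eq4.6.2}, and note that the product \eqref{eq4.6.2} is locally finite: for fixed $\xi$ every factor with $j$ large has argument in $B_0(0)$ and hence equals $1$, so the product is well defined.

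For statement (1) I would first treat periodicity. In \eqref{eq4.6.2} the factor indexed by $j$ is invariant under $\xi \mapsto \xi + h$ whenever $h p^{j-N} \in B_0(0)$, i.e. $|h|_p \le p^{j-N}$; the binding constraint is $j = 0$, so $\widehat{\phi}_{M,N}$ is invariant under $B_{-N}(0) = p^N\mathbb{Z}_p$, that is, it is $p^N$-periodic. The same splitting for \eqref{eq4.6.3}, together with the extra $B_0(0)$-periodic factor $n_{0,K}(\xi/p^N)$, gives $p^N$-periodicity of $\widehat{\psi}_{M,N}$. For the support claims I would show $\widehat{\phi}_{M,N}(\xi) = 0$ once $|\xi|_p \ge p^{M+1}$ by producing, for such $\xi$, an index $j$ with $\xi p^{j-N} \in A_K + B_0(0)$, so that the corresponding factor vanishes; the existence of such $j$ is exactly where the explicit description of $A_K$ and the covering identity $A_K \cup B_K = \bigcup_{j=0}^{p-1}\tfrac1p(j + B_K)$ enter. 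Since $\widehat{\psi}_{M,N}$ is the product of a $B_0(0)$-periodic factor with $\widehat{\phi}_{M,N}(p\,\cdot)$, which is supported on $B_{M+1}(0)$, its support claim follows, giving $\widehat{\phi}_{M,N} \in \mathcal{D}_M^N$ and $\widehat{\psi}_{M,N} \in \mathcal{D}_{M+1}^N$.

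The substance is in (2) and (3), which I would reduce to finite-product evaluations. Writing $K = M+N$ and substituting $\xi = l/p^M$ into \eqref{eq4.6.2} collapses the product to $\widehat{\phi}_{M,N}(l/p^M) = \prod_{i=1}^{K} m_{0,K}(l/p^i)$, so this value is nonzero iff none of the fractional parts $\{l/p^i\}_p = (l \bmod p^i)/p^i$, $1 \le i \le K$, equals a fractional part $\{r\}_p$ with $r \in A_K$. I would then compute the fractional parts of the two families comprising $A_K$ and match them, against the base-$p$ digits of $l$, with the nonzero set $p^K B_K$; the self-similarity identity lets this be organised as an induction peeling off one refinement level at a time. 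Statement (3) is handled the same way after the identity $\widehat{\psi}_{M,N}(l/p^{M+1}) = n_{0,K}(l/p^{K+1})\,\widehat{\phi}_{M,N}(l/p^M)$, where the $p^N$-periodicity from (1) reduces the $\widehat{\phi}$-factor to its value at $l \bmod p^K$; nonvanishing of both factors then pins $l$ down to $p^{K+1}A_K$.

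The hard part will be the combinatorial bookkeeping in (2)--(3): verifying that the forbidden fractional parts contributed by $A_K$ (resp. $B_K$) are hit at exactly the levels $i$ dictated by the digits of $l$, so that avoiding all of them is equivalent to $l \in p^K B_K$ (resp. the two-factor condition is equivalent to $l \in p^{K+1}A_K$). I expect the cardinality identities $\#A_K = (p-1)\#B_K$ and $\#B_K = 1 + K(p-1)$ to serve as a consistency check that the nonvanishing sets have the predicted sizes.
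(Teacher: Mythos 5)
First, a point of reference: the paper itself offers no proof of this lemma --- it is imported verbatim from \cite{aes3} --- so your attempt can only be judged against the definitions. Much of your preparation is sound: the equivalence $\chi_{p}(\xi)=\chi_{p}(r)\Leftrightarrow \xi-r\in B_{0}(0)$, the resulting description of the zero sets of $m_{0,K}$ and $n_{0,K}$, the local finiteness of the product \eqref{eq4.6.2}, the collapse of $\widehat{\phi}_{M,N}(l/p^{M})$ to the finite product $\prod_{i=1}^{K}m_{0,K}(l/p^{i})$, the identity $\widehat{\psi}_{M,N}(l/p^{M+1})=n_{0,K}(l/p^{K+1})\,\widehat{\phi}_{M,N}(l/p^{M})$, and the periodicity argument are all correct. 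However, your opening paragraph contains a genuine error: $n_{0,K}(0)\neq 1$. Since $0\in B_{K}$ by definition, the factor $\chi_{p}(\xi)-\chi_{p}(0)$ annihilates $n_{0,K}$ at $\xi=0$, so $n_{0,K}(0)=0$ and in fact $n_{0,K}\equiv 0$ on $B_{0}(0)$ --- which is also forced by your own (correct) statement that its zero set is $B_{K}+B_{0}(0)\supset B_{0}(0)$. Your text thus asserts two mutually contradictory facts; only $m_{0,K}(0)=1$ holds (and this is what the paper claims), while the vanishing $n_{0,K}(0)=0$ is exactly what makes $\widehat{\psi}_{M,N}(0)=0$, as any wavelet requires.

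More seriously, the actual content of the lemma is never established. Parts (2) and (3) are precise equivalences between nonvanishing of those finite products and the membership $l\in p^{K}B_{K}$, respectively $l\in p^{K+1}A_{K}$; you reduce them (correctly) to a statement about the base-$p$ digits of $l$ avoiding the fractional parts of elements of $A_{K}$ at every scale $i=1,\ldots,K$, and then explicitly defer this as ``combinatorial bookkeeping.'' The same happens with the support half of part (1), where the index $j$ with $\xi p^{j-N}\in A_{K}+B_{0}(0)$ is promised but never produced. But this digit-level matching --- comparing the expansions of $(ap^{j}-b)/p^{j+1}$ and $(p^{K+1}-b)/p^{K+1}$ in $A_{K}$, and of $(p^{j}-b)/p^{j}$ in $B_{K}$, against the digits of $l$ --- \emph{is} the lemma; everything you have written is routine preparation around it. The cardinality identities $\#A_{K}=(p-1)\#B_{K}$ and $\#B_{K}=1+K(p-1)$ can serve as a sanity check but cannot substitute for the equivalence. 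As it stands, your proposal is a credible plan with one internal contradiction, not a proof.
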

Set \begin{equation}\label{eq4.6.4}
 \psi^{(\nu)}(x) = \psi(x - \nu + 1),\, \nu = 1,\ldots , p - 1. 
 \end{equation}
\begin{theorem}\label{th4.6.1}\cite{aes3}
For integers $ M, N \geq 0 $, the function $ \phi_{M,N} $\, deﬁned by its Fourier transform \eqref{eq4.6.2} generates an MRA if and only if $ M \leq \frac{p^{N}-1}{p-1}-N $. Moreover, in this case, $ \psi^{(\nu)}_{M,N},\, \nu = 1,\ldots ,p-1 $\, is a set of wavelet functions, and the corresponding wavelet system $ \{ \psi^{(\nu)}_{ja}(x) := p^{j/2} \psi^{(\nu)}(p^{-j}x - a) : \nu = 1,\ldots , r,\, a \in I_{p},\, j \in \mathbb{Z} $\, forms a Riesz basis for $ L_{2}(\mathbb{Q}_{p}) $\, if and only if $ M = \frac{p^{N}-1}{p-1}-N $. 
\end{theorem}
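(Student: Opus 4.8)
The plan is to reduce the MRA claim to the characterization of Theorem \ref{th1.1} and then read both cardinality thresholds off Lemma \ref{lm4.6.1}, leaving the stability (Riesz) part as the genuinely analytic step.

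First I would verify the hypotheses of Theorem \ref{th1.1} for $\phi_{M,N}$. Since $m_{0,K}(0)=1$, the infinite product \eqref{eq4.6.2} gives $\widehat{\phi}_{M,N}(0)=1\neq 0$, and Lemma \ref{lm4.6.1}(1) places $\widehat{\phi}_{M,N}\in\mathcal{D}_M^N$. Refinability is obtained by telescoping \eqref{eq4.6.2}: separating the $j=0$ factor yields $\widehat{\phi}_{M,N}(\xi)=m_{0,K}(p^{-N}\xi)\,\widehat{\phi}_{M,N}(p\xi)$, which is a relation of the form \eqref{eq1.8} with mask $H(\xi)=m_{0,K}(p^{-N-1}\xi)$. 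Thus the refinability requirement of Theorem \ref{th1.1} holds automatically, and generating an MRA is equivalent to the cardinality condition of that theorem, which in the present normalization reads: $\widehat{\phi}_{M,N}$ is nonzero at no more than $p^{N}$ points of its natural grid.

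Next I would count those points. By Lemma \ref{lm4.6.1}(2) the values $\widehat{\phi}_{M,N}(l/p^M)$ are nonzero precisely for $l\in p^{K}B_K$, so the number of nonvanishing grid values equals $\#B_K=1+K(p-1)=1+(M+N)(p-1)$. Hence $\phi_{M,N}$ generates an MRA if and only if $1+(M+N)(p-1)\le p^{N}$, which rearranges to $M\le\frac{p^{N}-1}{p-1}-N$; this is the first assertion. Granting the MRA, I would then confirm that the $\psi^{(\nu)}_{M,N}$ are wavelet functions by checking that the $I_p$-shifts of $\phi_{M,N}$ together with those of the $\psi^{(\nu)}_{M,N}$ span $V_1$: using \eqref{eq4.6.3} and Lemma \ref{lm4.6.1}(3), the support of $\widehat{\psi}_{M,N}$ lies at the $\#A_K=(p-1)\#B_K$ points $p^{K+1}A_K$, and one verifies that the polyphase (modulation) matrix assembled from $m_{0,K}$, $n_{0,K}$ and their $p$-adic translates has full rank on the nonvanishing grid of $\widehat{\phi}_{M,N}(p\,\cdot)$.

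For the Riesz basis statement I would use the standard stability principle: the wavelet system is a Riesz basis if and only if the generating shifts $\{\phi_{M,N}(\cdot-a):a\in I_p\}$ form a Riesz sequence, i.e. the bracket product (Gramian) of $\widehat{\phi}_{M,N}$ over $I_p$ is bounded above and below. Since $\widehat{\phi}_{M,N}$ is $p^{N}$-periodic and supported on $B_M(0)$, this Gramian is a finite matrix whose rank is exactly the number $\#B_K$ of nonvanishing grid values; it is bounded below (invertible) if and only if that number attains the maximal admissible value $p^{N}$, that is $\#B_K=p^{N}$, which is precisely $M=\frac{p^{N}-1}{p-1}-N$. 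When the inequality is strict, $\#B_K<p^{N}$, the Gramian degenerates, the lower Riesz bound fails, and the complete system is only a redundant frame. The main obstacle is exactly this last equivalence: the MRA part is purely combinatorial counting, whereas showing that strict inequality forces the lower Riesz bound to vanish—and that equality yields genuine two-sided bounds—requires the quantitative analysis of the Gramian of $\widehat{\phi}_{M,N}$ together with the completeness of the $p-1$ generators $\psi^{(\nu)}_{M,N}$ under the dilations.
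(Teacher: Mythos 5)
First, a point of reference: the paper does not prove this statement at all --- Theorem \ref{th4.6.1} is imported verbatim from \cite{aes3} --- so your proposal has to be judged on its own merits rather than against an in-paper argument. Your first half (the MRA criterion) is correct and is the route one would expect: telescoping \eqref{eq4.6.2} gives $\widehat{\phi}_{M,N}(\xi)=m_{0,K}(p^{-N}\xi)\,\widehat{\phi}_{M,N}(p\xi)$, i.e.\ a mask $H(\xi)=m_{0,K}(p^{-N-1}\xi)$, which under the stated bound has degree $\#A_K=(p-1)(1+(p-1)K)\le (p-1)p^{N}<p^{N+1}$ and so is a legitimate mask of the form \eqref{eq1.9}; combined with $\widehat{\phi}_{M,N}(0)=1$, Lemma \ref{lm4.6.1}, and the count $\#B_K=1+(M+N)(p-1)\le p^{N}\Leftrightarrow M\le\frac{p^{N}-1}{p-1}-N$, Theorem \ref{th1.1} yields the first equivalence. (Two caveats: Theorem \ref{th1.1} is stated in this paper for MRAs whose axiom (5) demands \emph{orthonormal} shifts, whereas $\phi_{M,N}$ is nonorthogonal, so strictly one needs the weakened axiom used in \cite{aes3}; and your verification that the $\psi^{(\nu)}_{M,N}$ are wavelet functions is only a sketch, not a proof.)

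The Riesz half has a genuine gap, in two places. First, the ``standard stability principle'' you invoke --- that $\{p^{j/2}\psi^{(\nu)}(p^{-j}\cdot-a)\}$ is a Riesz basis of $L_2(\mathbb{Q}_p)$ if and only if the shifts $\{\phi_{M,N}(\cdot-a):a\in I_p\}$ form a Riesz sequence --- is not a standard theorem; even on $\mathbb{R}$, passing between stability of the scaling shifts and a Riesz basis of all of $L_2$ for a \emph{nonorthogonal} MRA requires uniform control of the non-orthogonal complements $W_j$ (e.g.\ an explicit dual system), and you concede this step rather than prove it, although it is the heart of the theorem. Second, the criterion you then apply is false as literally stated: you identify the Gramian with the bracket product $\sum_{a\in I_p}|\widehat{\phi}(\xi+a)|^{2}$. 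Over $I_p$ this identification breaks down because $I_p$ is not a group ($\chi(a,\xi+b)=\chi(a,b)\chi(a,\xi)$ carries a nontrivial twist), and in fact the bracket product vanishes on a set of positive measure whenever $N\ge 2$. Concretely, for $p=2$, $N=2$, $M=1$ (the Riesz case, $K=3$), Lemma \ref{lm4.6.1}(2) gives $\mathrm{supp}\,\widehat{\phi}_{1,2}=\{0,2,3,7/2\}+B_{-2}(0)$, and no $I_p$-translate of the coset $1+B_{-2}(0)$ meets this support, so the bracket product is identically zero there --- yet the theorem asserts that this very system \emph{is} a Riesz basis; so a lower bound on the bracket product cannot be the criterion. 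What is true, and what your rank claim silently requires, is a statement about the genuine Gram matrix $(\langle\phi(\cdot-a),\phi(\cdot-a')\rangle)_{a,a'\in I_p}$: its entries vanish unless $|a-a'|_p\le p^{N}$, it is block diagonal with identical $p^{N}\times p^{N}$ blocks $G=p^{-N}\sum_{c}|\widehat{\phi}(c)|^{2}u_c u_c^{\ast}$, $u_c=\left(\chi(kp^{-N},c)\right)_{k=0}^{p^{N}-1}$, where $c$ runs over the $\#B_K$ support cosets; and since $u_c=(e^{2\pi i rk})_k$ with pairwise distinct nodes $r\in B_K$, this is a Vandermonde system, whence $\mathrm{rank}\,G=\#B_K$ and $G$ is invertible iff $\#B_K=p^{N}$, i.e.\ iff $M=\frac{p^{N}-1}{p-1}-N$. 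You assert this conclusion but supply neither the block diagonalization nor the Vandermonde independence; without them, and without the transfer principle above, the second half of the theorem remains unproved.
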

It is easy to see that if $ (M, N ) = (0, 0) $\, or $ (M, N ) = (0, 1) $, then $ \phi_{M,N}(x) = \Omega(\lvert x \rvert_{p}) $\, and we obtain the Haar MRA. On the contrary, for $ N > 1 $\, and $ M = \frac{p^{N}-1}{p-1} - N $, the functions $ \phi_{M,N} $\, generate pairwise distinct MRAs and each of these scaling functions is not orthogonal, which leads to nonorthogonal wavelet Riesz bases. 
\begin{example}
Let $ p=2, \, N=2 $\, and $ M=1 $, then $ K =3 $ and $$ A_{3} = \{\frac{1}{4},\frac{3}{8},\frac{7}{16}, \frac{15}{16} \} \text{   and   } B_{3} = \{0,\frac{1}{2},\frac{3}{4}, \frac{7}{8} \}. $$
Then $$ m_{0,K}(\xi) = \frac{1}{p}\left( \chi_{p}(\xi)-\chi_{p}(\frac{1}{4}) \right) \left( \chi_{p}(\xi)-\chi_{p}(\frac{3}{8}) \right) \left( \chi_{p}(\xi)-\chi_{p}(\frac{7}{16}) \right)\left( \chi_{p}(\xi)-\chi_{p}(\frac{15}{16}) \right), $$ and 
$$ n_{0,K}(\xi) = \frac{1}{p}\left( \chi_{p}(\xi)-1 \right) \left( \chi_{p}(\xi)-\chi_{p}(\frac{1}{2}) \right) \left( \chi_{p}(\xi)-\chi_{p}(\frac{3}{4}) \right)\left( \chi_{p}(\xi)-\chi_{p}(\frac{7}{8}) \right). $$
Then the refinable function is given by \eqref{eq4.6.2} and the corresponding wavelets function is $ \psi^{(\nu)}_{M,N} $\, where $ \psi_{M,N} $\, is given by \eqref{eq4.6.3}.
\end{example}
The following theorem characterizes the $ p $-vanishing moment of nonorthogonal MRA wavelets.
\begin{theorem}\label{th4.6.2}
 Let $ \psi^{(\nu)}_{M,N} $\, be the wavelet functions defined by \eqref{eq4.6.4}. Then $ \psi^{(\nu)}_{M,N} $\, has $ k $\, $ p $-vanishing moments if and only if $ D^{\mu}n_{0,K}(0) = 0, \, \forall 0 \leq \mu \leq k-1 $.
\end{theorem}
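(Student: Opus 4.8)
The plan is to...

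The plan is to translate the statement into Fourier/pseudodifferential language via Theorem \ref{th4.3.1}, which tells us that $\psi^{(\nu)}_{M,N}$ has $k$ $p$-vanishing moments if and only if $D^{\mu}\widehat{\psi^{(\nu)}_{M,N}}(0)=0$ for all $0\le\mu<k$. The whole problem then reduces to computing $D^{\mu}\widehat{\psi^{(\nu)}_{M,N}}(0)$ and matching it against the quantities $D^{\mu}n_{0,K}(0)$. First I would compute the Fourier transform of $\psi^{(\nu)}_{M,N}$. Since $\psi^{(\nu)}(x)=\psi(x-(\nu-1))$ by \eqref{eq4.6.4} is a pure translate, and the additive character satisfies $\chi(a+b,\xi)=\chi(a,\xi)\chi(b,\xi)$, a change of variable gives $\widehat{\psi^{(\nu)}_{M,N}}(\xi)=\chi(\nu-1,\xi)\,\widehat{\psi}_{M,N}(\xi)$. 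Combining with \eqref{eq4.6.3} yields
\[
\widehat{\psi^{(\nu)}_{M,N}}(\xi)=\chi(\nu-1,\xi)\,n_{0,K}\!\left(\tfrac{\xi}{p^{N}}\right)\widehat{\phi}_{M,N}(p\xi).
\]
I would record two elementary facts at $\xi=0$: from \eqref{eq4.6.2} together with $m_{0,K}(0)=1$ one has $\widehat{\phi}_{M,N}(0)=\prod_{j}m_{0,K}(0)=1$, and since $0\in B_{K}$ the factor $\chi_{p}(\xi)-\chi_{p}(0)=\chi_{p}(\xi)-1$ appearing in \eqref{eq4.6.1} forces $n_{0,K}(0)=0$.

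Next I would differentiate the product. Using the eigenvalue identity $D^{\alpha}\chi(a,\cdot)=|a|_{p}^{\alpha}\chi(a,\cdot)$ one obtains the scaling relations $D^{\gamma}\!\left[n_{0,K}(\cdot/p^{N})\right](0)=p^{N\gamma}(D^{\gamma}n_{0,K})(0)$ and $D^{\delta}\!\left[\widehat{\phi}_{M,N}(p\cdot)\right](0)=p^{-\delta}(D^{\delta}\widehat{\phi}_{M,N})(0)$, exactly in the spirit of the computation in the proof of Theorem \ref{th4.5.4}. Applying the Leibniz formula to the three factors and evaluating at $0$ then expresses $D^{\mu}\widehat{\psi^{(\nu)}_{M,N}}(0)$ as a finite sum over $\beta+\gamma+\delta=\mu$ whose general term carries the factor $(D^{\gamma}n_{0,K})(0)$. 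The crucial structural observation is that this is a triangular relation
\[
D^{\mu}\widehat{\psi^{(\nu)}_{M,N}}(0)=\sum_{\gamma=0}^{\mu}c_{\mu,\gamma}\,(D^{\gamma}n_{0,K})(0),\qquad c_{\mu,\mu}=p^{N\mu}\,\widehat{\phi}_{M,N}(0)=p^{N\mu}\neq 0,
\]
because the only summand with $\gamma=\mu$ is the one with $\beta=\delta=0$, while every remaining summand has $\gamma<\mu$.

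Finally I would read off both implications from this triangular system. For the forward direction, if $D^{\gamma}n_{0,K}(0)=0$ for all $0\le\gamma\le k-1$, then each summand with $\mu\le k-1$ vanishes, so $D^{\mu}\widehat{\psi^{(\nu)}_{M,N}}(0)=0$ and $\psi^{(\nu)}_{M,N}$ has $k$ $p$-vanishing moments. For the converse I would argue by induction on $\mu$: assuming $D^{\gamma}n_{0,K}(0)=0$ for all $\gamma<\mu$, the hypothesis $D^{\mu}\widehat{\psi^{(\nu)}_{M,N}}(0)=0$ collapses the triangular sum to its diagonal term $p^{N\mu}(D^{\mu}n_{0,K})(0)$, forcing $D^{\mu}n_{0,K}(0)=0$. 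A pleasant feature is that this argument never requires the values $D^{\delta}\widehat{\phi}_{M,N}(0)$ for $\delta>0$; only $\widehat{\phi}_{M,N}(0)=1$ is used, which is precisely what makes the diagonal coefficient nonzero.

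I expect the main obstacle to be the rigorous justification of the Leibniz product rule and of the two scaling identities for the pseudodifferential operator $D^{\alpha}$ evaluated at $0$, since $D^{\alpha}$ is defined through \eqref{eq1.2} rather than as a genuine derivative. Once these are established (as they are implicitly in the proof of Theorem \ref{th4.5.4}), the remaining triangular bookkeeping is routine, and the equivalence follows from the invertibility of a lower-triangular system with nonzero diagonal.
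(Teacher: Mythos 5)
Your proposal is correct and takes essentially the same route as the paper's proof: the Fourier transform of the translate contributes the factor $\chi(\nu-1,\xi)$, the factorization $\widehat{\psi}_{M,N}(\xi)=n_{0,K}\left(\frac{\xi}{p^{N}}\right)\widehat{\phi}_{M,N}(p\xi)$ is differentiated by the Leibniz formula, and the conclusion rests on $\widehat{\phi}_{M,N}(0)\neq 0$ (Lemma \ref{lm4.6.1}). Your explicit triangular-system and induction argument simply fills in the final equivalence that the paper compresses into ``thus we can see that,'' so your write-up is, if anything, more complete than the original.
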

\begin{proof}
We have $ \psi^{(\nu)}(x) = \psi(x - \nu + 1) $. Then the Fourier transform of $ \psi^{(\nu)}_{M,N} $\, is given by
$$ \begin{aligned}
 \widehat{\psi^{(\nu)}}_{M,n}(\xi) & = \int_{\mathbb{Q}_{p}} \psi^{(\nu)}_{M,N}(x) \chi(x, \xi) dx \\
    & = \int_{\mathbb{Q}_{p}} \psi_{M,N}(x-\nu+1)\chi(x, \xi) dx \\
    & = \int_{\mathbb{Q}_{p}} \psi_{M,N}(t)\chi(t+\nu-1, \xi) dx \\
    & = \chi(\nu-1,\xi) \int_{\mathbb{Q}_{p}} \psi_{M,N}(t)\chi(t, \xi) dx \\
    & = \chi(\nu-1,\xi) \widehat{\psi}_{M,N}(\xi).
   \end{aligned} $$
 Then applying the Leibniz formula for diﬀerentiation,
$$ D^{\mu}\widehat{\psi}^{(\nu)}_{M,N}(\xi) = \sum_{\beta \leq \mu} {\mu \choose \beta} D^{\beta}\chi(\nu-1,\xi) D^{\mu-\beta}\widehat{\psi}_{M,N}(\xi). $$
Also $ \widehat{\psi}_{M,N}(\xi) = n_{0,K}(\frac{\xi}{p^{N}}) \widehat{\phi}_{M,N}(p\xi) $. By the Leibniz formula for diﬀerentiation,
$$ D^{\mu}\widehat{\psi}_{M,N}(\xi) = \sum_{\beta \leq \mu} {\mu \choose \beta} D^{\beta}n_{0,K}(\frac{\xi}{p^{N}})D^{\mu-\beta}\widehat{\phi}_{M,N}(\xi).  $$ By Lemma \ref{lm4.6.1} we have, $ \phi_{M,N}(0) \neq 0 $.
Thus we can see that $ \psi^{(\nu)}_{M,N} $\, has $ k $\, $ p $-vanishing moments if and only if $ D^{\mu}n_{0,K}(0) = 0, \, \forall 0 \leq \mu \leq k-1 $.
\end{proof}

\section{Conclusion}
The definitions of $ p $-vanishing moments of compactly supported functions on $ \mathbb{Q}_{p} $\, and discrete $ p $-vanishing moments of finitely supported sequences on $ I_{p} $\, are given in this work. The relationship between the $ p $-vanishing moments and discrete $ p $-vanishing moments are established. The $ p $-vanishing moments of Haar-type and non-Haar type wavelet functions are calculated. We proved the connection between $ p $-vanishing moment of non-Haar type wavelet functions and the approximation order of the indicator function of the compact open subgroup $ B_{0}(0) $\, of $ \mathbb{Q}_{p} $. Finally, we characterized the $ p $-vanishing moments of nonorthogonal wavelets.

\section*{Acknowledgement}
We are very grateful to the authors of the articles in the references.

\bibliographystyle{IEEEtran}
\bibliography{article}

\end{document}